\providecommand{\keywords}[1]{\textbf{\textit{Keywords: }} #1}
\newcommand{\ba}{\begin{eqnarray}}
\newcommand{\ea}{\end{eqnarray}}
\newcommand{\cadlag}{{c\`adl\`ag }}
\newcommand*\tcircle[1]{%
 \raisebox{-0.5pt}{%
  \textcircled{\fontsize{7pt}{0}\fontfamily{phv}\selectfont #1}%
 }%
}
  \newtheorem{theorem}{Theorem}[section] 
  \newtheorem{lemma}[theorem]{Lemma}
  \newtheorem{corollary}[theorem]{Corollary}
  \newtheorem{proposition}[theorem]{Proposition}
  \theoremstyle{definition}
  \newtheorem{definition}[theorem]{Definition}
\theoremstyle{remark}
\newtheorem{remark}[theorem]{Remark}
\newtheorem{theorem*}{Theorem}
\newtheorem{lemma*}[theorem]{Lemma}
\newtheorem{corollary*}[theorem]{Corollary}
\newtheorem{proposition*}[theorem]{Proposition}
\newtheorem{problem*}[theorem]{Problem}
\newtheorem{conjecture*}[theorem]{Conjecture}
\newenvironment{example}
 {\pushQED{\qed}\examplex}
 {\popQED\endexamplex}
\begin{document}
\title{Quadratic variation along refining partitions: \\Constructions and Examples}
\author{Rama CONT and Purba DAS\footnote{Email: das@maths.ox.ac.uk}
}

\date{Mathematical Institute, University of Oxford} 
\maketitle

\begin{abstract}
We present several constructions of paths and processes with finite quadratic variation along a  refining sequence of partitions, extending previous constructions to the non-uniform case. We study in particular the dependence of  quadratic variation with respect to the sequence of partitions for these constructions. We identify a class of paths whose quadratic variation along a partition sequence is invariant under {\it coarsening}. This class is shown to include typical sample paths of Brownian motion, but also  paths which are  $\frac{1}{2}$-H\"older continuous. Finally, we show how to extend these constructions to higher dimensions.
\end{abstract}
\keywords{Quadratic variation,  refining partitions, Schauder system, quadratic roughness, Brownian motion.}
\tableofcontents

\section{ Introduction  }

The concept of quadratic variation of a path along a sequence of partitions, introduced by F\"ollmer \cite{follmer1981},  plays an important role in pathwise Ito calculus \cite{ananova2017,follmer1981,davis2018} and its extensions to path-dependent functionals \cite{CF2010,chiu2020}. 
Examples of functions with  (non-zero) finite quadratic variation are given by typical sample paths of Brownian motion and semi-martingales,  but explicit constructions of such functions have also been  given  by Gantert \cite{gantert1994}, Schied \cite{schied2016} and Mishura and Schied \cite{schied2016b}, in the spirit of Takagi's construction \cite{takagi1901}.
These constructions are based on a Faber-Schauder representation associated with a dyadic sequence of partitions and exploit certain identities which result  from the dyadic nature of the construction.

It is well known that for semimartingales and, more generally, Dirichlet processes \cite{follmer1981b}, quadratic variation, defined as a limit in probability, is invariant with respect to the choice of the partition sequence as long as it has vanishing step size. Almost-sure convergence results for quadratic variation have been obtained for specific classes of processes either under conditions on mesh size (see e.g. \cite{dudley1973,dudley2011})  or for {\it refining partitions} without any conditions on the mesh size \cite{levy1940}. These results do not assume any specific partition sequence and allow for non-uniform partitions.
On the other hand, it is well known \cite{das2020,davis2018,schied2016} that the quadratic variation of a function along a sequence of partitions is not invariant with respect to the choice of this sequence. Conditions for such an invariance to hold have been studied in \cite{das2020} but some of the aforementioned constructions, based on the dyadic partition,  do not fulfil these conditions. The question therefore arises whether such constructions may be carried out for non-dyadic and, more generally, non-uniform partitions sequences and whether the quadratic variation of the resulting functions is invariant with respect to the partition sequence.

We investigate these questions by providing several constructions of paths and processes with finite quadratic variation along  refining sequences of partitions, extending previous constructions to the case of non-uniform partitions. The construction relies on a Schauder basis representation associated with the partition sequence.
We study in particular the dependence of  quadratic variation with respect to the sequence of partitions for these constructions. We identify a class of paths whose quadratic variation along a partition sequence is invariant under {\it coarsening} of the partition sequence. This class is shown to include typical sample paths of Brownian motion, but also  paths which are  $\frac{1}{2}$-H\"older continuous. Finally, we show how to extend these constructions to higher dimensions.

\paragraph{Outline}
Section \ref{sec:definitions} recalls the definition of quadratic variation along a sequence of partitions, following \cite{chiu2018,follmer1981}. 
In Section \ref{sec2}, we construct a Haar basis and Schauder system associated with an arbitrary (finitely) refining partition sequence and recall some properties of the Schauder representation of continuous functions  (Proposition \ref{coeff_hat_func}). Section \ref{sec3} extends the results of Gantert \cite{gantert1994} to the case of a finitely refining (non-uniform) partition sequence and presents some explicit calculations and pathwise estimates. 
In Section \ref{Extended.BM.Schied}, we construct a class of processes with a {\it prescribed} quadratic variation along  an arbitrary finitely refining partition $\pi$ of $[0,1]$, extending the construction \cite{schied2016}  beyond the dyadic case. 
Section  \ref{sec5} discusses the dependence of quadratic variation with respect to the partition sequence. Theorem \ref{main.theorem} provides an example of a class of continuous processes with finite quadratic variation along a finitely refining partition $\pi$ whose quadratic variation is invariant  under  coarsening  of the partitions (Definition \ref{def.coarsening}). Typical Brownian paths are shown to belong to this class.
Finally,  Section \ref{sec6} discusses extensions of these constructions to higher dimensions.

\section{Quadratic variation along a sequence of partitions}\label{sec:definitions}

Let $T>0$. We denote
$D([0,T],\mathbb{R}^d)$ the space of $\mathbb{R}^d$-valued right-continuous functions with left limits (c\`adl\`ag functions),
 $C^{0}([0,T],\mathbb{R}^d)$ the subspace of continuous functions and,
for $0< \nu < 1,$ 
$C^{\nu}([0,T],\mathbb{R}^d)$ the space of H\"older continuous functions with exponent $\nu$:
$$ C^{\nu}([0,T],\mathbb{R}^d) =\left\{ x\in C^0([0,T],\mathbb{R}^d)\quad \Big|\quad \sup_{(t,s)\in[0,T]^2, t\neq s }\frac{\|x(t)-x(s)\|}{|t-s|^{\nu}}<+\infty  \right\},$$ $${\rm and}\qquad C^{\nu-}([0,T],\mathbb{R}^d) =\mathop{\bigcap}_{0\leq \alpha< \nu}C^{\alpha}([0,T],\mathbb{R}^d). $$ 

We denote by $\Pi([0,T])$; the set of all finite partitions of $[0,T]$.
A sequence of partitions of $[0,T]$ is a sequence $(\pi^n)_{n\geq 1}$ of elements of $\Pi([0,T])$:
$$\pi^n=\left(0=t^{n}_0<t^n_1<\cdots<t^{n}_{N(\pi^n)}=T\right).$$ 
We denote $N(\pi^n)$ the number of intervals in the partition $\pi^n$ and
\ba\underline{\pi^n}=\inf_{i=0,\cdots, N(\pi^n)-1} |t^n_{i+1}-t^n_i|,\qquad |\pi^n|=\sup_{i=0,\cdots, N(\pi^n)-1} |t^n_{i+1}-t^n_i|. \ea the size of the largest (respectively the smallest) interval of $\pi^n$.

\begin{example} Let $k\geq 2$ be an integer. The $k$-adic partition sequence of $[0,T]$ is defined by $$ \pi^n=\bigg(t^n_j= \frac{j \ T}{k^n}, \qquad j=0,\cdots, k^n\bigg).$$We have $ \underline{\pi^n}=|\pi^n|= T/k^{n}.$\end{example}

\begin{example}[Lebesgue partition]Given $x\in D([0,T], \mathbb{R}^d)$ define $$ \lambda^n_0(x)=0,\quad{\rm and}\ \forall k\geq 1; \quad \lambda^n_{k+1}(x)=\inf\{ t\in ( \lambda^n_k(x),T],\quad \|x(t)-x( \lambda^n_k(x))\| \geq 2^{-n} \}$$and $N(\lambda^n(x))=\inf\{k\geq 1, \quad \lambda^n_k(x)=T \}.$We call the sequence $\lambda^n(x)=( \lambda^n_k(x) )$ the (dyadic) Lebesgue partition associated to $x$.\end{example}

\begin{definition}[Quadratic variation of a path along a sequence of partitions]\label{def.pathwiseQV}
Let $\pi^n=\left(0=t^{n}_0<t^n_1<\cdots<t^{n}_{N(\pi^n)}=T\right)$ be a sequence of partitions  of  $[0,T]$ with vanishing mesh
$|\pi^n|\to 0$.
A c\`adl\`ag function $x \in D([0,T],\mathbb{R})$ is said to have finite
quadratic variation along the sequence of partitions
$(\pi^n)_{n\geq 1}$ if 
 the sequence of  measures
 \begin{equation*}
    \sum_{t^n_j \in \pi^n} (x(t^n_{j+1})-x(t^n_j))^2 \delta_{t^n_j}\quad 
   \end{equation*}
where $\delta_{t^n_j}$ denotes a unit point mass at $t^n_j$,  converges weakly on $[0,T]$ to a Radon measure $\mu$ such that
$t\mapsto [x]_\pi^c(t) = \mu([0,t]) - \sum_{0 < s \leq t} |\Delta x(s)|^2$
 is continuous and increasing.
The increasing function $[x]_\pi:[0,T]\to \mathbb{R}_+$ defined by \ba [x]_\pi(t)=\mu([0, t])=\lim_{n\to\infty} \sum_{\pi_n} (x(t^n_{k+1}\wedge t)-x(t^n_k\wedge t))^2  \label{eq:qv}\ea  is  called the {\it quadratic variation of }  $x$ along the sequence of partitions $\pi$. We denote $Q_\pi([0,T] ,\mathbb{R})$ the set of c\`adl\`ag paths with these  properties.
\end{definition}

$Q_\pi([0,T] ,\mathbb{R})$ is not a vector space (see e.g \cite{schied2016}). 
The extension to vector-valued paths requires some care \cite{follmer1981}:
\begin{definition}[Pathwise quadratic variation for a vector valued path]\label{defn.qv.vector}
A c\`adl\`ag path  $x=(x^1,...,x^d)\in D([0,T],\mathbb{R}^d)$ is said to have finite quadratic variation along $\pi=(\pi^n)_{n\geq 1}$ if for all $i,j=1,\cdots,d$ we have
$x^i\in Q_\pi([0,T] ,\mathbb{R})$ and $x^i+x^j\in Q_\pi([0,T] ,\mathbb{R})$. We then denote $[x]_\pi\in D([0,T], S^+_d)$ the matrix-valued function defined by,
$$[x]_\pi^{i,j}(t)=\frac{[x^i+x^j]_\pi(t)-[x^i]_\pi(t)-[x^j]_\pi(t)}{2}$$
where
$S^+_d$ is the set of symmetric semi-definite positive matrices.
We denote by $Q_\pi([0,T] ,\mathbb{R}^d)$ the set of functions satisfying these properties.
\end{definition}
For $x\in Q_\pi([0,T] ,\mathbb{R}^d)$, $[x]_\pi$ is a \cadlag function with values in $S^+_d$: $[x]_\pi\in D([0,T],S^+_d)$.

As shown in \cite{chiu2018}, the above definitions may be more simply expressed in terms of convergence of discrete approximations.
For continuous paths, we have the following characterization \cite{cont2012,chiu2018} for quadratic variation:
\begin{proposition}\label{prop.cont.qv}
$x\in C^0([0,T],\mathbb{R}^d)$ has finite quadratic variation along a partition sequence $\pi=(\pi^n,n\geq 1)$ if and only if the sequence of functions $\left([x]_{\pi^n}, \; n\geq 1\right)$ defined by
$$[x]_{\pi^n}(t):=\sum_{t^n_j\in \pi^n}\left(x(t^n_{j+1}\wedge t)-x(t^n_{j}\wedge t)\right)^t \left(x(t^n_{j+1}\wedge t)-x(t^n_{j}\wedge t)\right),$$
converges uniformly on $[0,T]$ to a continuous (non-decreasing) function $[x]_{\pi} \in C^0([0,T],S^+_d)$.
\end{proposition}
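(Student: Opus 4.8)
The plan is to reduce everything, via Definition~\ref{defn.qv.vector}, to a statement about the scalar paths $x^i$ and $x^i+x^j$, and for each scalar $y\in C^0([0,T],\mathbb{R})$ to compare the interpolated function $[y]_{\pi^n}$ of Proposition~\ref{prop.cont.qv} with the \emph{step} distribution function $F_n(t):=\sum_{k:\,t^n_k\le t}\big(y(t^n_{k+1})-y(t^n_k)\big)^2$ of the discrete measure of Definition~\ref{def.pathwiseQV}. Since $y$ is uniformly continuous with some modulus $\omega$, the two functions differ at any $t$ only through the contribution of the partition cell containing $t$, so that
\[
\sup_{t\in[0,T]}\big|[y]_{\pi^n}(t)-F_n(t^-)\big|\ \le\ \omega(|\pi^n|)^2,
\]
which tends to $0$ because $|\pi^n|\to 0$. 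It is therefore enough to work with the nondecreasing step functions $F_n$, and the key analytic input is P\'olya's theorem: pointwise convergence of monotone functions on a compact interval to a \emph{continuous} monotone limit is automatically uniform.

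For $(\Rightarrow)$, assume $x\in Q_\pi([0,T],\mathbb{R}^d)$, so by Definition~\ref{defn.qv.vector} each scalar $y\in\{x^i\}\cup\{x^i+x^j\}$ lies in $Q_\pi([0,T],\mathbb{R})$ and the associated discrete measures converge weakly to a Radon measure with continuous nondecreasing distribution function $[y]_\pi$. Weak convergence on the compact $[0,T]$, together with continuity of $[y]_\pi$ (so the limiting measure charges no single point, the endpoint $T$ being handled by testing against $\mathbf 1$), gives $F_n(t)\to[y]_\pi(t)$ for every $t$; by P\'olya's theorem this is uniform, hence so is $[y]_{\pi^n}\to[y]_\pi$ by the modulus estimate. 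The term-by-term identity $2ab=(a+b)^2-a^2-b^2$ gives $[x^i+x^j]_{\pi^n}=[x^i]_{\pi^n}+[x^j]_{\pi^n}+2[x]^{i,j}_{\pi^n}$, so $[x]^{i,j}_{\pi^n}$ converges uniformly to $[x]^{i,j}_\pi$ as well. The limit $[x]_\pi$ is then continuous, and it has increments in $S^+_d$: for $s<t$ the matrix $[x]_\pi(t)-[x]_\pi(s)$ is the limit, valid at the continuity points $s,t$, of the positive-semidefinite matrices $\sum_{t^n_k\in(s,t]}(\Delta^n_k x)(\Delta^n_k x)^{\top}$ with $\Delta^n_k x:=x(t^n_{k+1})-x(t^n_k)$, and $S^+_d$ is closed.

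For $(\Leftarrow)$, assume $[x]_{\pi^n}\to A$ uniformly with $A\in C^0([0,T],S^+_d)$ nondecreasing. Since $[x]_{\pi^n}(t)$ is a sum of outer products, its quadratic form at a vector $v$ equals $\sum_k\big(v^{\top}\Delta^n_k x(t)\big)^2$, which for $v=e_i$ is $[x^i]_{\pi^n}(t)$ and for $v=e_i+e_j$ is $[x^i+x^j]_{\pi^n}(t)$; hence these converge uniformly to $e_i^{\top}Ae_i$ and $(e_i+e_j)^{\top}A(e_i+e_j)$, which are continuous and nondecreasing because $A(t)-A(s)\in S^+_d$ for $s<t$. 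By the modulus estimate the corresponding step functions $F_n$ converge uniformly to these same limits. Finally, uniform convergence of the distribution functions of a sequence of positive measures of uniformly bounded total mass to a \emph{continuous} limit implies weak convergence of the measures (integrate by parts against $C^1$ test functions, pass to the limit using the mass bound, and conclude by density of $C^1$ in $C^0([0,T])$); the weak limits are the Lebesgue--Stieltjes measures of those continuous nondecreasing functions. Hence $x^i,\,x^i+x^j\in Q_\pi([0,T],\mathbb{R})$ for all $i,j$, i.e.\ $x\in Q_\pi([0,T],\mathbb{R}^d)$ with $[x]_\pi=A$.

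The main obstacle is the passage from pointwise (equivalently weak) convergence to \emph{uniform} convergence: this is precisely where continuity of the limit enters, through P\'olya's theorem, and it is also where the characterization genuinely fails if the limit is allowed to be discontinuous. The remaining ingredients — the bookkeeping linking $[y]_{\pi^n}$ to $F_n$, the discrete polarization identity, and the equivalence between convergence of distribution functions and weak convergence of measures on a compact interval — are routine.
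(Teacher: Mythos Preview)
The paper does not actually give a proof of this proposition: it is stated with a reference to \cite{cont2012,chiu2018} and no argument is supplied. So there is no ``paper's own proof'' to compare against.

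Your argument is correct and is essentially the standard one. The reduction to scalar paths $x^i$ and $x^i+x^j$ via Definition~\ref{defn.qv.vector} and polarization is exactly how the multidimensional case is set up in the paper, and the two analytic ingredients you isolate --- the $\omega(|\pi^n|)^2$ comparison between the interpolated function $[y]_{\pi^n}$ and the step distribution function $F_n$, and P\'olya's theorem to upgrade pointwise to uniform convergence when the monotone limit is continuous --- are precisely what is needed. Your handling of the two directions is clean: weak convergence $\Rightarrow$ pointwise convergence of $F_n$ at continuity points (all points, since the limit is continuous) $\Rightarrow$ uniform by P\'olya; conversely, uniform convergence of distribution functions to a continuous limit $\Rightarrow$ weak convergence of the associated measures. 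The closedness of $S^+_d$ for the nondecreasing property of the matrix limit is the right observation. One cosmetic remark: the displayed formula in the proposition writes $(\Delta x)^t(\Delta x)$, which is a scalar; as you implicitly recognized by writing $(\Delta^n_k x)(\Delta^n_k x)^\top$, the intended object is the outer product, consistent with $[x]_\pi$ taking values in $S^+_d$.
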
 
 
The notion of quadratic variation along a sequence of partitions is different from the p-variation for $p=2$. The p-variation involves taking a supremum  over {\it all} partitions, whereas quadratic variation is a limit taken along a specific partition sequence $(\pi^n)_{n\geq 1}$. In general $[x]_\pi$ given by \eqref{eq:qv} is smaller than the p-variation for $p=2$. In fact, for diffusion processes, the typical situation is that p-variation is (almost-surely) infinite for $p=2$ \cite{dudley2011,taylor1972} while the quadratic variation is finite for sequences satisfying some mesh size condition. 
For instance, typical paths of Brownian motion have finite quadratic variation along any sequence of partitions with mesh size $o(1/\log n)$ \cite{dudley1973,delavega1974} while simultaneously having infinite p-variation almost surely for $p\leq 2$ 
\cite[p. 190]{levy1965} :
$$ \inf_{\pi\in \Pi(0,T)} \sum_{\pi} |W(t_{k+1})- W(t_k)|^2 = 0,\qquad {\rm while}\qquad\sup_{\pi\in \Pi(0,T)} \sum_{\pi} |W(t_{k+1})- W(t_k)|^2 = \infty$$
almost-surely.

Definition \ref{def.pathwiseQV} is sensitive to the choice of the partition sequence and is not invariant with respect to this choice, as discussed \cite{davis2018,das2020}. This dependence of quadratic variation with respect to the choice of the partition sequence is discussed in detail in \cite{das2020}. We will come back to this point in our examples below, especially in Section \ref{sec5}.

\section{Schauder system associated with a finitely refining partition sequence}\label{sec2}
 
 The constructions in \cite{gantert1994,schied2016,schied2016b} made use of the 
 Haar basis \cite{haar1910} and Faber-Schauder system \cite{schauder,semadeni1982} associated with a dyadic partition sequence.
 
This is a commonly used tool, but they are constructed  along dyadic partitions. There are current literatures on non-uniform Haar wavelets extensions \cite{francois2003}, but they do not generate an orthonormal basis, as in the uniform case. In this section, firstly we introduce the class of {\it finitely refining} partition sequences which can be thought of a branching process with finite branching at every level (locally), but does not process any global bound on the ratio of partition sizes. Then we construct an orthonormal `non-uniform' Haar basis and a corresponding Schauder system along any finitely refining sequence of partitions.
\subsection{Sequences of interval partitions}
\begin{definition}[Refining sequence of partition]
A sequence of partitions $\pi=(\pi^n)_{n\geq 1}$ of $[0,T]$ with \[ \pi^n= \left(0=t^n_1<t^n_2<\cdots<t^n_{N(\pi^n)}=T\right),\] 
is said to be a refining (or nested) sequence of partitions if 
\[\text{ for all } n\geq 1, \quad t\in \pi^m \implies t\in \cap_{n=m}^{\infty} \pi^n.\]
\end{definition}
In particular $\pi^1\subseteq \pi^2 \subseteq \cdots$. 
Now we introduce a subclass of refining partitions that have a `finite branching' property at every level. 
\begin{definition}[Finitely refining sequence of partitions]\label{finite.refining}
We call a sequence of partitions $\pi$ of $[0,T]$ to be a finitely refining sequence of partitions if $\pi$ is refining with mesh $|\pi^n|\to 0$ and  there exists $ M < \infty$ such that the number of partition points of $\pi^{n+1}$ within any two consecutive partition points of $\pi^n$ is always bounded above by $M$, irrespective of $n\in \mathbb{N}$. 
\end{definition}
For a finitely refining sequence of partitions $\pi$, there exists $M< \infty$ such that $\sup_n \frac{N(\pi^n)}{M^n}\leq 1$.
A subsequence of a finitely refining sequence may not be a finitely refining sequence but has to be a refining sequence. This property ensures the partition has locally finite branching at every step but do not ensure any global bound on partitions size. This is ensured by the following property \cite{das2020}:
\begin{definition}[Balanced partition sequence]\label{def.balance}
Let $\pi^n=\left(0=t^{n}_0<t^n_1<\cdots <t^{n}_{N(\pi^n)}=T\right)$ be a sequence of partitions of $[0,T]$. Then we say $\pi=(\pi^n)_{n\geq 1}$ is {\em balanced} if 
\begin{equation}\text{there exists } \; c>0, \quad \text{such that, for all }\;  n\geq 1, \quad \frac {|\pi^n|}{\underline{\pi^n}}\leq c.\label{eq.balance}\end{equation} 
\end{definition}
The balanced condition for partition means that all intervals in the partition $\pi^n$ are asymptotically comparable. Note that since $\underline{\pi^n}N(\pi^n)\leq T$, any balanced sequence of partitions also satisfies
\begin{equation}
|\pi^n|\leq c \ \underline{\pi^n} \leq \frac{cT}{N(\pi^n)}.\label{eq.wellbalanced}
\end{equation}
If a sequence of partitions $\pi$ of $[0,T]$ is finitely refining and balanced at the same time (for example dyadic/uniform partition) then $$\limsup_{n} \frac{|\pi^n|}{\underline{\pi^{n+1}}}<\infty.$$
\begin{definition}[complete refining partition]
A sequence of partitions $\pi=(\pi^n)_{n\geq 1}$ of $[0,1]$ is said to be complete refining if there exists positive constants $\epsilon$ and $M$ such that:
\[\text{ for all } n\geq 1, \quad 1+\epsilon \leq \frac{|\pi^n|}{|\pi^{n+1}|}\leq M.\]
\end{definition}

\subsection{Haar basis associated with a finitely refining partition sequence}
Let $\pi$ be a finitely refining sequence of refining partition of $[0,1]$ 
\[\pi^n= \left(0=t^n_0<t^n_1<\cdots<t^n_{N(\pi^n)}=1\right) \]
with   mesh $|\pi^n|\to 0$.
  Now define $p(n,k)$ as follows:
 \[p(n,k) = \inf\{j\geq 0 \; :\;t^{n+1}_j\geq t^n_k \}. \]
 Since $\pi$ is refining   the following inequality holds:
\begin{equation}\label{EqFor_p}
\text{for all }\; k=0,\cdots, N(\pi^n)-1, \qquad 0\leq t^{n}_{k}= t^{n+1}_{p(n,k)}<t^{n+1}_{p(n,k)+1}<\cdots <t^{n+1}_{p(n,k+1)}= t^{n}_{k+1}\leq 1.    
\end{equation}
We now define the Haar basis associated with such as partition sequence:
\begin{definition}[Haar basis]
The   Haar basis associated with a  finitely refining partition sequence $\pi=(\pi^n)_{n\geq 1}$ is a collection of   piece-wise constant functions $\{\psi_{m,k,i}, m=0,1,\cdots, k=0,\cdots,N(\pi^m)-1, i = 1,\cdots, p(m,k+1)-p(m,k)\}$ defined as follows:
\ba\label{haar_basis}
\psi_{m,k,i}(t)= 
\begin{cases}
  \quad 0 &\quad\text{if } t\notin \left[t^{m+1}_{p(m,k)},t_{p(m,k)+i}^{m+1}\right) \\
  
  \left( \frac{t^{m+1}_{p(m,k)+i}-t^{m+1}_{p(m,k)+i-1}}{t^{m+1}_{p(m,k)+i-1} - t^{m+1}_{p(m,k)}}\times\frac{1}{t^{m+1}_{p(m,k)+i}-t^{m+1}_{p(m,k)}} \right)^{\frac{1}{2}} &\quad\text{if } t\in\left[t_{p(m,k)}^{m+1},t_{p(m,k)+i-1}^{m+1}\right) \\
  
  -\left( \frac{t^{m+1}_{p(m,k)+i-1} - t^{m+1}_{p(m,k)}}{t^{m+1}_{p(m,k)+i}-t^{m+1}_{p(m,k)+i-1}}\times\frac{1}{t^{m+1}_{p(m,k)+i}-t^{m+1}_{p(m,k)}}\right)^{\frac{1}{2}} &\quad\text{if } t\in \left[t_{p(m,k)+i-1}^{m+1},t_{p(m,k)+i}^{m+1}\right).
  \end{cases} 
\ea
Note, $t^{m+1}_{p(m,k)+i-1}\in \pi^{m+1}\backslash\pi^{m}$ for all $i$ and $t^{m+1}_{p(m,k)}=t^m_k \in \pi^m \cap \pi^{m+1}$. Since $\pi$ is a finitely refining sequence of partitions $p(m,k+1)-p(m,k)\leq M <\infty$, for all $m,k$.
\end{definition}
For any finitely refining partition $\pi$, the  family of functions $\{\psi_{m,k,i}\}_{m,k,i}$ can be reordered as $\{\psi_{m,k}\}_{m,k}$. For each level $m\in \{0,1,\cdots\}$, the values of $k$ runs from $0$ to $N(\pi^{m+1})-N(\pi^m)-1$ (after reordering).

The following properties are easily derived from the definition:
\begin{proposition}\label{Haar.properties}
The non-uniform Haar basis along a finitely refining sequence of partitions $\pi=(\pi^n)_{n\geq 1}$ has the following properties:
\\(i). For fixed $m\in \{0\}\cup \mathbb{N}$, the piece-wise constant functions $\psi_{m,k,i}(t)$ and $\psi_{m,k',i'}(t)$ have disjoint supports for all $k\neq k'\in \{0,1,\cdots, N(\pi^m)-1\}$ and for all $i,i'$.
\\(ii). For fixed $m\in \{0\}\cup \mathbb{N}$ and fixed $k$, the support of the piece-wise constant function $\psi_{m,k,i}(t)$ is contained in the support of $\psi_{m,k,i'}(t)$ as soon as $i\leq i'$.
\\(iii). For all $m\in \{0\}\cup \mathbb{N}$, for all $k\in \{0,1,\cdots, N(\pi^m)-1\}$ and for all $i$
\[\int_{\mathbb{R}} \psi_{m,k,i}(t)dt =\int_{0}^1 \psi_{m,k,i}(t)dt = 0. \]
(iv). Orthogonality:
\[\int_{\mathbb{R}} \psi_{m,k,i}(t)\psi_{m',k',i'}(t)dt =\int_{0}^1 \psi_{m,k,i}(t)\psi_{m',k',i'}(t)dt = \mathbbm{1}_{m,m'}\mathbbm{1}_{k,k'}\mathbbm{1}_{i,i'}, \]
where $\mathbbm{1}_{a,b}$ is $1$ if $a=b$ and $0$ otherwise.
\end{proposition}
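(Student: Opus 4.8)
The plan is to verify properties (i)--(iv) directly from the defining formula \eqref{haar_basis}, exploiting the nesting relation \eqref{EqFor_p}. First I would record the one fact that drives everything: for fixed $m$ and $k$, every function $\psi_{m,k,i}$ is supported in the half-open interval $[t^m_k, t^m_{k+1}) = [t^{m+1}_{p(m,k)}, t^{m+1}_{p(m,k+1)})$, since $p(m,k)+i \leq p(m,k+1)$ for the admissible range of $i$. Property (i) then follows because the intervals $[t^m_k,t^m_{k+1})$, $k=0,\dots,N(\pi^m)-1$, are pairwise disjoint and partition $[0,1)$; so $\psi_{m,k,i}$ and $\psi_{m,k',i'}$ with $k\neq k'$ have disjoint supports regardless of $i,i'$. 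Property (ii) is immediate from the formula: the support of $\psi_{m,k,i}$ is exactly $[t^{m+1}_{p(m,k)}, t^{m+1}_{p(m,k)+i})$, and these nest increasingly in $i$.

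For property (iii) I would compute $\int_0^1 \psi_{m,k,i}(t)\,dt$ by splitting the support into the two pieces where $\psi_{m,k,i}$ is constant. Writing $a = t^{m+1}_{p(m,k)}$, $b = t^{m+1}_{p(m,k)+i-1}$, $c = t^{m+1}_{p(m,k)+i}$ for brevity, the integral equals
\[
(b-a)\left(\frac{c-b}{b-a}\cdot\frac{1}{c-a}\right)^{1/2} - (c-b)\left(\frac{b-a}{c-b}\cdot\frac{1}{c-a}\right)^{1/2},
\]
and both terms simplify to $\left((b-a)(c-b)/(c-a)\right)^{1/2}$, so the difference is zero. (This also confirms the normalization: $\int \psi_{m,k,i}^2 = (b-a)\cdot\frac{c-b}{(b-a)(c-a)} + (c-b)\cdot\frac{b-a}{(c-b)(c-a)} = \frac{c-b}{c-a} + \frac{b-a}{c-a} = 1$, which I would note in passing since it is part of (iv) with $m=m'$, $k=k'$, $i=i'$.)

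The substantive case of property (iv) --- and the step I expect to require the most care --- is orthogonality of $\psi_{m,k,i}$ and $\psi_{m,k,i'}$ for the \emph{same} $(m,k)$ but $i < i'$. By (ii) the support of $\psi_{m,k,i}$ sits inside $[t^{m+1}_{p(m,k)}, t^{m+1}_{p(m,k)+i})$, and on that whole set $\psi_{m,k,i'}$ is \emph{constant} (equal to its ``positive'' value, since $i < i'$ means we are strictly to the left of the interval $[t^{m+1}_{p(m,k)+i'-1}, t^{m+1}_{p(m,k)+i'})$ on which $\psi_{m,k,i'}$ takes its negative value). Hence $\int \psi_{m,k,i}\psi_{m,k,i'} = (\text{const})\cdot\int \psi_{m,k,i} = 0$ by (iii). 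The remaining subcase $i > i'$ is symmetric. Finally, cross-level orthogonality with $m\neq m'$ reduces, after using (i) to discard the case of incomparable supports, to the same mechanism: if the supports overlap then (because the partitions are nested) the support of the finer-level function is contained in a single constancy interval of the coarser-level one, so the integral is a constant times $\int(\text{finer } \psi) = 0$. Assembling these cases gives the Kronecker-delta formula, and I would remark that together with the standard density argument (continuous functions, hence $L^2[0,1]$, are approximated by the piecewise-constant functions adapted to $\pi^n$ as $|\pi^n|\to 0$) this family is in fact an orthonormal basis of $L^2([0,1])$, which is what is used in Section~\ref{sec2}.
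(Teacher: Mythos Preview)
Your proof is correct. The paper does not actually supply a proof of this proposition---it simply remarks that ``the following properties are easily derived from the definition''---and your direct verification from the defining formula~\eqref{haar_basis} is exactly the argument the paper leaves to the reader, carried out cleanly (the key observations being the nesting of supports in $i$, the explicit cancellation in~(iii), and the reduction of every nontrivial orthogonality case to a constant times $\int\psi_{m,k,i}=0$).
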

 As a consequence of (iii) and (iv), 
the family $\{\psi_{m,k,i}; \quad \forall m,k,i\}$  is an orthonormal family.

\subsection{Schauder representation of a continuous function}
The Schauder basis functions $e^{\pi}_{m,k,i}$ are obtained by integrating the  Haar basis functions:
\[e^{\pi}_{m,k,i}: [0,1]\to \mathbb{R} \; \text{ where,} \quad e^{\pi}_{m,k,i}(t)= \int_0^t \psi_{m,k,i}(s)ds =\left(\int_{t^{m+1}_{p(m,k)}}^{t\wedge t^{m+1}_{p(m,k)+i}} \psi_{m,k,i}(s)ds\right) \mathbbm{1}_{[t^m_k,t^{m+1}_{p(m,k)+i}]}. 
\]
For all $m,k,i$ the functions $e^\pi_{m,k,i}:[0,1]\to \mathbb{R}$ are continuous but not differentiable and 
{\footnotesize
\ba\label{defn.e}
 e^{\pi}_{m,k,i}(t) =
\begin{cases}
  \quad 0 &\quad\text{if } t\notin \left[t_{p(m,k)}^{m+1},t_{p(m,k)+i}^{m+1}\right) \\
  
  \left( \frac{t^{m+1}_{p(m,k)+i}-t^{m+1}_{p(m,k)+i-1}}{t^{m+1}_{p(m,k)+i-1} - t^{m+1}_{p(m,k)}}\times\frac{1}{t^m_{p(m,k)+i}-t^m_{p(m,k)}} \right)^{\frac{1}{2}}\times(t-t^{m+1}_{p(m,k)}) &\quad\text{if } t\in\left[t_{p(m,k)}^{m+1},t_{p(m,k)+i-1}^{m+1}\right) \\
  
  \left( \frac{t^{m+1}_{p(m,k)+i-1} - t^{m+1}_{p(m,k)}}{t^{m+1}_{p(m,k)+i}- t^{m+1}_{p(m,k)+i-1}}\times\frac{1}{t^{m+1}_{p(m,k)+i}- t^{m+1}_{p(m,k)}}\right)^{\frac{1}{2}}\times(t^{m+1}_{p(m,k)+i}-t) &\quad\text{if } t\in \left[t_{p(m,k)+i-1}^{m+1},t_{p(m,k)+i}^{m+1}\right)
  \end{cases}.
\ea 
}

Assume that $x\in C^0([0,1], \mathbb{R})$ is a continuous function with the following Schauder representation along a finitely refining sequence of partitions $\pi$:
\[ x(t) = a_0+a_1t+ \sum_{m=0}^{\infty} \sum_{k=0}^{N(\pi^{m+1})-N(\pi^m)-1} \theta_{m,k} e^{\pi}_{m,k}(t), \quad \]
where, for all $ m,k$, the coefficients $ a_0,a_1,\theta_{m,k}\in \mathbb{R}$; are constants.   Denote by $x^N(t):[0,1]\to \mathbb{R}\in C^0([0,1],\mathbb{R})$ the linear interpolation of $x$ along partition points of $\pi^N$:
\[ x^N(t) = a_0+a_1t+ \sum_{m=0}^{N-1} \sum_{k=0}^{N(\pi^{m+1})-N(\pi^m)-1} \theta_{m,k} e^{\pi}_{m,k}(t). \quad \]
\begin{lemma}\label{prop.partition.points1}
For all $N\geq 2$, for all $t\in \pi^N$ one have, $x(t) = x^N(t)$.
\end{lemma}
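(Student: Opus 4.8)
The plan is to show that the ``tail'' of the Schauder expansion, namely
\[
x(t) - x^N(t) \;=\; \sum_{m=N}^{\infty}\sum_{k=0}^{N(\pi^{m+1})-N(\pi^m)-1} \theta_{m,k}\, e^{\pi}_{m,k}(t),
\]
vanishes at every point $t$ of $\pi^N$, by checking that each individual term already vanishes there. Since at a fixed $t\in\pi^N$ this reduces to a term-by-term statement, no convergence or uniformity estimate is needed: the argument is purely about the support of the Schauder functions and the nestedness of the partitions.

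First I would record the localization property of the Schauder functions. From \eqref{defn.e} together with \eqref{EqFor_p}, each $e^{\pi}_{m,k,i}$ is continuous, piecewise linear, supported on $\bigl[t^{m+1}_{p(m,k)},\,t^{m+1}_{p(m,k)+i}\bigr]\subseteq \bigl[t^{m}_{k},\,t^{m}_{k+1}\bigr]$, identically $0$ outside this support, and equal to $0$ at the two endpoints $t^{m+1}_{p(m,k)}=t^m_k$ and $t^{m+1}_{p(m,k)+i}$. Consequently $e^{\pi}_{m,k,i}$ vanishes at \emph{every} point of $\pi^m$: such a point either lies outside the open interval $\bigl(t^{m}_{k},t^{m}_{k+1}\bigr)$ (where $e^{\pi}_{m,k,i}$ is $0$ already, since $t^m_{k+1}=t^{m+1}_{p(m,k+1)}\ge t^{m+1}_{p(m,k)+i}$ sits at or past the right end of the support), or it equals $t^m_k$ or $t^m_{k+1}$, where $e^{\pi}_{m,k,i}$ is again $0$. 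This property is inherited by the reordered functions $e^{\pi}_{m,k}$, since each of them is one of the $e^{\pi}_{m,k,i}$.

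Next I would fix $N\ge 2$, a point $t\in\pi^N$, and an arbitrary level $m\ge N$. Because $\pi$ is a refining sequence we have $\pi^N\subseteq\pi^m$, hence $t\in\pi^m$, and by the previous paragraph $e^{\pi}_{m,k}(t)=0$ for every admissible $k$. Therefore, for every $M\ge N-1$ the partial sum $a_0+a_1 t+\sum_{m=0}^{M}\sum_{k}\theta_{m,k}e^{\pi}_{m,k}(t)$ is \emph{exactly} equal to $x^N(t)$; letting $M\to\infty$ and using that the Schauder series converges to $x(t)$ yields $x(t)=x^N(t)$.

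I do not expect any real obstacle here. The entire content is the combination of (i) the support localization of the level-$m$ Schauder functions inside $\pi^m$-intervals with vanishing at $\pi^m$-points, which is immediate from \eqref{defn.e} and \eqref{EqFor_p}, and (ii) the refining property $\pi^N\subseteq\pi^m$ for $m\ge N$. The only point requiring a moment's care is the index bookkeeping between the ``level'' $m$ and the partition labels, i.e.\ confirming that a level-$m$ basis function is controlled by the partition $\pi^m$ (not $\pi^{m+1}$) at its endpoints; once this is fixed the conclusion is immediate.
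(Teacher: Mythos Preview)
Your proposal is correct and follows the same approach as the paper: both argue that for $m\ge N$ every Schauder function $e^{\pi}_{m,k}$ vanishes at points of $\pi^N$, so the tail of the series is identically zero there. The paper compresses this into a single line (``from the construction of $e^{\pi}_{m,k}$, for all $m\ge N$ and all $k$, $e^{\pi}_{m,k}(t^N_i)=0$''), whereas you spell out the support localization and the use of the refining property $\pi^N\subseteq\pi^m$; this is a difference in detail, not in method.
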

\begin{proof}
From the construction of $e^{\pi}_{m,k}$, for all $m\geq N,$ and   for all $k$, we have $e^{\pi}_{m,k}(t^N_{i}) = 0$. So for $t\in \pi^N$ :
\[ x(t) = \sum_{m=0}^{\infty} \sum_{k=0}^{N(\pi^{m+1})-N(\pi^m)-1} \theta_{m,k} e^{\pi}_{m,k}(t)=\sum_{m=0}^{N-1} \sum_{k=0}^{N(\pi^{m+1})-N(\pi^m)-1} \theta_{m,k} e^{\pi}_{m,k}(t)= x^N(t). \]
\end{proof}
If the  sequence of partitions $\pi$ has vanishing mesh then as a limit the continuous function $x^N$ converges to $x\in C^0([0,1],\mathbb{R})$ in uniform norm 
\[\lim_{N\to \infty} \sup_{t\in [0,1]}\left| x^N(t)- x(t) \right| =0.\]

\begin{theorem} \label{coeff_hat_func}
Let $\pi$ be a finitely refining sequence of partitions of $[0,T]$. Then any $x\in C^0([0,1],\mathbb{R})$  has a unique Schauder representation:
\[ x(t) =x(0) +(x(1)-x(0))t+ \sum_{m=0}^{\infty} \sum_{k=0}^{N(\pi^{m+1})-N(\pi^{m})-1} \theta_{m,k} e^{\pi}_{m,k}(t).\]
If the support of the function $e_{m,k}^\pi$ is $[t^{m,k}_1,t^{m,k}_3]$ and  its maximum is attained at time $t^{m,k}_2$ then, the coefficient $\theta_{m,k}$ has a closed form representation as follows:
\begin{equation}\label{eq.theta.coeff}
 \theta_{m,k} = \frac{\bigg[\left(x(t^{m,k}_{2})-x(t^{m,k}_{1})\right)(t^{m,k}_{3}-t^{m,k}_{2})-\left(x(t^{m,k}_{3})- x(t^{m,k}_{2})\right)(t^{m,k}_{2}-t^{m,k}_1) \bigg]}{\sqrt{(t^{m,k}_2-t^{m,k}_1)(t^{m,k}_3-t^{m,k}_2)(t^{m,k}_3-t^{m,k}_1)}}.
\end{equation}
\end{theorem}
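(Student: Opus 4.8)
The plan is to establish existence and uniqueness of the representation by the familiar telescoping of piecewise-linear interpolants, and then to extract \eqref{eq.theta.coeff} by applying to the resulting series a ``second-difference'' functional tailored to $e^\pi_{m,k}$.

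For $N\ge 0$, with the convention $\pi^0=\{0,1\}$, let $x^N\in C^0([0,1],\mathbb{R})$ be the linear interpolation of $x$ at the points of $\pi^N$, so that $x^0(t)=x(0)+(x(1)-x(0))t$. For each $N\ge1$ the function $x^{N}-x^{N-1}$ vanishes at every point of $\pi^{N-1}$ and is affine on every interval of $\pi^{N}$; such functions form a linear space of dimension $N(\pi^{N})-N(\pi^{N-1})$, which is precisely the number of level-$(N-1)$ Schauder functions. Each $e^\pi_{N-1,k}$ belongs to that space, since it vanishes at both endpoints of its supporting $\pi^{N-1}$-interval, and the family $\{e^\pi_{N-1,k}\}_k$ is linearly independent because by Proposition \ref{Haar.properties} its (a.e.\ defined) derivatives $\{\psi_{N-1,k}\}_k$ are orthonormal; hence it is a basis of that space and $x^{N}-x^{N-1}=\sum_k\theta_{N-1,k}\,e^\pi_{N-1,k}$ for unique coefficients $\theta_{N-1,k}$. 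Summing over $N$ gives $x^N=x(0)+(x(1)-x(0))t+\sum_{m=0}^{N-1}\sum_k\theta_{m,k}\,e^\pi_{m,k}$, and since $|\pi^N|\to0$ and $x$ is uniformly continuous, $x^N\to x$ uniformly on $[0,1]$, which yields the asserted representation with a uniformly convergent series. Uniqueness follows by evaluating any representation of $x$ successively at the points of $\pi^1,\pi^2,\dots$, where all finer-level terms vanish, and invoking the same linear independence at each level.

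To derive \eqref{eq.theta.coeff}, fix $m,k$ and write $[t_1,t_3]=[t^{m,k}_1,t^{m,k}_3]$ for the support of $e^\pi_{m,k}$ and $t_2=t^{m,k}_2$ for its peak, so that $t_1\in\pi^m$, $t_2,t_3\in\pi^{m+1}$, and from \eqref{defn.e},
\[ e^\pi_{m,k}(t_1)=e^\pi_{m,k}(t_3)=0, \qquad e^\pi_{m,k}(t_2)=\left(\frac{(t_2-t_1)(t_3-t_2)}{t_3-t_1}\right)^{1/2}. \]
Introduce the linear functional
\[ D(f)=f(t_2)-\frac{t_3-t_2}{t_3-t_1}\,f(t_1)-\frac{t_2-t_1}{t_3-t_1}\,f(t_3), \]
the signed gap at $t_2$ between $f$ and its chord over $[t_1,t_3]$; $D$ is bounded for the uniform norm, so it may be applied term by term to the uniformly convergent Schauder series of $x$. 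It annihilates every affine function, in particular $x(0)+(x(1)-x(0))t$, and it annihilates every basis term $e^\pi_{j,l}$ with $(j,l)\ne(m,k)$: (a) if $j<m$, then $e^\pi_{j,l}$ is affine on $[t_1,t_3]$, because $[t_1,t_3]$ lies in a single interval of $\pi^m$ and hence --- $\pi$ being refining with $\pi^{j+1}\subseteq\pi^m$ --- in a single interval of $\pi^{j+1}$, whereas $e^\pi_{j,l}$ has break-points only at points of $\pi^{j+1}$; (b) if $j\ge m+1$, then $e^\pi_{j,l}$ vanishes at all points of $\pi^{m+1}$, in particular at $t_1,t_2,t_3$; (c) if $j=m$ and $e^\pi_{m,l}$ is supported in an interval of $\pi^m$ other than the one containing $[t_1,t_3]$, it vanishes on $[t_1,t_3]$ by Proposition \ref{Haar.properties}(i), while if $e^\pi_{m,l}$ is supported in the same interval of $\pi^m$, then by \eqref{defn.e} it is affine on $[t_1,t_3]$ when its peak lies to the right of $t_2$ (so $[t_1,t_3]$ falls inside its rising segment) and it vanishes at each of $t_1,t_2,t_3$ when its peak lies to the left of $t_2$. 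Hence $D(x)=\theta_{m,k}\,e^\pi_{m,k}(t_2)$; solving for $\theta_{m,k}$, expanding $D(x)$, and clearing the square roots produces precisely \eqref{eq.theta.coeff}.

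The only genuinely delicate point is case (c): after the reordering of the Haar/Schauder family there may be several level-$m$ Schauder functions sharing the interval of $\pi^m$ that contains $[t_1,t_3]$ and simultaneously nonzero at $t_2$, so one must read off from the explicit form \eqref{defn.e} that every such function is affine on an initial segment $[t^{m+1}_{p(m,k)},t^{m+1}_{p(m,k)+i-1}]$ that contains $[t_1,t_3]$, or else has support meeting $\{t_1,t_2,t_3\}$ only where it vanishes. Matching the dimension $N(\pi^{N})-N(\pi^{N-1})$ with the reordered level-$(N-1)$ index set in the existence part calls for comparable bookkeeping; everything else is elementary algebra with the formulas \eqref{haar_basis}--\eqref{defn.e}.
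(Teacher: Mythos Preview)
Your proof is correct and, at its core, uses the same device as the paper: the weighted second difference
\[
\big(x(t_2)-x(t_1)\big)(t_3-t_2)-\big(x(t_3)-x(t_2)\big)(t_2-t_1),
\]
which is exactly $(t_3-t_1)\,D(x)$ in your notation, so your functional $D$ and the paper's computation coincide up to a scalar. The paper first truncates to $x^{m+1}$ via Lemma~\ref{prop.partition.points1} (so it never sees levels $j\ge m+1$) and then expands the two increments through the Haar functions $\psi_{n,k}$, using that $\psi_{n,k}(t_1)=\psi_{n,k}(t_2)$ for $n<m$ to force the cancellation; you instead apply $D$ term by term to the full uniformly convergent series and kill the extraneous terms by the affine/vanishing dichotomy in your cases (a)--(c). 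Your case~(c) analysis of the several level-$m$ Schauder functions sharing the same $\pi^m$-interval is the one piece the paper leaves implicit, and your reading of \eqref{defn.e} handles it correctly.

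What your route buys is a cleaner and more self-contained existence argument: the paper's proof essentially presupposes the representation (leaning on the discussion around Lemma~\ref{prop.partition.points1}), whereas your telescoping $x^N-x^{N-1}$ together with the dimension count and the orthonormality of the derivatives $\{\psi_{N-1,k}\}_k$ establishes it explicitly. Conversely, the paper's computation through the piecewise-constant $\psi$'s makes the final algebraic identity for $\theta_{m,k}$ drop out in two lines without any case analysis.
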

\begin{proof}
Take the function $y$ as $y(t)= x(t)- x(0) + (x(0)-x(1))t$. Since $x$ is a continuous function, so does $y$. Also for the function $y$ we have $y(0)=y(1)=0$. So without loss of generality we will assume $x(0)=x(1)=0$ for the rest of the proof.
\par Since $t^{m,k}_1,t^{m,k}_2, t^{m,k}_3 \in \pi^{m+1}$, using Proposition \ref{prop.partition.points1} we get:
\[ \quad x(t^{m,k}_1)= x^{m+1}(t^{m,k}_1),\quad x(t^{m,k}_2)=x^{m+1}(t^{m,k}_{2})\quad \text{and } \; x(t^{m,k}_3)=x^{m+1}(t^{m,k}_3). \]
Now we can write the increment $ x(t^{m,k}_2)-x(t^{m,k}_1)$ as follows.
\[x(t^{m,k}_2)-x(t^{m,k}_1) = \left(x^{m+1}(t^{m,k}_2)-x^{m+1}(t^{m,k}_1)\right)\]
\[= \sum_{n=0}^{m} \sum_{\{k: \psi_{n,k}(t^{m,k}_1)\neq 0\}}\theta_{n,k}\times \psi_{n,k}(t^{m,k}_{1}) \times (t^{m,k}_2-t^{m,k}_{1}), \]
where $k$ is such that for which the function $\psi_{n,k}$ has strictly positive value in the interval $(t^{m,k}_{1},t^{m,k}_2)$. Now one can notice that for all $n<m$, $\psi_{n,k(.)}(t^{m,k}_{1}) = \psi_{n,k(.)}(t^{m,k}_2)$. So for the expansion of weighted second difference $\left(x(t^{m,k}_2)-x(t^{m,k}_1)\right)\left(t^{m,k}_3-t^{m,k}_2\right)-\left(x(t^{m,k}_3)- x(t^{m,k}_2)\right)$ $\left( t^{m,k}_2-t^{m,k}_1\right)$, all values cancel out except for the term involving $\theta_{m,k}$. So we get the following identity:
\[\left(x(t^{m,k}_2)-x(t^{m,k}_1)\right)(t^{m,k}_3-t^{m,k}_2)-\left(x(t^{m,k}_3)- x(t^{m,k}_2)\right)(t^{m,k}_2-t^{m,k}_1) )\]
\[ = \theta_{m,k}\left[\psi_{m,k}(t^{m,k}_1) \times (t^{m,k}_2-t^{m,k}_1)(t^{m,k}_3- t^{m,k}_2)-\psi_{m,k}(t^{m,k}_2) \times (t^{m,k}_3-t^{m,k}_2)(t^{m,k}_2- t^{m,k}_1) \right] \]
\[ = \theta_{m,k} \times (t^{m,k}_3-t^{m,k}_2)(t^{m,k}_2- t^{m,k}_1)\left[\left( \frac{t^{m,k}_3-t^{m,k}_2}{t^{m,k}_2 - t^{m,k}_1}\times\frac{1}{t^{m,k}_3-t^{m,k}_1} \right)^{\frac{1}{2}}+\left( \frac{t^{m,k}_2 - t^{m,k}_1}{t^{m,k}_3-t^{m,k}_2}\times\frac{1}{t^{m,k}_3-t^{m,k}_{1}} \right)^{\frac{1}{2}} \right] \]
\[ = \theta_{m,k} \times \sqrt{(t^{m,k}_3-t^{m,k}_2)(t^{m,k}_2- t^{m,k}_1)} \times \left[\sqrt{t^{m,k}_3-t^{m,k}_1} \right]. \]
Note that the value of $\theta_{m,k}$ only depends on the function $x$ and the partition $\pi$. So the result follows.
\end{proof}

\section{Quadratic variation along finitely refining partitions}\label{sec3}
Gantert \cite{gantert1994} provides a formula for the quadratic variation of a function along the dyadic partition in terms of coefficients in the dyadic Faber-Schauder basis. In this section, we generalize these results to any finitely refining sequence of partitions.

\textit{Notation:} For a function $x\in C^0([0,1],\mathbb{R})$ and a sequence of partitions $\pi$ of $[0,1]$, we denote 
\[[x]_{\pi^n}(t) := \sum_{i=0}^{N(\pi^n)-1} \left(x(t^n_{i+1}\wedge t)-x(t^n_i\wedge t)\right)^2,\]
the quadratic variation of $x$ along $\pi$ at level $n$.
\begin{proposition}\label{QV-weighted-schied}
Let $\pi$ be a finitely refining sequence of partitions  of $[0,1]$ with vanishing mesh and $(e^{\pi}_{m,k})$ be the associated Schauder basis. Let $x\in C^0([0,1],\mathbb{R})$ given by 
\[x(t) = x(0)+\big(x(1)-x(0)\big)t+ \sum_{m=0}^{\infty} \sum_{k=0}^{N(\pi^{m+1})-N(\pi^m)-1} \theta_{m,k} e^{\pi}_{m,k}(t).\]
Then the quadratic variation of $x$ along $\pi_n$ is given by:
\[ [x]_{\pi^n}(t) = \sum_{m=0}^{n-1} \sum_{k=0}^{N(\pi^{m+1})-N(\pi^m)-1} a_{m,k}^n(t)\theta_{m,k}^2 +\sum_{m,m'} \sum_{\substack{k,k'\\(m,k)\neq (m',k')}}b^n_{m,k,m',k'}(t)\theta_{m,k}\theta_{m',k'}  .\] 
Denoting by $[t^{m,k}_1,t^{m,k}_3]$ the support of  $e_{m,k}^\pi$,  $t^{m,k}_2$ the point at which it reaches it maximum and $$ \Delta t^n_i= t^n_{i+1}\wedge t - t^n_i \wedge t,$$ we have the following closed form expression for $a_{m,k}^n(t)$ and $b^n_{m,k,m',k'}(t)$:
\[ a_{m,k}^n(t) =\left\{ \left[ \sum_{  t^n_i \in [t^{m,k}_{1}, t^{m,k}_{2}]} (\Delta t^n_i)^2\right]\times \frac{t^{m,k}_{3}-t^{m,k}_{2}}{t^{m,k}_{2}-t^{m,k}_{1}} + \left[ \sum_{ t^n_i \in [t^{m,k}_{2}, t^{m,k}_{3}]} (\Delta t^n_i)^2\right]\times \frac{t^{m,k}_{2}-t^{m,k}_{1}}{t^{m,k}_{3}-t^{m,k}_{2}}\right\} \times \frac{1}{t^{m,k}_{3}-t^{m,k}_{1}}, \]
\[b^n_{m,k,m',k'}(t) = \psi_{m',k'}(t^{m,k}_1)\times \left\{ \frac{ \sum_{  t^n_i \in [t^{m,k}_{1}, t^{m,k}_{2}]} (\Delta t^n_i)^2}{t^{m,k}_{2}-t^{m,k}_{1}} - \frac{\sum_{ t^n_i \in [t^{m,k}_{2}, t^{m,k}_{3}]} (\Delta t^n_i)^2}{t^{m,k}_{3}-t^{m,k}_{2}}\right\} \times \sqrt{\frac{(t^{m,k}_{2}-t^{m,k}_{1})(t^{m,k}_{3}-t^{m,k}_{2})}{t^{m,k}_{3}-t^{m,k}_{1}}} \]
if ${\rm supp}(e^n_{m,k})\subset {\rm supp}(e^n_{m',k'})$ and $b^n_{m,k,m',k'}(t) =0$ otherwise.
\end{proposition}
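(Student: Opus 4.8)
The plan is to expand every increment of $x$ across an interval of $\pi^n$ in the Schauder series, square it, sum over the intervals of $\pi^n$, and sort the resulting double sum into a diagonal part carrying the coefficients $\theta_{m,k}^2$ and an off-diagonal part carrying $\theta_{m,k}\theta_{m',k'}$. As in the proof of Theorem~\ref{coeff_hat_func} we subtract the affine interpolant and assume $x(0)=x(1)=0$, i.e.\ $x=\sum_{m,k}\theta_{m,k}e^\pi_{m,k}$ (the affine part contributes only the lower-order term $(x(1)-x(0))^2\sum_i(\Delta t^n_i)^2$ and a cross term, not displayed). We carry out the computation for $t\in\pi^n$, where the truncation $\wedge t$ plays no role on any interval; for general $t$ the same identity holds up to the contribution of the single interval of $\pi^n$ containing $t$, which differs from $(x(t)-x(t^n_j))^2$ only by terms of order $\omega_x(|\pi^n|)^2$, where $\omega_x$ is the modulus of continuity of $x$, so that the formula still yields $[x]_\pi(t)=\lim_n[x]_{\pi^n}(t)$.

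The engine is a linearisation. For $m\le n-1$ the breakpoints of $e^\pi_{m,k}$ lie in $\pi^{m+1}\subseteq\pi^n$, so $e^\pi_{m,k}$ is affine on each interval of $\pi^n$; writing $\psi_{m,k}(t^n_i)$ for its (constant) value on $[t^n_i,t^n_{i+1})$ --- legitimate since the pieces in \eqref{haar_basis} are half-open --- one has $e^\pi_{m,k}(t^n_{i+1})-e^\pi_{m,k}(t^n_i)=\int_{t^n_i}^{t^n_{i+1}}\psi_{m,k}(s)\,\d s=\psi_{m,k}(t^n_i)\,\Delta t^n_i$, whereas for $m\ge n$ this increment vanishes because $e^\pi_{m,k}$ is zero at points of $\pi^n$. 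Combined with Lemma~\ref{prop.partition.points1} (so that $x=x^n$ on $\pi^n$) this gives $x(t^n_{i+1})-x(t^n_i)=\sum_{m=0}^{n-1}\sum_k\theta_{m,k}\,\psi_{m,k}(t^n_i)\,\Delta t^n_i$. Squaring and summing over $i$,
\[
[x]_{\pi^n}(t)=\sum_{m,m'=0}^{n-1}\sum_{k,k'}\theta_{m,k}\theta_{m',k'}\sum_i\psi_{m,k}(t^n_i)\,\psi_{m',k'}(t^n_i)\,(\Delta t^n_i)^2,
\]
and isolating $(m,k)=(m',k')$ we read off $a^n_{m,k}(t)=\sum_i\psi_{m,k}(t^n_i)^2(\Delta t^n_i)^2$ and, for $(m,k)\ne(m',k')$, $b^n_{m,k,m',k'}(t)=\sum_i\psi_{m,k}(t^n_i)\,\psi_{m',k'}(t^n_i)\,(\Delta t^n_i)^2$, the off-diagonal terms being grouped over unordered pairs via $b^n_{m,k,m',k'}=b^n_{m',k',m,k}$.

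It then remains to put $a^n_{m,k}$ and $b^n_{m,k,m',k'}$ into closed form using the explicit shape \eqref{haar_basis}: $\psi_{m,k}$ is supported on $[t^{m,k}_1,t^{m,k}_3]$, equals $\big(\tfrac{t^{m,k}_3-t^{m,k}_2}{(t^{m,k}_2-t^{m,k}_1)(t^{m,k}_3-t^{m,k}_1)}\big)^{1/2}$ on $[t^{m,k}_1,t^{m,k}_2)$ and $-\big(\tfrac{t^{m,k}_2-t^{m,k}_1}{(t^{m,k}_3-t^{m,k}_2)(t^{m,k}_3-t^{m,k}_1)}\big)^{1/2}$ on $[t^{m,k}_2,t^{m,k}_3)$. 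Substituting $\psi_{m,k}(t^n_i)^2$ on each half and splitting $\sum_i$ over the two blocks $\{t^n_i\in[t^{m,k}_1,t^{m,k}_2]\}$ and $\{t^n_i\in[t^{m,k}_2,t^{m,k}_3]\}$ gives exactly the claimed formula for $a^n_{m,k}$. For $b^n$, $\psi_{m,k}(t^n_i)\psi_{m',k'}(t^n_i)$ is nonzero only where the supports of $e^\pi_{m,k}$ and $e^\pi_{m',k'}$ overlap; by Proposition~\ref{Haar.properties} together with the nesting of the lower levels this forces one support to contain the other, and if $\mathrm{supp}(e^\pi_{m,k})\subset\mathrm{supp}(e^\pi_{m',k'})$ then $\psi_{m',k'}$ is constant on $\mathrm{supp}(\psi_{m,k})$, equal to $\psi_{m',k'}(t^{m,k}_1)$ --- its single breakpoint cannot lie in the interior of $\mathrm{supp}(\psi_{m,k})$. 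Factoring out this constant and substituting the two values of $\psi_{m,k}$ over the same two blocks, then simplifying the radicals, yields the stated expression for $b^n$; and $b^n=0$ when the supports are disjoint.

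The algebra in the last step is routine once the combinatorics is fixed, and that is where the work lies: deciding which $\pi^n$-points fall in which of the two blocks (notably at the endpoints $t^{m,k}_1,t^{m,k}_2,t^{m,k}_3$ themselves), checking from the support structure of the system that $\psi_{m',k'}$ really is constant across $\mathrm{supp}(\psi_{m,k})$ so the single factor $\psi_{m',k'}(t^{m,k}_1)$ is legitimate, and treating the interval straddling $t$ (and the discarded affine part) cleanly. I expect this bookkeeping, not any analytic subtlety, to be the main obstacle.
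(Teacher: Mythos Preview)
Your proposal is correct and follows essentially the same route as the paper: expand each increment $x(t^n_{i+1})-x(t^n_i)$ via the Schauder series, use that $e^\pi_{m,k}$ is affine on intervals of $\pi^n$ for $m\le n-1$ so the increment equals $\psi_{m,k}(t^n_i)\,\Delta t^n_i$, square, sum, and read off the diagonal and off-diagonal coefficients before substituting the explicit values of $\psi_{m,k}$. The paper likewise computes at $t=1$ and defers the general $t$ to the stopped path $x(\cdot\wedge t)$; your remark that the straddling interval contributes only an $\omega_x(|\pi^n|)^2$ correction is a slightly more explicit (and arguably more honest) treatment of that point, but otherwise the arguments coincide.
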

\begin{remark}
Similar to the dyadic case \cite{gantert1994}, the coefficients $a^n_{m,k}$ and $b^n_{m,k,m',k'}$   only depend on the sequence of partitions $\pi$ and not on the path $x\in C^0([0,1],\mathbb{R})$.
\end{remark}
\begin{proof}
We compute $[x]_{\pi^n}(1)$. For $t\in [0,1]$, the calculations are analogously done with the stopped path $x(t\wedge .)$.
\[[x]_{\pi^n}(1)= \sum_{i=0}^{N(\pi^n)-1} \left(x(t^n_{i+1})-x(t^n_{i})\right)^2 =\sum_{i=0}^{N(\pi^n)-1} \left(\sum_{m=0}^{n-1}\sum_{\{k: \psi_{m,k}(t^n_{i})\neq 0\}} \theta_{m,k} \left(e^\pi_{m,k}(t^n_{i+1})-e^\pi_{m,k}(t^n_{i})\right)\right)^2\]
\[=\sum_{i=0}^{N(\pi^n)-1} \left(\sum_{m=0}^{n-1}\sum_{\{k: \psi_{m,k}(t^n_{i})\neq 0\}} \theta_{m,k} \int_{t^n_{i}}^{t^n_{i+1}}\psi_{m,k}(u) du\right)^2\]
\[=\sum_{i=0}^{N(\pi^n)-1} \left(\sum_{m=0}^{n-1}\sum_{\{k: \psi_{m,k}(t^n_{i})\neq 0\}} \theta_{m,k}\times\psi_{m,k}(t^n_{i}) (t^n_{i+1}-t^n_{i}))\right)^2. \]
Since $\pi$ is a finitely refining sequence of partitions, there exists an upper bound  $M$ on the cardinality of the set $$\{ k\geq 1,\quad  \psi_{m,k}(t^n_{i})\neq 0\}$$ for any $m\leq n$. So in the above expression of $[x]_{\pi^n}(1)$ if we look at the coefficient of $\theta^2_{m,k}$ for some pair $(m,k)$ we get:
\[ \sum_{\{i:\psi_{m,k}(t^n_{i}) \neq 0\}}\left[\psi_{m,k}(t^n_{i}) (t^n_{i+1}-t^n_{i})\right]^2 \]
\[=\left\{ \left[ \sum_{\Delta t^n_i \subset [t^{m,k}_{1}, t^{m,k}_{2}]} (\Delta t^n_i)^2\right]\times \frac{t^{m,k}_{3}-t^{m,k}_{2}}{t^{m,k}_{2}-t^{m,k}_{1}} + \left[ \sum_{\Delta t^n_i \subset [t^{m,k}_{2}, t^{m,k}_{3}]} (\Delta t^n_i)^2\right]\times \frac{t^{m,k}_{2}-t^{m,k}_{1}}{t^{m,k}_{3}-t^{m,k}_{2}}\right\} \times \frac{1}{t^{m,k}_{3}-t^{m,k}_{1}}. \]
For two pairs $(m,k)$ and $(m',k')$ if $e^n_{m,k}$ and $e^n_{m',k'}$ have disjoint support then $\psi_{m,k}(t)\psi_{m',k'}(t) = 0$ for all $t$, hence coefficient of $\theta_{m,k}\theta_{m',k'}$ is always zero. 
For two pairs $(m,k)$ and $(m',k')$ with ${\rm supp}(e^n_{m,k})\subset$ ${\rm supp}(e^n_{m',k'})$; $\psi_{m',k'}(t)$ is a non-zero constant for all $t$. This is a consequence of the fact $\{\psi_{m,k}\}$ is orthonormal. Now if we look at the coefficient of $\theta_{m,k}\theta_{m',k'}$ for the case  when ${\rm supp}(e^n_{m,k})\subset {\rm supp}(e^n_{m',k'})$, we get:
\[ \sum_{\{i:\psi_{m,k}(t^n_{i}) \neq 0\}}\left[\psi_{m,k}(t^n_{i}) (t^n_{i+1}-t^n_{i})\right]\times \left[\psi_{m',k'}(t^n_{i}) (t^n_{i+1}-t^n_{i})\right] \]
\[= \sum_{\{i:\psi_{m,k}(t^n_{i}) \neq 0\}}\left[\psi_{m,k}(t^n_{i})\psi_{m',k'}(t^{m,k}_1) \right]\times  (t^n_{i+1}-t^n_{i})^2\]
\[ = \psi_{m',k'}(t^{m,k}_1)\times \left\{ \frac{ \sum_{\Delta t^n_i \subset [t^{m,k}_{1}, t^{m,k}_{2}]} (\Delta t^n_i)^2}{t^{m,k}_{2}-t^{m,k}_{1}} - \frac{\sum_{\Delta t^n_i \subset [t^{m,k}_{2}, t^{m,k}_{3}]} (\Delta t^n_i)^2}{t^{m,k}_{3}-t^{m,k}_{2}}\right\} \times \sqrt{\frac{(t^{m,k}_{2}-t^{m,k}_{1})(t^{m,k}_{3}-t^{m,k}_{2})}{t^{m,k}_{3}-t^{m,k}_{1}}}. \]
So the result follows.
\end{proof}
We say  $x\in C^0([0,1],\mathbb{R})$ has bounded Schauder coefficients  along $\pi$ if
\[  \sup_{m,k}|\eta_{m,k}^\pi(x)|< \infty.\]
\par The class of functions $\mathcal{X}$ defined in \cite{schied2016} provide examples of functions with bounded Schauder coefficients (along the dyadic partitions). The following example is an example of continuous function with bounded Schauder coefficients representation along dyadic partition, but quadratic variation along dyadic partition does not exists \cite{schied2016}.
\begin{example}
Consider the sequence $\{\mathbb{T}^n\}_n$ of dyadic partitions and the continuous function $x\in C^0([0,1],\mathbb{R})$ defined as following:
\[x(t) = \sum_{m=0}^{\infty} \sum_{k=0}^{2^m-1} \theta_{m,k}^\mathbb{T} e^{\mathbb{T}}_{m,k}(t), \quad \text{ where, } \theta_{m,k}^\mathbb{T} = 1+(-1)^m.\]
For the function $x$ defined above we have:
\[ [x]_{\mathbb{T}^{2n}}(t) =\frac{4}{3}t \qquad \text{ and, } \; [x]_{\mathbb{T}^{2n+1}}(t) =\frac{8}{3}t. \]
$\mathbb{T}$ is a finitely refining and balanced sequence of partitions with $\frac{|\mathbb{T}^n|}{|\mathbb{T}^{n+1}|} =\frac{\underline{\mathbb{T}^n}}{\underline{\mathbb{T}^{n+1}}}= 2 $.
\end{example}
\begin{theorem}[Quadratic covariation representation]\label{Quadratic.covar}

Let $\pi$ be a finitely refining sequence of partitions of $[0,1]$ with vanishing  mesh  and $(e^\pi_{m,k})$ be  the  associated  Schauder  basis. Let $x,y\in C^0([0,1],\mathbb{R})\cap Q_\pi([0,1],\mathbb{R})$ with unique  representation
\[x(t) = x(0)+(x(1)-x(0))t+ \sum_{m=0}^{\infty} \sum_{k=0}^{N(\pi^{m+1})-N(\pi^m)-1} \theta_{m,k} e^{\pi}_{m,k}(t), \quad \text{ and, }\]
\[y(t) = y(0)+(y(1)-y(0))t+ \sum_{m=0}^{\infty} \sum_{k=0}^{N(\pi^{m+1})-N(\pi^m)-1} \eta_{m,k} e^{\pi}_{m,k}(t).\]
Then, the quadratic covariation of $x$ and $y$ at level $n$ along the sequence of partitions $\pi$ may be represented as:
\[\quad [x,y]_{\pi^n}(t) = \sum_{m=0}^{n-1} \sum_{k=0}^{N(\pi^{m+1})-N(\pi^m)-1} a_{m,k}^n(t)\theta_{m,k}\eta_{m,k}  +\sum_{m,m'} \sum_{\substack{k,k'\\(m,k)\neq (m',k')}}b^n_{m,k,m',k'}(t)\theta_{m,k}\eta_{m',k'}  .\] 
Denoting by $[t^{m,k}_1,t^{m,k}_3]$ the support of  $e_{m,k}^\pi$,  $t^{m,k}_2$ the point at which it reaches it maximum and $$ \Delta t^n_i= t^n_{i+1}\wedge t - t^n_i \wedge t,$$ we have the following closed form expression for $a_{m,k}^n$ and $b^n_{m,k,m',k'}$.
\[ a_{m,k}^n(t) =\left\{ \left[ \sum_{\Delta t^n_i \subset [t^{m,k}_{1}, t^{m,k}_{2}]} (\Delta t^n_i)^2\right]\times \frac{t^{m,k}_{3}-t^{m,k}_{2}}{t^{m,k}_{2}-t^{m,k}_{1}} + \left[ \sum_{\Delta t^n_i \subset [t^{m,k}_{2}, t^{m,k}_{3}]} (\Delta t^n_i)^2\right]\times \frac{t^{m,k}_{2}-t^{m,k}_{1}}{t^{m,k}_{3}-t^{m,k}_{2}}\right\} \times \frac{1}{t^{m,k}_{3}-t^{m,k}_{1}}, \]
and,
\[b^n_{m,k,m',k'}(t) = \psi_{m',k'}(t^{m,k}_1)\times \left\{ \frac{ \sum_{\Delta t^n_i \subset [t^{m,k}_{1}, t^{m,k}_{2}]} (\Delta t^n_i)^2}{t^{m,k}_{2}-t^{m,k}_{1}} - \frac{\sum_{\Delta t^n_i \subset [t^{m,k}_{2}, t^{m,k}_{3}]} (\Delta t^n_i)^2}{t^{m,k}_{3}-t^{m,k}_{2}}\right\} \times \sqrt{\frac{(t^{m,k}_{2}-t^{m,k}_{1})(t^{m,k}_{3}-t^{m,k}_{2})}{t^{m,k}_{3}-t^{m,k}_{1}}}, \]
if ${\rm supp}(e^n_{m,k})\subset {\rm supp}(e^n_{m',k'})$ and $b^n_{m,k,m',k'}(t) =0$ otherwise.
\end{theorem}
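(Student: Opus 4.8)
The plan is to mirror the proof of Proposition \ref{QV-weighted-schied} almost verbatim, using bilinearity in place of the expansion of a square. The key observation is that Theorem \ref{Quadratic.covar} is not an independent statement but a polarization of Proposition \ref{QV-weighted-schied}: since $x,y\in Q_\pi([0,1],\mathbb{R})$, so is $x+y$ (this is precisely the hypothesis that makes the quadratic covariation well defined, cf. Definition \ref{defn.qv.vector}), and $[x,y]_{\pi^n}(t)=\tfrac12\big([x+y]_{\pi^n}(t)-[x]_{\pi^n}(t)-[y]_{\pi^n}(t)\big)$ at every level $n$ and every $t$. The Schauder coefficients of $x+y$ along $\pi$ are $\theta_{m,k}+\eta_{m,k}$ by uniqueness of the representation (Theorem \ref{coeff_hat_func}), so applying Proposition \ref{QV-weighted-schied} to each of the three paths and using $(\theta+\eta)^2-\theta^2-\eta^2=2\theta\eta$ on the diagonal terms and $(\theta_{m,k}+\eta_{m,k})(\theta_{m',k'}+\eta_{m',k'})-\theta_{m,k}\theta_{m',k'}-\eta_{m,k}\eta_{m',k'}=\theta_{m,k}\eta_{m',k'}+\theta_{m',k'}\eta_{m,k}$ on the off-diagonal terms yields exactly the claimed formula, once one notes that the off-diagonal double sum is symmetric under swapping $(m,k)\leftrightarrow(m',k')$ so the two cross terms recombine into the single sum $\sum b^n_{m,k,m',k'}\theta_{m,k}\eta_{m',k'}$.

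Alternatively — and this is the route I would actually write out, since it is self-contained and no longer than the polarization bookkeeping — one repeats the computation in the proof of Proposition \ref{QV-weighted-schied} directly. First expand, for $t^n_i\in\pi^n$ and using Lemma \ref{prop.partition.points1},
\[
x(t^n_{i+1})-x(t^n_i)=\sum_{m=0}^{n-1}\sum_{\{k:\psi_{m,k}(t^n_i)\neq 0\}}\theta_{m,k}\,\psi_{m,k}(t^n_i)\,(t^n_{i+1}-t^n_i),
\]
and likewise for $y$ with $\eta_{m,k}$ in place of $\theta_{m,k}$. Then
\[
[x,y]_{\pi^n}(1)=\sum_{i=0}^{N(\pi^n)-1}\big(x(t^n_{i+1})-x(t^n_i)\big)\big(y(t^n_{i+1})-y(t^n_i)\big)
\]
expands into a double sum over pairs $(m,k),(m',k')$; by the disjoint-support property (Proposition \ref{Haar.properties}(i)) only pairs whose supports are nested contribute, and the coefficient of $\theta_{m,k}\eta_{m,k}$ is computed exactly as the coefficient of $\theta_{m,k}^2$ was in Proposition \ref{QV-weighted-schied}, giving $a^n_{m,k}(t)$, while the coefficient of $\theta_{m,k}\eta_{m',k'}$ with $\mathrm{supp}(e^n_{m,k})\subset\mathrm{supp}(e^n_{m',k'})$ is computed exactly as the coefficient of $\theta_{m,k}\theta_{m',k'}$ was, giving $b^n_{m,k,m',k'}(t)$. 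For general $t\in[0,1]$ one simply replaces each increment $t^n_{i+1}-t^n_i$ by $\Delta t^n_i=t^n_{i+1}\wedge t-t^n_i\wedge t$ throughout, as in the earlier proof.

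There is essentially no obstacle here: the only point requiring a word of care is that, in the bilinear expansion, the term with $\mathrm{supp}(e^n_{m',k'})\subset\mathrm{supp}(e^n_{m,k})$ (i.e. the roles of the two indices reversed relative to the stated convention) also contributes, and one must check that lumping it into the $(m,k)\neq(m',k')$ double sum with the convention ``$b^n_{m,k,m',k'}=0$ unless $\mathrm{supp}(e^n_{m,k})\subset\mathrm{supp}(e^n_{m',k'})$'' is consistent — which it is, because in that reversed case it appears instead as the term indexed by $(m',k'),(m,k)$, whose weight $b^n_{m',k',m,k}(t)$ is nonzero and multiplies $\theta_{m',k'}\eta_{m,k}$. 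Thus every ordered pair of nested-support indices is counted exactly once, matching the right-hand side. I would therefore present the direct computation, noting at the end that the diagonal case $x=y$ recovers Proposition \ref{QV-weighted-schied}.
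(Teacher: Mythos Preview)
Your proposal is correct and matches the paper's approach: the paper's proof consists of the single sentence ``The proof is similar to that of Theorem \ref{QV-weighted-schied},'' which is exactly your second (direct-computation) route. Your polarization argument is an equally valid alternative, and your remark about how the ordered off-diagonal pairs are accounted for is a useful clarification that the paper itself does not spell out.
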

\begin{proof}
The proof is  similar to that of Theorem \ref{QV-weighted-schied}.
\end{proof}
We will now derive some bounds on the coefficients $a^n_{m,k}$\footnote{throughout the rest of the paper in some places we wrote $a^n_{m,k}$ for $a^n_{m,k}(1)$} and $b^n_{m,k,m',k'}$\footnote{similarly we wrote $b^n_{m,k,m',k'}$ for $b^n_{m,k,m',k'}(1)$}  which appear in the expression of quadratic variation   in Theorem \ref{QV-weighted-schied}.
\begin{proposition}\label{boundOn_ab}
If $\pi$ is a finitely refining sequence of partitions of $[0,1]$ then 
\[0\leq \underline{\pi^n} \leq a^n_{m,k}\leq |\pi^n|.\]
If we also assume the sequence of partitions $\pi$ is balanced, then there exists $C>0$ such that  
\[   {\rm supp}(e^n_{m,k})\subset{\rm supp}(e^n_{m',k'})\quad \Rightarrow \; 0\leq |b^n_{m,k,m',k'}|\leq C(|\pi^n|-\underline{\pi^n})\sqrt{\frac{|\pi^m|}{|\pi^{m'}|}}.\]
If ${\rm supp}(e^n_{m,k})\cap{\rm supp}(e^n_{m',k'})=\emptyset$ then $b^n_{m,k,m',k'} = 0$. 
\end{proposition}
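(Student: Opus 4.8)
The plan is to unpack the closed-form expressions for $a^n_{m,k}$ and $b^n_{m,k,m',k'}$ from Proposition~\ref{QV-weighted-schied} and estimate each factor separately. For the bound on $a^n_{m,k}$, recall that
\[
a_{m,k}^n=\left\{ \Big[ \sum_{\Delta t^n_i \subset [t^{m,k}_{1}, t^{m,k}_{2}]} (\Delta t^n_i)^2\Big]\frac{t^{m,k}_{3}-t^{m,k}_{2}}{t^{m,k}_{2}-t^{m,k}_{1}} + \Big[ \sum_{\Delta t^n_i \subset [t^{m,k}_{2}, t^{m,k}_{3}]} (\Delta t^n_i)^2\Big]\frac{t^{m,k}_{2}-t^{m,k}_{1}}{t^{m,k}_{3}-t^{m,k}_{2}}\right\}\frac{1}{t^{m,k}_{3}-t^{m,k}_{1}}.
\]
The key observation is that $\sum (\Delta t^n_i)^2 \le |\pi^n|\sum \Delta t^n_i$ over any collection of consecutive intervals, and similarly $\sum(\Delta t^n_i)^2 \ge \underline{\pi^n}\sum\Delta t^n_i$. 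Applied to the two sums above, with $\sum_{[t_1,t_2]}\Delta t^n_i = t^{m,k}_2-t^{m,k}_1$ and $\sum_{[t_2,t_3]}\Delta t^n_i = t^{m,k}_3-t^{m,k}_2$ (valid since $t^{m,k}_1,t^{m,k}_2,t^{m,k}_3\in\pi^{m+1}\subseteq\pi^n$ for $n\ge m+1$), the two ratio factors telescope: the bracket becomes at most $|\pi^n|\big[(t^{m,k}_3-t^{m,k}_2)+(t^{m,k}_2-t^{m,k}_1)\big] = |\pi^n|(t^{m,k}_3-t^{m,k}_1)$, and dividing by $t^{m,k}_3-t^{m,k}_1$ gives $a^n_{m,k}\le|\pi^n|$. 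The same computation with $\underline{\pi^n}$ in place of $|\pi^n|$ gives the lower bound $a^n_{m,k}\ge\underline{\pi^n}$, and nonnegativity is immediate.

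For $b^n_{m,k,m',k'}$, I would bound the three factors in
\[
b^n_{m,k,m',k'} = \psi_{m',k'}(t^{m,k}_1)\left\{ \frac{ \sum_{[t^{m,k}_{1}, t^{m,k}_{2}]} (\Delta t^n_i)^2}{t^{m,k}_{2}-t^{m,k}_{1}} - \frac{\sum_{[t^{m,k}_{2}, t^{m,k}_{3}]} (\Delta t^n_i)^2}{t^{m,k}_{3}-t^{m,k}_{2}}\right\} \sqrt{\frac{(t^{m,k}_{2}-t^{m,k}_{1})(t^{m,k}_{3}-t^{m,k}_{2})}{t^{m,k}_{3}-t^{m,k}_{1}}}
\]
in turn. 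Each of the two quotients in the braces lies in $[\underline{\pi^n},|\pi^n|]$ by the same $\sum(\Delta t^n_i)^2$ estimate as above (dividing $\underline{\pi^n}\cdot\ell \le \sum(\Delta t^n_i)^2 \le |\pi^n|\cdot\ell$ by the length $\ell$), so their difference is bounded in absolute value by $|\pi^n|-\underline{\pi^n}$. For the square-root factor, I would use that the support of $e^\pi_{m,k}$ is a union of at most $M$ intervals of $\pi^{m+1}$ so its length $t^{m,k}_3-t^{m,k}_1$ is of order $|\pi^m|$ — more precisely, the balanced hypothesis \eqref{eq.balance} gives $t^{m,k}_3-t^{m,k}_1 \le M|\pi^{m+1}| \le Mc\,\underline{\pi^m}$ — while $\sqrt{(t^{m,k}_2-t^{m,k}_1)(t^{m,k}_3-t^{m,k}_2)/(t^{m,k}_3-t^{m,k}_1)} \le \sqrt{t^{m,k}_3-t^{m,k}_1}$, hence this factor is $O(\sqrt{|\pi^m|})$. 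Finally, $|\psi_{m',k'}(t^{m,k}_1)|$ is the height of a normalized Haar function on an interval of length comparable to $|\pi^{m'}|$: from the explicit formula \eqref{haar_basis} and the balanced condition, $|\psi_{m',k'}|\le C/\sqrt{|\pi^{m'}|}$. Multiplying the three bounds yields $|b^n_{m,k,m',k'}|\le C(|\pi^n|-\underline{\pi^n})\sqrt{|\pi^m|/|\pi^{m'}|}$, with a constant $C$ depending only on $M$ and on $c$ from \eqref{eq.balance}. The vanishing of $b^n_{m,k,m',k'}$ under disjoint supports is already recorded in Proposition~\ref{QV-weighted-schied}.

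The main obstacle is the bookkeeping around the Haar-function heights and support lengths: one must check that "an interval of $\pi^{m+1}$" and "an interval of $\pi^{m'}$" can be compared to $|\pi^m|$ and $|\pi^{m'}|$ respectively \emph{uniformly}, and this is exactly where both the finitely-refining bound $M$ (controlling how many sub-intervals make up a support) and the balanced constant $c$ (controlling the ratio $|\pi^n|/\underline{\pi^n}$, and hence turning bounds on the smallest interval into bounds on the largest) are needed. Getting the powers of $|\pi^m|$ versus $|\pi^{m'}|$ to land correctly — one factor $\sqrt{|\pi^m|}$ coming from the geometric-mean term over the support of $e^\pi_{m,k}$, one factor $1/\sqrt{|\pi^{m'}|}$ from the Haar height $\psi_{m',k'}$ — is the delicate point; everything else is the elementary inequality $\underline{\pi^n}\,\ell\le\sum(\Delta t^n_i)^2\le|\pi^n|\,\ell$ applied repeatedly.
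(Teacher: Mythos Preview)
Your proposal is correct and follows essentially the same route as the paper: both arguments reduce everything to the elementary sandwich $\underline{\pi^n}\,\ell\le\sum(\Delta t^n_i)^2\le|\pi^n|\,\ell$, use it to collapse the $a^n_{m,k}$ expression to the interval $[\underline{\pi^n},|\pi^n|]$ and to bound the braced difference in $b^n_{m,k,m',k'}$ by $|\pi^n|-\underline{\pi^n}$, and then invoke the balanced hypothesis to control $|\psi_{m',k'}|\le C/\sqrt{|\pi^{m'}|}$ and the geometric-mean factor by $C\sqrt{|\pi^m|}$. One minor simplification: you do not need the finitely-refining bound $M$ to estimate $t^{m,k}_3-t^{m,k}_1$, since the support of $e^\pi_{m,k}$ is contained in a single interval $[t^m_k,t^m_{k+1}]$ of $\pi^m$ and hence has length at most $|\pi^m|$ directly.
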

\begin{proof}From Theorem \ref{QV-weighted-schied} we have the expression of $a^n_{m,k}$ as follows:
\[ a_{m,k}^n =\left\{ \left[ \sum_{\Delta t^n_i \subset [t^{m,k}_{1}, t^{m,k}_{2}]} (\Delta t^n_i)^2\right]\times \frac{t^{m,k}_{3}-t^{m,k}_{2}}{t^{m,k}_{2}-t^{m,k}_{1}} + \left[ \sum_{\Delta t^n_i \subset [t^{m,k}_{2}, t^{m,k}_{3}]} (\Delta t^n_i)^2\right]\times \frac{t^{m,k}_{2}-t^{m,k}_{1}}{t^{m,k}_{3}-t^{m,k}_{2}}\right\} \times \frac{1}{t^{m,k}_{3}-t^{m,k}_{1}}.\]
Since $\underline{\pi^n}\times(\Delta t^n_i)\leq (\Delta t^n_i)^2 \leq |\pi^n|\times(\Delta t^n_i)$, for all $m,k$  we can bound $a_{m,k}^n$ as follows.
\[a_{m,k}^n \leq |\pi^n|\left[\left\{ \left[ \sum_{\Delta t^n_i \subset [t^{m,k}_{1}, t^{m,k}_{2}]} \Delta t^n_i\right] \frac{t^{m,k}_{3}-t^{m,k}_{2}}{t^{m,k}_{2}-t^{m,k}_{1}} + \left[ \sum_{\Delta t^n_i \subset [t^{m,k}_{2}, t^{m,k}_{3}]} \Delta t^n_i\right] \frac{t^{m,k}_{2}-t^{m,k}_{1}}{t^{m,k}_{3}-t^{m,k}_{2}}\right\} \times \frac{1}{t^{m,k}_{3}-t^{m,k}_{1}} \right] \]
\[=|\pi^n|\times\left[(t^{m,k}_{3}-t^{m,k}_{2})+(t^{m,k}_{2}-t^{m,k}_{1}) \right]\frac{1}{t^{m,k}_{3}-t^{m,k}_{1}} = |\pi^n|. \]
Similarly using the other side of the inequality, we get for all $n,m,k$: $\underline{\pi^n}\leq a_{m,k}^n\leq |\pi^n|.$ So the first part of the result follows. For the second part of the proposition, for any $(m,k)$, under the balanced assumption on $\pi$, we have $|\psi_{m,k}(t)|\leq C_1\sqrt{\frac{1}{|\pi^m|}}$. So under balanced assumption:
\[|b^n_{m,k,m',k'}| = \left|\psi_{m',k'}(t^{m,k}_1)\right|\times \left| \frac{ \sum_{\Delta t^n_i \subset [t^{m,k}_{1}, t^{m,k}_{2}]} (\Delta t^n_i)^2}{t^{m,k}_{2}-t^{m,k}_{1}} - \frac{\sum_{\Delta t^n_i \subset [t^{m,k}_{2}, t^{m,k}_{3}]} (\Delta t^n_i)^2}{t^{m,k}_{3}-t^{m,k}_{2}}\right| \times \sqrt{\frac{(t^{m,k}_{2}-t^{m,k}_{1})(t^{m,k}_{3}-t^{m,k}_{2})}{t^{m,k}_{3}-t^{m,k}_{1}}} \]
\[\leq C_2\sqrt{\frac{1}{|\pi^{m'}|}} \left| \frac{ \sum_{\Delta t^n_i \subset [t^{m,k}_{1}, t^{m,k}_{2}]} (\Delta t^n_i)^2}{t^{m,k}_{2}-t^{m,k}_{1}} - \frac{\sum_{\Delta t^n_i \subset [t^{m,k}_{2}, t^{m,k}_{3}]} (\Delta t^n_i)^2}{t^{m,k}_{3}-t^{m,k}_{2}}\right| \times \sqrt{|\pi^m|}. \]
Since $\underline{\pi^n}\times(\Delta t^n_i)\leq (\Delta t^n_i)^2 \leq |\pi^n|\times(\Delta t^n_i)$, for all $(m,k)\neq (m',k')$ we can bound $|b_{m,k,m',k'} ^n|$ as follows.
\[|b_{m,k,m',k'}^n|\leq C_3\sqrt{\frac{1}{|\pi^{m'}|}} \left| \frac{ |\pi^n|\sum_{\Delta t^n_i \subset [t^{m,k}_{1}, t^{m,k}_{2}]} (\Delta t^n_i)}{t^{m,k}_{2}-t^{m,k}_{1}} - \frac{\underline{\pi^n}\sum_{\Delta t^n_i \subset [t^{m,k}_{2}, t^{m,k}_{3}]} (\Delta t^n_i)}{t^{m,k}_{3}-t^{m,k}_{2}}\right| \times \sqrt{|\pi^m|} \]
\[\leq C(|\pi^n|-\underline{\pi^n})\sqrt{\frac{|\pi^m|}{|\pi^{m'}|}}. \]

\end{proof}
As a consequence of Proposition \ref{boundOn_ab}, for any uniform partition (such as dyadic  partition), $b^n_{m,k,m',k'} = 0$ for all $m,k,m',k',n\geq 0$.  But since $b^n_{m,k,m',k'}$ are not necessarily positive, if the individual $b^n_{m,k,m',k'}$ are not equal to zero, still $\sum_{m,k,m',k'} b_{m,k,m',k'}^n$ can converge to $0$ as $n\to \infty$.

\begin{lemma}\label{almost.unif}
Consider a balanced finitely refining sequence $\pi$ of partitions  satisfying $t^n_{i+1}-t^n_i = \frac{1}{N(\pi^n)}(1+\epsilon^n_i)$ with $\sup_i |\epsilon^n_i| = o(\frac{1}{n})$ for all $n\geq 1$. Then  any function $x\in C^0([0,1],\mathbb{R})$ with bounded Schauder representation  
\[x(t) =x(0)+(x(1)-x(0))t+ \sum_{m=0}^{\infty} \sum_{k=0}^{N(\pi^{m+1})-N(\pi^m)-1} \theta_{m,k} e^{\pi}_{m,k}(t),\]
we have
\[ [x]_{\pi^n}(t)= \sum_{m=0}^{n-1} \sum_{k=0}^{N(\pi^{m+1})-N(\pi^m)-1} a_{m,k}^n(t)\theta_{m,k}^2. \] 
If $[t^{m,k}_1,t^{m,k}_3]$ the support of  $e_{m,k}^\pi$,  $t^{m,k}_2$ the point at which it reaches it maximum and $$ \Delta t^n_i= t^n_{i+1}\wedge t - t^n_i \wedge t,$$ then: 
\[ a_{m,k}^n(t) =\left\{ \left[ \sum_{\Delta t^n_i \subset [t^{m,k}_{1}, t^{m,k}_{2}]} (\Delta t^n_i)^2\right]\times \frac{t^{m,k}_{3}-t^{m,k}_{2}}{t^{m,k}_{2}-t^{m,k}_{1}} + \left[ \sum_{\Delta t^n_i \subset [t^{m,k}_{2}, t^{m,k}_{3}]} (\Delta t^n_i)^2\right]\times \frac{t^{m,k}_{2}-t^{m,k}_{1}}{t^{m,k}_{3}-t^{m,k}_{2}}\right\} \times \frac{1}{t^{m,k}_{3}-t^{m,k}_{1}}. \]
\end{lemma}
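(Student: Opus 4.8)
The plan is to start from the exact expansion of $[x]_{\pi^n}(t)$ furnished by Proposition~\ref{QV-weighted-schied} (equivalently Theorem~\ref{Quadratic.covar} with $y=x$), namely
\[
[x]_{\pi^n}(t)=\sum_{m=0}^{n-1}\sum_{k=0}^{N(\pi^{m+1})-N(\pi^m)-1}a_{m,k}^n(t)\,\theta_{m,k}^2+E_n(t),
\qquad
E_n(t):=\sum_{m,m'}\ \sum_{\substack{k,k'\\(m,k)\neq(m',k')}}b^n_{m,k,m',k'}(t)\,\theta_{m,k}\theta_{m',k'},
\]
and then to show that $\sup_{t\in[0,1]}|E_n(t)|\to 0$ as $n\to\infty$; this makes the cross terms disappear and yields the stated identity (understood up to a contribution which vanishes as $n\to\infty$). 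I will argue for $t=1$ and treat general $t$ by the same computation applied to the stopped path $x(\cdot\wedge t)$, the only difference being one partial interval at $t$ contributing an extra $O(|\pi^n|^2)$ term, absorbed by the bounds below. Write $\Theta:=\sup_{m,k}|\theta_{m,k}|<\infty$ (bounded Schauder coefficients), $h_n:=1/N(\pi^n)$ and $\delta_n:=\sup_i|\epsilon^n_i|=o(1/n)$.

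The first step is a near-uniformity bound on $b^n_{m,k,m',k'}(t)$. By Proposition~\ref{boundOn_ab} one has $b^n_{m,k,m',k'}(t)=0$ unless ${\rm supp}(e^\pi_{m,k})\subseteq{\rm supp}(e^\pi_{m',k'})$, so only ``nested'' pairs contribute, and then necessarily $m'\le m$. Since $t^{m,k}_1,t^{m,k}_2,t^{m,k}_3\in\pi^{m+1}\subseteq\pi^n$, the sums $\sum_{\Delta t^n_i\subset[a,b]}\Delta t^n_i$ over such endpoints telescope to $b-a$; together with the identity $(\Delta t^n_i)^2=h_n(1+\epsilon^n_i)\,\Delta t^n_i$ this gives
\[
\frac{\sum_{\Delta t^n_i\subset[t^{m,k}_1,t^{m,k}_2]}(\Delta t^n_i)^2}{t^{m,k}_2-t^{m,k}_1}=h_n+h_nR^n_{12},\qquad |R^n_{12}|\le\delta_n,
\]
and similarly for the $[t^{m,k}_2,t^{m,k}_3]$ term, so the bracket in the formula for $b^n_{m,k,m',k'}$ equals $h_n(R^n_{12}-R^n_{23})$, of modulus at most $2h_n\delta_n$. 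Combining with the balanced estimate $|\psi_{m',k'}(t^{m,k}_1)|\le C_1|\pi^{m'}|^{-1/2}$, the elementary inequality $\sqrt{(t^{m,k}_2-t^{m,k}_1)(t^{m,k}_3-t^{m,k}_2)/(t^{m,k}_3-t^{m,k}_1)}\le\sqrt{|\pi^m|}$, and the fact that the mesh of a refining sequence is non-increasing (so $|\pi^m|\le|\pi^{m'}|$ for $m'\le m$), I obtain the uniform bound $|b^n_{m,k,m',k'}(t)|\le 2C_1\,\delta_n h_n$ over all contributing pairs with $m\le n-1$.

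The second step is a count of the contributing pairs. Since $\pi$ is finitely refining, ${\rm supp}(e^\pi_{m,k})$ lies inside a single interval of each coarser partition $\pi^{m'}$, $m'\le m$, inside which there are at most $M$ Haar functions at level $m'$, with supports totally ordered by inclusion (Proposition~\ref{Haar.properties}(i)--(ii)); moreover no function at a level $m'>m$ can have support containing ${\rm supp}(e^\pi_{m,k})$, because that support has a point of $\pi^{m+1}\subseteq\pi^{m'}$ in its interior whereas the interior of ${\rm supp}(e^\pi_{m',k'})$ contains no point of $\pi^{m'}$. Hence each $(m,k)$ lies in the support of at most $M(m+1)$ other basis functions, so
\[
|E_n(t)|\le 2C_1 M\,\Theta^2\,\delta_n h_n\sum_{m=0}^{n-1}(m+1)\big(N(\pi^{m+1})-N(\pi^m)\big)\le 2C_1 M\,\Theta^2\,\delta_n h_n\cdot nN(\pi^n)=2C_1 M\,\Theta^2\, n\delta_n,
\]
using $\sum_{m=0}^{n-1}(N(\pi^{m+1})-N(\pi^m))=N(\pi^n)-N(\pi^0)\le N(\pi^n)$ and $h_nN(\pi^n)=1$. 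Since $\delta_n=o(1/n)$, the right-hand side tends to $0$, and the claim follows.

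The delicate point I expect is making the two estimates balance: the number of nested pairs is of order $nN(\pi^n)$, so the per-pair bound on $b^n$ must be $o\big(1/(nN(\pi^n))\big)$; the telescoping argument produces exactly $O(\delta_n/N(\pi^n))$, so the hypothesis $\sup_i|\epsilon^n_i|=o(1/n)$ is used in an essentially sharp way, and one must take care not to lose a stray factor $|\pi^m|/|\pi^{m'}|$ in the bound on $b^n$ — which is precisely where monotonicity of the mesh under refinement and the balanced property enter.
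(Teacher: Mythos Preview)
Your proof is correct and follows essentially the same approach as the paper: both start from the exact expansion in Proposition~\ref{QV-weighted-schied}, bound $|\psi_{m',k'}|\sqrt{(t^{m,k}_2-t^{m,k}_1)(t^{m,k}_3-t^{m,k}_2)/(t^{m,k}_3-t^{m,k}_1)}$ by $C_1\sqrt{|\pi^m|/|\pi^{m'}|}\le C_1$, count at most $M(m+1)$ nested pairs per $(m,k)$, and finish with $\sum_{m=0}^{n-1}(m+1)\big(N(\pi^{m+1})-N(\pi^m)\big)\le nN(\pi^n)$. Your treatment of the bracket term via the exact identity $(\Delta t^n_i)^2=h_n(1+\epsilon^n_i)\Delta t^n_i$ and weighted averaging is in fact cleaner than the paper's, which bounds it more crudely by $|\pi^n|-\underline{\pi^n}$; otherwise the two arguments are identical.
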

\textit{Note:} The above assumption   is true for any uniform partition $\pi$, say dyadic or triadic partition as in this case $b^n_{m,k,m',k'} = 0$ for all $m,m',k,k'$. But Lemma \ref{almost.unif} does not require having $b^n_{m,k,m',k'} = 0$.
\begin{proof}
We compute $[x]_{\pi^n}(1)$. For $t \in [0,1]$, the calculations are analogously done with the
stopped path $x(t \wedge.)$. \par For any pair $(m,k)$, under the balanced assumption on $\pi$ we have; $|\psi_{m,k}(t)|\leq C_1\sqrt{\frac{1}{|\pi^m|}}$, where constant $C_1$ is independent of $m$ and $k$. We will show that the second term on the quadratic variation formula in Theorem \ref{QV-weighted-schied}: $\sum_{m,m'} \sum_{\substack{k,k'\\(m,k)\neq (m',k')}}b^n_{m,k,m',k'}\theta_{m,k}\theta_{m',k'}$ goes to $0$ as $n\to \infty$.
From the construction of $b^n_{m,k,m',k'}$ we know that if support of 
 $e^n_{m,k}$ and support of $e^n_{m',k'}$ are disjoint, then: $b^n_{m,k,m',k'}=0$. So,
 $$\sum_{m,m'} \sum_{\substack{k,k'\\(m,k)\neq (m',k')}}b^n_{m,k,m',k'}\theta_{m,k}\theta_{m',k'}$$
\[=\sum_{m=0}^{n-1}\sum_{k=0}^{N(\pi^{m+1})-N(\pi^m)-1}\sum_{m'=0}^{m} \sum_{\substack{k':\text{ Support of }e^n_{m,k} \subset e^n_{m',k'} \\(m,k)\neq (m',k')}}b^n_{m,k,m',k'}\theta_{m,k}\theta_{m',k'} \]
\[\leq M\sum_{m=0}^{n-1}\sum_{k=0}^{N(\pi^{m+1})-N(\pi^m)-1}\sum_{m'=0}^{m}|\theta_{m,k}\theta_{m',k'(.)}| \times\psi_{m',k'(.)}(t^{m,k}_1)\times \sqrt{\frac{(t^{m,k}_{2}-t^{m,k}_{1})(t^{m,k}_{3}-t^{m,k}_{2})}{t^{m,k}_{3}-t^{m,k}_{1}}}\]
\[\times \left| \frac{ \sum_{\Delta t^n_i \subset [t^{m,k}_{1}, t^{m,k}_{2}]} (\Delta t^n_i)^2}{t^{m,k}_{2}-t^{m,k}_{1}} - \frac{\sum_{\Delta t^n_i \subset [t^{m,k}_{2}, t^{m,k}_{3}]} (\Delta t^n_i)^2}{t^{m,k}_{3}-t^{m,k}_{2}}\right| \]
under the balanced assumption on $\pi$:
\[\leq C_2\sum_{m=0}^{n-1}\sum_{k=0}^{N(\pi^{m+1})-N(\pi^m)-1}\sum_{m'=0}^{m}|\theta_{m,k}\theta_{m'k'(.)}| \times\sqrt{\frac{1}{|\pi^{m'}|}} \times \sqrt{|\pi^m|}\]
\[\times \left| \frac{ \sum_{\Delta t^n_i \subset [t^{m,k}_{1}, t^{m,k}_{2}]} (\Delta t^n_i)^2}{t^{m,k}_{2}-t^{m,k}_{1}} - \frac{\sum_{\Delta t^n_i \subset [t^{m,k}_{2}, t^{m,k}_{3}]} (\Delta t^n_i)^2}{t^{m,k}_{3}-t^{m,k}_{2}}\right| \]

\[\leq C_3\sum_{m=0}^{n-1}\sum_{k=0}^{N(\pi^{m+1})-N(\pi^m)-1}\sum_{m'=0}^{m} \left| \frac{ \sum_{\Delta t^n_i \subset [t^{m,k}_{1}, t^{m,k}_{2}]} (\Delta t^n_i)^2}{t^{m,k}_{2}-t^{m,k}_{1}} - \frac{\sum_{\Delta t^n_i \subset [t^{m,k}_{2}, t^{m,k}_{3}]} (\Delta t^n_i)^2}{t^{m,k}_{3}-t^{m,k}_{2}}\right|. \]
The last inequality follows from the fact that $x$ has a bounded Schauder basis representation along a refining sequence of partitions $\pi$ and $\sqrt{\frac{|\pi^m|}{|\pi^{m'}|}}\leq 1 $ for all $m'\leq m$. So the above inequality will reduce as following:
\[\leq C_3\sum_{m=0}^{n-1}\sum_{k=0}^{N(\pi^{m+1})-N(\pi^m)-1}\sum_{m'=0}^{m} \left| \frac{|\pi^n| \sum_{\Delta t^n_i \subset [t^{m,k}_{1}, t^{m,k}_{2}]} (\Delta t^n_i)}{t^{m,k}_{2}-t^{m,k}_{1}} - \frac{\underline{\pi^n}\sum_{\Delta t^n_i \subset [t^{m,k}_{2}, t^{m,k}_{3}]} (\Delta t^n_i)}{t^{m,k}_{3}-t^{m,k}_{2}}\right| \]
\[= C_3\sum_{m=0}^{n-1}\sum_{k=0}^{N(\pi^{m+1})-N(\pi^m)-1}\sum_{m'=0}^{m} \left| |\pi^n| - \underline{\pi^n} \right| \leq C_4\sum_{m=0}^{n-1}\sum_{k=0}^{N(\pi^{m+1})-N(\pi^m)-1}\sum_{m'=0}^{m} \left| \frac{1}{N(\pi^n)} \sup_i |\epsilon^n_i| \right| \]
\[\leq C_4 \left| \frac{1}{N(\pi^n)} \sup_i |\epsilon^n_i| \right|\sum_{m=0}^{n-1} (m+1)\left[N(\pi^{m+1})-N(\pi^m)\right] \leq C_5\times n \sup_i |\epsilon^n_i|\to 0.\]
So the lemma follows.
\end{proof}

\section{Processes with prescribed quadratic variation along a finitely refining partition sequence}\label{Extended.BM.Schied}

\subsection{Processes with linear quadratic variation}
 A well-known example of process with linear quadratic variation i.e. constant quadratic variation per unit time is Brownian motion, which satisfies this property almost surely along any refining partition.
Schied \cite{schied2016} provided a subclass $\mathcal{X}$ of $Q_{\mathbb{T}}([0,1],\mathbb{R})$, such that 
for all $x\in \mathcal{X}$, the  quadratic variation along the dyadic partition is  $[x]_{\mathbb{T}}(t) = t$. 
 However  Brownian motion is not included in the class $\mathcal{X}$ given in \cite{schied2016}. 
 
In this subsection, for any fixed finitely refining sequence of partitions $\pi$, we construct a class $\mathcal{B}^\pi$ of processes with  linear  quadratic variation along  $\pi$ and we show that Brownian motion belongs to $\mathcal{B}^\pi$. With some additional conditions on the sequence of partitions, we also provide an almost sure convergence result. The class $\mathcal{X}$ defined in  \cite{schied2016} has a non-empty intersection with  $\mathcal{B}^\mathbb{T}$.

\par Let $W$ be a Wiener process on a probability space $(\Omega, {\cal F},\mathbb{P})$, which we take to be the canonical Wiener space without loss of generality i.e. $\Omega=C^0([0,T],\mathbb{R}), W(t,\omega)=\omega(t)$.
For finitely refining sequence of partitions $\pi$ of $[0,1]$, the quadratic variation of $W$ along $\pi$ is linear almost surely, i.e. $\forall t\in [0,1]$,\; $\mathbb{P}([W]_{\pi}(t)=t)=1$ \cite{levy1940,levy1965}. On the other hand, $W$ can also be represented in terms of its  Schauder expansion along $\pi$, which provides the following properties of the coefficient.
\begin{lemma}\label{Brownian.coeff}
Let $\pi$ be a finitely refining sequence of partitions and $W$ be a  Brownian motion. Then $W$ has the following    Schauder   expansion along the partition sequence $\pi$:
\[W(t) =W(0)+ (W(1)-W(0))t + \sum_{m=0}^{\infty}\sum_{k=1}^{N(\pi^{m+1})-N(\pi^m)}\eta_{mk}e^{\pi}_{m,k}(t), \]
where $\eta_{m,k}\sim^{IID} N(0,1)$ are independent and identically distributed.
\end{lemma}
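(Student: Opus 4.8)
The plan is to derive Lemma~\ref{Brownian.coeff} from the orthonormality of the non-uniform Haar system (Proposition~\ref{Haar.properties}) by recognising the Schauder coefficients of $W$ as Wiener integrals against the Haar functions --- the non-uniform analogue of the L\'evy--Ciesielski construction. First I would note that almost every path of $W$ belongs to $C^0([0,1],\mathbb{R})$, so Theorem~\ref{coeff_hat_func}, together with the uniform convergence of the Schauder series for continuous functions recorded after Lemma~\ref{prop.partition.points1}, applies pathwise: $W$ admits the asserted expansion almost surely, and by uniqueness of the representation the coefficient $\eta_{m,k}$ must equal the closed form \eqref{eq.theta.coeff} evaluated at $W$, with $[t^{m,k}_1,t^{m,k}_3]$ the support of $e^\pi_{m,k}$ and $t^{m,k}_2$ the point at which it attains its maximum. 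In particular $\eta_{m,k}$ is a deterministic linear combination of the two increments $W(t^{m,k}_2)-W(t^{m,k}_1)$ and $W(t^{m,k}_3)-W(t^{m,k}_2)$, so it only remains to identify the joint law of the family $(\eta_{m,k})_{m,k}$.

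The key step is to check that
\[
\eta_{m,k} \;=\; \int_0^1 \psi_{m,k}(s)\,dW(s).
\]
This is a short algebraic verification: comparing the coefficient of $W(t^{m,k}_2)-W(t^{m,k}_1)$ and that of $W(t^{m,k}_3)-W(t^{m,k}_2)$ in \eqref{eq.theta.coeff} with the two non-zero values taken by $\psi_{m,k}$ in \eqref{haar_basis}, one finds that they agree. Since each $\psi_{m,k}$ is a step function, the right-hand side is literally a finite linear combination of independent Gaussian increments of $W$ over the subintervals on which $\psi_{m,k}$ is constant; hence every finite sub-family of $(\eta_{m,k})_{m,k}$ is a Gaussian vector, so the whole family is jointly Gaussian. (The constant function, orthogonal to every $\psi_{m,k}$ by Proposition~\ref{Haar.properties}(iii), accounts for the affine term $W(1)-W(0)$ of the expansion, and is likewise independent of all the $\eta_{m,k}$.)

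It then suffices to compute the first two moments. Using $\mathbb{E}[(W(b)-W(a))(W(d)-W(c))]=|[a,b]\cap[c,d]|$ one obtains, for step functions $\phi,\psi$, the elementary identity $\mathbb{E}\!\left[\big(\int_0^1\phi\,dW\big)\big(\int_0^1\psi\,dW\big)\right]=\int_0^1\phi(s)\psi(s)\,ds$; combining this with the orthonormality of $\{\psi_{m,k}\}$ from Proposition~\ref{Haar.properties}(iii)--(iv) yields $\mathbb{E}[\eta_{m,k}]=0$ and $\mathbb{E}[\eta_{m,k}\eta_{m',k'}]=\langle\psi_{m,k},\psi_{m',k'}\rangle_{L^2([0,1])}=\mathbbm{1}_{m,m'}\mathbbm{1}_{k,k'}$. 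A centred, jointly Gaussian family with identity covariance is an i.i.d.\ $N(0,1)$ family, which is the claim. Equivalently one may bypass the integral notation and read off $\mathrm{Var}(\eta_{m,k})=1$ and $\mathrm{Cov}(\eta_{m,k},\eta_{m',k'})=0$ directly from \eqref{eq.theta.coeff}: disjoint supports give independent increments, while for nested supports the coarser Haar function is constant on the support of the finer one, which reduces the covariance to the same orthogonality relation. There is no genuine analytic obstacle here, since the Haar functions are piecewise constant and the convergence of the series is already furnished by Theorem~\ref{coeff_hat_func}; the only point requiring care is the algebraic identification of \eqref{eq.theta.coeff} with $\int_0^1\psi_{m,k}\,dW$, after which Proposition~\ref{Haar.properties} does all the work.
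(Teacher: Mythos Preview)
Your argument is correct. The identification $\eta_{m,k}=\int_0^1\psi_{m,k}\,dW$ checks out against \eqref{eq.theta.coeff} and \eqref{haar_basis}, and once it is in hand the orthonormality of Proposition~\ref{Haar.properties} together with the It\^o isometry for step integrands gives the covariance structure in one line; joint Gaussianity then forces the i.i.d.\ $N(0,1)$ conclusion.

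This is a somewhat different route from the paper's. The paper stays at the level of the explicit formula \eqref{eq.theta.coeff}: it notes that each $\eta_{m,k}$ is a linear functional of the Gaussian process $W$ and hence Gaussian, then computes $\mathbb{E}\eta_{m,k}=0$ and $\mathrm{Var}(\eta_{m,k})=1$ by direct substitution of $\mathrm{Var}(W(t_2^{m,k})-W(t_1^{m,k}))=t_2^{m,k}-t_1^{m,k}$ etc., and finally asserts $\mathrm{Cov}(\eta_{m,k},\eta_{m',k'})=0$ from independence of disjoint Brownian increments, without writing out the nested-support case. Your approach packages the same ingredients more structurally: by recognising $\eta_{m,k}$ as a Wiener integral you reduce all the moment computations to the single identity $\mathbb{E}\big[(\int\phi\,dW)(\int\psi\,dW)\big]=\langle\phi,\psi\rangle_{L^2}$, so that Proposition~\ref{Haar.properties}(iv) does the covariance calculation uniformly for both the disjoint and the nested case. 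The payoff is that the reason for the i.i.d.\ $N(0,1)$ law becomes transparent---it is exactly the orthonormality of the non-uniform Haar system---whereas the paper's bare-hands variance computation is more elementary but leaves the covariance step as an exercise. The alternative you sketch at the end (reading off the covariance directly from \eqref{eq.theta.coeff} by splitting into disjoint and nested supports) is essentially the paper's argument, with the nested case spelled out.
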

\begin{proof}
The projection of Brownian Motion on any basis is always Gaussian, hence $\eta_{m,k}$ is Gaussian. If the support of the function $e_{m,k}^\pi$ is $[t^{m,k}_1,t^{m,k}_3]$ and the maximum is attained at time $t^{m,k}_2$ then, applying Theorem \ref{coeff_hat_func} the coefficient $\eta_{m,k}$ has a closed-form representation as follows.
\begin{equation}\label{eq.theta.coeff1}
  \eta_{m,k} = \frac{\bigg[(W(t^{m,k}_{2})-W(t^{m,k}_{1}))(t^{m,k}_{3}-t^{m,k}_{2})-(W(t^{m,k}_{3})- W(t^{m,k}_{2}))(t^{m,k}_{2}-t^{m,k}_1) \bigg]}{\sqrt{(t^{m,k}_2-t^{m,k}_1)(t^{m,k}_3-t^{m,k}_2)(t^{m,k}_3-t^{m,k}_1)}}.
\end{equation}
Since $W$ is a Brownian motion, \[\mathbb{E}(\eta_{m,k}) = \frac{\bigg[\mathbb{E}(W(t^{m,k}_{2})-W(t^{m,k}_{1}))(t^{m,k}_{3}-t^{m,k}_{2})-\mathbb{E}(W(t^{m,k}_{3})- W(t^{m,k}_{2}))(t^{m,k}_{2}-t^{m,k}_1) \bigg]}{\sqrt{(t^{m,k}_2-t^{m,k}_1)(t^{m,k}_3-t^{m,k}_2)(t^{m,k}_3-t^{m,k}_1)}}=0 \quad \text{ and,}\]
\[Var(\eta_{m,k}) = \frac{Var\left[(W(t^{m,k}_{2})-W(t^{m,k}_{1}))(t^{m,k}_{3}-t^{m,k}_{2})\right]+Var\left[(W(t^{m,k}_{3})- W(t^{m,k}_{2}))(t^{m,k}_{2}-t^{m,k}_1)\right] }{(t^{m,k}_2-t^{m,k}_1)(t^{m,k}_3-t^{m,k}_2)(t^{m,k}_3-t^{m,k}_1)} \]
\[+ \frac{Cov\left((W(t^{m,k}_{2})-W(t^{m,k}_{1}))(t^{m,k}_{3}-t^{m,k}_{2}),(W(t^{m,k}_{3})- W(t^{m,k}_{2}))(t^{m,k}_{2}-t^{m,k}_1) \right)}{(t^{m,k}_2-t^{m,k}_1)(t^{m,k}_3-t^{m,k}_2)(t^{m,k}_3-t^{m,k}_1)}\]
\[=\frac{(t^{m,k}_2-t^{m,k}_1)(t^{m,k}_3-t^{m,k}_2)^2+(t^{m,k}_2-t^{m,k}_1)^2(t^{m,k}_3-t^{m,k}_2)}{(t^{m,k}_2-t^{m,k}_1)(t^{m,k}_3-t^{m,k}_2)(t^{m,k}_3-t^{m,k}_1)} =1. \]
Using the orthogonality of increments of Brownian motion we can show that $Cov\left(\eta_{m,k},\eta_{m'k'}\right) = \mathbbm{1}_{m=m'}\mathbbm{1}_{k=k'}$. Along with the fact that $\eta_{m,k}$ is Gaussian, we can conclude $\eta_{m,k}\sim^{IID} N(0,1)$.
\end{proof}
For Brownian motion $W$ the  quadratic variation along $\pi$ can be represented using the explicit representation of quadratic variation (Theorem \ref{QV-weighted-schied}) as following:
\[[W]_{\pi}(t) = \lim_{n\to\infty}[W]_{\pi^n}(t), \quad \text{with:} \quad
[W]_{\pi^n}(t) = \sum_{m=0}^{n-1}\sum_{k}a^n_{m,k}(t)\eta^2_{m,k} +\sum_{m, m'=0}^{n-1}\sum_{\substack{k,k'\\(m,k)\neq(m',k')}}b^n_{m,k,m',k'}(t)\eta_{m,k}\eta_{m',k'}.\]
Now we know that for Brownian motion $\mathbb{E}[W]_{\pi}(t) =\lim_{n\to\infty} \mathbb{E}[W]_{\pi^n}(t) = t$. So,
\[\lim_{n\to \infty}\left[\mathbb{E}\sum_{m=0}^{n-1}\sum_{k}a^n_{m,k}(t)\eta^2_{m,k}  +\mathbb{E}\sum_{m, m'=0}^{n-1}\sum_{\substack{k,k'\\(m,k)\neq(m',k')}}b^n_{m,k,m',k'}(t)\eta_{m,k}\eta_{m',k'} \right] = t\]
\[\implies\lim_{n\to \infty}\left[\sum_{m=0}^{n-1}\sum_{k}a^n_{m,k}(t)\mathbb{E}\eta^2_{m,k}  +\sum_{m, m'=0}^{n-1}\sum_{\substack{k,k'\\(m,k)\neq(m',k')}}b^n_{m,k,m',k'}(t)\mathbb{E}[\eta_{m,k}\eta_{m',k'}] \right] = t\]
\begin{equation}\label{identity1}
\implies\lim_{n\to \infty}\left[\sum_{m=0}^{n-1}\sum_{k}a^n_{m,k}(t)\right] = t . 
\end{equation}
Since $a^n_{m,k}(t)$ only depends on the refining partition $\pi$, and \textit{not} on the path of Brownian motion, the above invariant is true for any finitely refining sequence of partitions $\pi$. For Brownian motion we also know that $\lim_{n\to \infty} \mathbb{E}([W]_{\pi^n}(t)-t)^2 =0.$ This implies, $\lim_{n\to \infty} \mathbb{E}([W]_{\pi^n}(t))^2 =t^2$. So,
\[\lim_{n\to \infty}\mathbb{E}\left[\sum_{m=0}^{n-1}\sum_{k}a^n_{m,k}(t)\eta^2_{m,k}  +\sum_{m, m'=0}^{n-1}\sum_{\substack{k,k'\\(m,k)\neq(m',k')}}b^n_{m,k,m',k'}(t)\eta_{m,k}\eta_{m',k'} \right]^2 = t^2\]
\[\implies \lim_{n\to \infty}\mathbb{E}\bigg[\sum_{m=0}^{n-1}\sum_{k}(a^n_{m,k}(t))^2\eta^4_{m,k}  +\sum_{m,m'=0}^{n-1}\sum_{\substack{k,k'\\(m,k)\neq(m',k')}}a^n_{m,k}(t)a^n_{m',k'}(t)\eta^2_{m,k}\eta^2_{m',k'}  \]
\[+\sum_{m, m'=0}^{n-1}\sum_{\substack{k,k'\\(m,k)\neq(m',k')}}(b^n_{m,k,m',k'}(t))^2\eta^2_{m,k}\eta^2_{m',k'} \Bigg] = t^2\]
\[\implies \lim_{n\to \infty}\bigg[3\sum_{m=0}^{n-1}\sum_{k}(a^n_{m,k}(t))^2  +\sum_{m,m'=0}^{n-1}\sum_{\substack{k,k'\\(m,k)\neq(m',k')}}a^n_{m,k}(t)a^n_{m',k'}(t) +\sum_{m, m'=0}^{n-1}\sum_{\substack{k,k'\\(m,k)\neq(m',k')}}(b^n_{m,k,m',k'}(t))^2 \Bigg] = t^2\]
\[\implies \lim_{n\to \infty}\left(\sum_{m}^{n-1}\sum_{k}a^n_{m,k}(t) \right)\left(\sum_{m'}^{n-1}\sum_{k'}a^n_{m',k'}(t)\right)\]\[+ \lim_{n\to \infty}\bigg[2\sum_{m=0}^{n-1}\sum_{k}(a^n_{m,k}(t))^2  + \sum_{m, m'=0}^{n-1}\sum_{\substack{k,k'\\(m,k)\neq(m',k')}}(b^n_{m,k,m',k'}(t))^2 \Bigg] = t^2.\]
From Equation \eqref{identity1} we know that the first sum converges to $t^2$. So the above equality reduces to:
\[\lim_{n\to \infty}\bigg[2\sum_{m=0}^{n-1}\sum_{k}(a^n_{m,k}(t))^2 +\sum_{m, m'=0}^{n-1}\sum_{\substack{k,k'\\(m,k)\neq(m',k')}}(b^n_{m,k,m',k'}(t))^2 \Bigg] = 0.\]
Since both the two summations in the limit are positive we get the following two identities:
\begin{equation}\label{identity2}
  \lim_{n\to \infty}\bigg[\sum_{m=0}^{n-1}\sum_{k}(a^n_{m,k}(t))^2 \bigg] =0 \;\text{and, } 
\end{equation}
\begin{equation}\label{identity3}
  \lim_{n\to \infty}\bigg[\sum_{m, m'=0}^{n-1}\sum_{\substack{k,k'\\(m,k)\neq(m',k')}}(b^n_{m,k,m',k'}(t))^2 \Bigg] = 0.
\end{equation}
Since both $a^n_{m,k}(t)$ and $ b^n_{m,k,m',k'}(t)$ are only dependent of the sequence of partitions $\pi$ and \textit{not} dependents on the Brownian path $W$, Equation \eqref{identity2} and Equation \eqref{identity3} are true for any finitely refining sequence of partitions $\pi$ of $[0,1]$.
\par In the following theorem, we provide a class of processes with linear  quadratic variation along  a finitely refining partition sequence $\pi$.
\begin{theorem}\label{mainpart1}
Let $\pi$ be a finitely refining sequence of partitions with vanishing mesh $|\pi^n| \to 0$. Define,  for $t\in [0,1],$
\[x(t) =x(0)+ (x(1)-x(0))t + \sum_{m=0}^{\infty}\sum_{k=1}^{N(\pi^{m+1})-N(\pi^m)}\eta_{mk}e^{\pi}_{m,k}(t),\]
where $\left(\eta_{m,k}, m\in \mathbb{N}, k=1,\cdots,N(\pi^{m+1})-N(\pi^m) \right)$ is a family of random variables with
$$\mathbb{E}\eta_{m,k} = 0, \qquad \mathbb{E}\eta_{m,k}\eta_{m',k'} = \mathbbm{1}_{m,m'}\mathbbm{1}_{k,k'},\qquad \mathbb{E}\eta_{m,k}^4<\infty \quad \text{ and,}$$
$$\mathbb{E}(\eta_{m,k}^\alpha\eta_{m_1,k_1}^\beta\eta_{m_2,k_2}^\gamma\eta_{m_3,k_3}^\delta) =\mathbb{E}(\eta_{m,k}^\alpha)\mathbb{E}(\eta_{m_1,k_1}^\beta)\mathbb{E}(\eta_{m_2,k_2}^\gamma)\mathbb{E}(\eta_{m_3,k_3}^\delta)$$  for all integers $\alpha,\beta,\gamma,\delta$ such that $\alpha+\beta+\gamma+\delta=4$.
Then: 
\[\forall\; \epsilon>0, \qquad \lim_{n\to \infty}\mathbb{P}(|[x]_{\pi^n}(t)-t|>\epsilon)=0.\]
Furthermore, if the sequence of partitions $\pi$ is complete refining and balanced then the quadratic variation of $x$ along $\pi$ \textbf{exists} and is linear almost surely, i.e. 
$$x\in Q_\pi([0,1],\mathbb{R})\; \text{almost surely, and }\;  \mathbb{P}([x]_\pi(t)= t)=1.$$
\end{theorem}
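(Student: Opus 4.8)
The plan is to read off $[x]_{\pi^n}(t)$ directly from the explicit formula of Theorem~\ref{QV-weighted-schied}, namely $[x]_{\pi^n}(t)=S_n(t)+T_n(t)$ with $S_n(t)=\sum_{m=0}^{n-1}\sum_k a^n_{m,k}(t)\,\eta_{m,k}^2$ and $T_n(t)=\sum_{(m,k)\ne(m',k')}b^n_{m,k,m',k'}(t)\,\eta_{m,k}\eta_{m',k'}$, and then to control its first two moments using the moment hypotheses on $(\eta_{m,k})$ together with the path-independent identities \eqref{identity1}--\eqref{identity3} (valid for every finitely refining $\pi$ because $a^n_{m,k}$ and $b^n_{m,k,m',k'}$ do not depend on the path). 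A useful preliminary remark is that $[x]_{\pi^n}(t)$ is a \emph{finite} linear combination of the $\eta$'s, so the in-probability assertion requires no convergence of the Schauder series at all.

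For the in-probability statement I would first observe $\mathbb E\,[x]_{\pi^n}(t)=\sum_{m,k}a^n_{m,k}(t)=t$ exactly (this is $\mathbb E\,[W]_{\pi^n}(t)$, and \eqref{identity1} in fact holds with equality for every $n$), so there is zero bias. For the variance, the fourth-moment factorization should annihilate every cross contribution: $\mathrm{Cov}(S_n,T_n)=0$ (each summand of $\mathbb E[S_nT_n]$ carries an unpaired factor $\mathbb E\eta=0$, or a factor $\mathbb E\eta^3\,\mathbb E\eta=0$), $\mathrm{Var}(S_n)=\sum_{m,k}(a^n_{m,k}(t))^2(\mathbb E\eta_{m,k}^4-1)$, and $\mathbb E T_n^2=\sum_{(m,k)\ne(m',k')}(b^n_{m,k,m',k'}(t))^2$ --- the only surviving pairing, since the ``swapped'' term $b^n_{m,k,m',k'}b^n_{m',k',m,k}$ vanishes (it would force $\mathrm{supp}(e^n_{m,k})$ and $\mathrm{supp}(e^n_{m',k'})$ to be strictly nested in both directions). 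Using a uniform fourth-moment bound $\kappa_4:=\sup_{m,k}\mathbb E\eta_{m,k}^4<\infty$ (equal to $3$ in the Brownian case), identities \eqref{identity2}--\eqref{identity3} then give $\mathrm{Var}([x]_{\pi^n}(t))\to0$, hence $[x]_{\pi^n}(t)\to t$ in $L^2$ and a fortiori in probability.

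For the almost-sure statement I would instead bound the same variances quantitatively using Proposition~\ref{boundOn_ab}. Complete refining gives $|\pi^n|\le|\pi^1|(1+\epsilon)^{-(n-1)}$ and $|\pi^m|/|\pi^{m'}|\le(1+\epsilon)^{-(m-m')}$ for $m\ge m'$, while balancedness gives $|\pi^n|N(\pi^n)\le c$; also $\#\{(m,k):m\le n-1\}=N(\pi^n)-1$. From $a^n_{m,k}\le|\pi^n|$ one gets $\mathrm{Var}(S_n)\le\kappa_4\,N(\pi^n)|\pi^n|^2\le\kappa_4 c\,|\pi^n|$; for $T_n$, bounding the number of $(m',k')$ with $\mathrm{supp}(e^n_{m,k})\subset\mathrm{supp}(e^n_{m',k'})$ by $O(M(m+1))$, using $|b^n_{m,k,m',k'}|\le C(|\pi^n|-\underline{\pi^n})\sqrt{|\pi^m|/|\pi^{m'}|}$ and summing the resulting geometric series $\sum_{m'\le m}|\pi^m|/|\pi^{m'}|<\infty$, I expect $\mathrm{Var}(T_n)\le C'(|\pi^n|-\underline{\pi^n})^2 N(\pi^n)\le C''|\pi^n|$. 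Hence $\mathbb E\,|[x]_{\pi^n}(t)-t|^2=O((1+\epsilon)^{-n})$ is summable in $n$, so Markov's inequality and Borel--Cantelli give $[x]_{\pi^n}(t)\to t$ a.s., and intersecting over the countable set $\bigcup_m\pi^m$ produces one almost-sure event on which $[x]_{\pi^n}(s)\to s$ for every $s\in\bigcup_m\pi^m$.

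The remaining step, which I expect to be the main obstacle, is upgrading this to \emph{uniform} convergence of $[x]_{\pi^n}(\cdot)$ to the continuous increasing function $t\mapsto t$, so that Proposition~\ref{prop.cont.qv} applies and yields $x\in Q_\pi([0,1],\mathbb R)$ a.s. with $[x]_\pi(t)=t$. I would first secure $x\in C^0([0,1],\mathbb R)$ a.s.: under complete refining and balancedness a Ciesielski-type estimate (geometric decay of $|\pi^m|$ against the $\sqrt{\log N(\pi^m)}$-growth of $\max_k|\eta_{m,k}|$) shows the Schauder series converges a.s. uniformly, so the oscillation $\omega_n:=\max_i(x(t^n_{i+1})-x(t^n_i))^2\to0$. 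Then, although $[x]_{\pi^n}(\cdot)$ is not monotone in $t$, on each interval $[t^n_j,t^n_{j+1}]$ one has $[x]_{\pi^n}(t)=[x]_{\pi^n}(t^n_j)+(x(t)-x(t^n_j))^2$, so $[x]_{\pi^n}(t^n_j)\le[x]_{\pi^n}(t)\le[x]_{\pi^n}(t^n_j)+\omega_n$; combining this piecewise bound with monotonicity of $[x]_{\pi^n}$ along partition points and the a.s.\ convergence on the dense set $\bigcup_m\pi^m$, a Dini/P\'olya-type argument should give $\sup_{t\in[0,1]}|[x]_{\pi^n}(t)-t|\to0$ a.s., completing the proof. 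The two places that need genuine work are precisely this between-partition-points control and the quantitative decay rates of the previous paragraph, which is where the ``balanced'' and ``complete refining'' hypotheses actually enter.
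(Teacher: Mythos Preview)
Your approach is essentially the paper's: decompose $[x]_{\pi^n}(t)$ via Theorem~\ref{QV-weighted-schied}, compute first and second moments using the path-independent identities \eqref{identity1}--\eqref{identity3} to get $L^2$-convergence, and for the a.s.\ part bound $\mathrm{Var}([x]_{\pi^n}(t))\le C|\pi^n|$ via Proposition~\ref{boundOn_ab} and apply Markov plus Borel--Cantelli (the paper takes $\epsilon_n=|\pi^n|^{1/4}$ and uses $\sum_n\sqrt{|\pi^n|}<\infty$ under complete refining, which is your summability observation in slightly different dress). You are in fact more careful than the paper on two minor points: you note that the bias is exactly zero at every $n$ (the paper writes \eqref{identity1} only as a limit), and you explicitly dispose of the swapped term $b^n_{m,k,m',k'}b^n_{m',k',m,k}$ in $\mathbb E T_n^2$. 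More substantively, you correctly flag that pointwise a.s.\ convergence $[x]_{\pi^n}(t)\to t$ does not yet yield $x\in Q_\pi$ in the sense of Proposition~\ref{prop.cont.qv}, and you sketch the upgrade to uniform convergence; the paper's own proof actually stops at the pointwise statement and does not carry out this last step.
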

Note that the coefficients are neither assumed independent nor Gaussian, so this class of processes contains examples of processes other  Brownian motions. 
\begin{proof} Using Theorem \ref{QV-weighted-schied} the quadratic variation of $x$ along $\pi$ at level $n$ can be represented as:
\[[x]_{\pi^n}(t) = \sum_{m=0}^{n-1}\sum_{k}a^n_{m,k}(t)\eta^2_{m,k}  +\sum_{m, m'=0}^{n-1}\sum_{\substack{k,k'\\(m,k)\neq(m',k')}}b^n_{m,k,m',k'}(t)\eta_{m,k}\eta_{m',k'} .\]
Now using the assumptions on the coefficient $\eta_{m,k}$, we will show that $\forall t\in[0,1],\; \lim_{n\to \infty}\mathbb{E}[x]_{\pi^n}(t) = t$. 
\[\lim_{n\to\infty}\mathbb{E}[x]_{\pi^n}(t) =\lim_{n\to \infty}\mathbb{E} \left[\sum_{m=0}^{n-1}\sum_{k}a^n_{m,k}(t)\eta^2_{m,k}  +\sum_{m, m'=0}^{n-1}\sum_{\substack{k,k'\\(m,k)\neq(m',k')}}b^n_{m,k,m',k'}(t)\eta_{m,k}\eta_{m',k'} \right]\]
\[ =\lim_{n\to \infty} \left[\sum_{m=0}^{n-1}\sum_{k}a^n_{m,k}(t)\mathbb{E}\eta^2_{m,k}  +\sum_{m, m'=0}^{n-1}\sum_{\substack{k,k'\\(m,k)\neq(m',k')}}b^n_{m,k,m',k'}(t)\mathbb{E}\left(\eta_{m,k}\eta_{m',k'}\right) \right]\]
\[ =\lim_{n\to \infty} \left[\sum_{m=0}^{n-1}\sum_{k}a^n_{m,k}(t) \right] = t. \]
The last equality follows from Equation \eqref{identity1}. Now to prove $[x]_{\pi^n}(t)\to t $ in probability, we only need to show that $ \lim_{n\to \infty}\mathbb{E}([x]_{\pi^n}(t))^2 = t^2$. So:
\[\lim_{n\to \infty}\mathbb{E}\left([x]_{\pi^n}(t)\right)^2 =\lim_{n\to\infty} \mathbb{E}\left[ \sum_{m=0}^{n-1}\sum_{k}a^n_{m,k}(t)\eta^2_{m,k}  +\sum_{m, m'=0}^{n-1}\sum_{\substack{k,k'\\(m,k)\neq(m',k')}}b^n_{m,k,m',k'}(t)\eta_{m,k}\eta_{m',k'} \right]^2 \]
\[= \lim_{n\to \infty}\mathbb{E}\Bigg[\sum_{m=0}^{n-1}\sum_{k}(a^n_{m,k}(t))^2\eta^4_{m,k}  +\sum_{m,m'=0}^{n-1}\sum_{\substack{k,k'\\(m,k)\neq(m',k')}}a^n_{m,k}(t)a^n_{m',k'}(t)\eta^2_{m,k}\eta^2_{m',k'}  \]
\[+\sum_{m, m'=0}^{n-1}\sum_{\substack{k,k'\\(m,k)\neq(m',k')}}(b^n_{m,k,m',k'}(t))^2\eta^2_{m,k}\eta^2_{m',k'}  \]\[+ \sum_{m=0}^{n-1}\sum_{k} \sum_{m', m''=0}^{n-1}\sum_{\substack{k',k''\\(m',k')\neq(m'',k'')}}a^n_{m,k}(t)\eta^2_{m,k} b^n_{m',k',m'',k''}(t)\eta_{m',k'}\eta_{m'',k''}  \Bigg] \]
\[= \lim_{n\to \infty}\left[\sum_{m=0}^{n-1}\sum_{k}(a^n_{m,k}(t))^2 \mathbb{E}\eta_{m,k}^4  +\sum_{m,m'=0}^{n-1}\sum_{\substack{k,k'\\(m,k)\neq(m',k')}}a^n_{m,k}(t)a^n_{m',k'}(t)+\sum_{m, m'=0}^{n-1}\sum_{\substack{k,k'\\(m,k)\neq(m',k')}}(b^n_{m,k,m',k'}(t))^2\right] \]
\[= \lim_{n\to \infty}\left(\sum_{m}^{n-1}\sum_{k}a^n_{m,k}(t)\right)\left(\sum_{m'}^{n-1}\sum_{k'}a^n_{m',k'}(t) \right) \]\[+ \lim_{n\to \infty}\Bigg[\sum_{m=0}^{n-1}\sum_{k}(a^n_{m,k}(t))^2(\mathbb{E}\eta_{m,k}^4-1) + \sum_{m, m'=0}^{n-1}\sum_{\substack{k,k'\\(m,k)\neq(m',k')}}(b^n_{m,k,m',k'}(t))^2 \Bigg]. \]
Using Equation \eqref{identity1} we know that the first sum converges to $t^2$. The last two sum can be bounded above as follows.:
\[\left|\lim_{n\to \infty}\Bigg[\sum_{m=0}^{n-1}\sum_{k}(a^n_{m,k}(t))^2(\mathbb{E}\eta_{m,k}^4-1)+\sum_{m, m'=0}^{n-1}\sum_{\substack{k,k'\\(m,k)\neq(m',k')}}(b^n_{m,k,m',k'}(t))^2\Bigg]\right| \]
\[\leq \lim_{n\to \infty}\Bigg[C\sum_{m=0}^{n-1}\sum_{k}(a^n_{m,k}(t))^2+\sum_{m, m'=0}^{n-1}\sum_{\substack{k,k'\\(m,k)\neq(m',k')}}(b^n_{m,k,m',k'}(t))^2\Bigg]= 0. \]
The last equality follows using the Equality \eqref{identity2} and Equality \eqref{identity3}. So we have $\lim_{n\to \infty}\mathbb{E}[x]_{\pi^n}(t) = t$, and correspondingly $\lim_{n\to \infty}\mathbb{E}\left([x]_{\pi^n}(t) - t \right)^2=0$. So  $[x]_{\pi^n}(t) \to t$ in probability.
\\Now we will prove the almost sure convergence. Since for this part we have already assumed $\pi$ is balanced, from the previous calculations and using the bounds from Proposition \ref{boundOn_ab} we get the bound on $Var([x]_{\pi^n}(t))$ as following:
\[Var([x]_{\pi^n}(t)) \leq \left|C\sum_{m=0}^{n-1}\sum_{k}(a^n_{m,k}(t))^2+\sum_{m, m'=0}^{n-1}\sum_{\substack{k,k'\\(m,k)\neq(m',k')}}(b^n_{m,k,m',k'}(t))^2 \right|\]
\[\leq \left||\pi^n|^2N(\pi^n) + C(|\pi^n|-\underline{\pi^n})^2\sum_{m=0}^{n-1}\sum_{k=0}^{N(\pi^{m+1})-N(\pi^{m})-1}\sum_{m'=0}^{m-1}\frac{|\pi^m|}{|\pi^{m'}|} \right|.\]
Since, $\pi$ is also complete refining there exists $C_0<\infty$ such that $\sum_{m'=0}^{m-1}\frac{|\pi^m|}{|\pi^{m'}|}\leq C_0$. So we get the bound on variance as follows.
\[Var([x]_{\pi^n}(t)) \leq C_1|\pi^n|.\]
Now take $\epsilon_n=|\pi^n|^{\frac{1}{4}}$, then from Markov inequality we have:
\[\mathbb{P}(|[x]_{\pi^n}(t)-t|\geq \epsilon_n)\leq \frac{Var([x]_{\pi^n}(t))}{\epsilon_n^2}\leq C\sqrt{|\pi^n|}. \]
Since $\pi$ is a complete refining sequence of partitions of $[0,1]$, $\sum_{n=0}^\infty \sqrt{|\pi^n|} <\infty$. So using Borel-Cantelli Lemma we can show, $\mathbb{P}(|[x]_{\pi^n}(t)-t|\geq \epsilon_n, \text{ infinitely often }) = 0 $, where $\epsilon=|\pi^n|^{\frac{1}{4}} \to 0$. Hence, we have $[x]_{\pi^n}(t) \to t$ almost-surely. So as a consequence $[x]_\pi(t)= \lim_{n\to \infty} [x]_{\pi^n}(t)$ exists almost surely and $[x]_\pi(t)=t $ almost surely. 
\end{proof}
To summarize, for any finitely refining sequence of partitions $\pi$ we define  
\ba\mathcal{B}^\pi = \Bigg\{x:\Omega\times [0,1]\mapsto \mathbb{R},\qquad x(t) = x(0)+ \left(x(1)-x(0) \right)t+ \sum_{m=0}^{\infty} \sum_{k=0}^{N(\pi^{m+1})-N(\pi^m)} \eta_{m,k} e^{\pi}_{m,k}(t) \nonumber\qquad\qquad\\ \text{ where, }\; \mathbb{E}(\eta_{m,k}) = 0,\; \mathbb{E}(\eta_{m,k}\eta_{m',k'}) = \delta_{m,m'}\delta_{k,k'},\; \mathbb{E}(\eta_{m,k}^4)\leq M<\infty,\; \text{ and, }\qquad\qquad\\\text{for integers }\alpha+\beta+\gamma+\delta=4, \;\mathbb{E}(\eta_{m,k}^\alpha\eta_{m_1,k_1}^\beta\eta_{m_2,k_2}^\gamma\eta_{m_3,k_3}^\delta) =\mathbb{E}(\eta_{m,k}^\alpha)\mathbb{E}(\eta_{m_1,k_1}^\beta)\mathbb{E}(\eta_{m_2,k_2}^\gamma)\mathbb{E}(\eta_{m_3,k_3}^\delta)  \Bigg\}.\quad\nonumber\ea
Then for any $x\in \mathcal{B}^\pi $, we have $[x]_{\pi^n}(t) \to t $ in probability.
Furthermore, if $\pi$ is also balanced and complete refining partition sequence, then the convergence is  almost surely.
\begin{corollary}
For any balanced complete refining sequence of partitions $\pi$, we have $\mathcal{B}^\pi \subset Q_\pi([0,1],\mathbb{R})$ almost surely.
\end{corollary}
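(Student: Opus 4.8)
The plan is to read off the corollary from Theorem \ref{mainpart1}. By construction every $x\in\mathcal{B}^\pi$ has a Schauder expansion along $\pi$ whose coefficients $(\eta_{m,k})$ obey precisely the moment and factorization hypotheses of Theorem \ref{mainpart1}, so the only structural check is that a balanced complete refining sequence is finitely refining with vanishing mesh: the bound $1+\epsilon\le|\pi^n|/|\pi^{n+1}|$ gives $|\pi^n|\le|\pi^1|(1+\epsilon)^{-(n-1)}\to0$, and combining $|\pi^n|/|\pi^{n+1}|\le M$ with the balance inequality $|\pi^{n+1}|\le c\,\underline{\pi^{n+1}}$ shows that any interval of $\pi^n$ contains at most $Mc$ points of $\pi^{n+1}$. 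Hence Theorem \ref{mainpart1} applies to each $x\in\mathcal{B}^\pi$ and gives, for every fixed $t\in[0,1]$, $[x]_{\pi^n}(t)\to t$ almost surely.

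What still has to be supplied is the passage from this fixed-$t$ statement to the pathwise membership $x\in Q_\pi([0,1],\mathbb{R})$, which by Proposition \ref{prop.cont.qv} amounts to: on a single event of full probability, $[x]_{\pi^n}\to(\,t\mapsto t\,)$ \emph{uniformly} on $[0,1]$. For the single null set I would use that $D:=\bigcup_{m\ge1}\pi^m$ is a countable dense subset of $[0,1]$ (the mesh vanishes and the sequence is refining); intersecting the full-probability events $\{[x]_{\pi^n}(s)\to s\}$ over $s\in D$ with the full-probability event on which the series defining $x$ converges uniformly — so that $x$ is continuous, hence has a modulus of continuity $\omega_x$ with $\omega_x(\delta)\to0$ — produces an event $\Omega_0$ with $\mathbb{P}(\Omega_0)=1$ on which $[x]_{\pi^n}(s)\to s$ for all $s\in D$.

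On $\Omega_0$, fix $\varepsilon>0$, pick $\delta\in(0,\varepsilon/3)$ with $\omega_x(\delta)^2<\varepsilon/3$ and a level $m$ with $|\pi^m|<\delta$; since $\pi^m\subseteq\pi^n$ for $n\ge m$ and $\pi^m$ is finite, choose $N\ge m$ with $|[x]_{\pi^n}(s)-s|<\varepsilon/3$ for all $s\in\pi^m$ and $n\ge N$. For arbitrary $t\in[0,1]$ let $s_\ell\le t<s_{\ell+1}$ be the consecutive points of $\pi^m$ bracketing $t$; because $s_\ell,s_{\ell+1}\in\pi^n$, a term-by-term comparison of the defining sums of $[x]_{\pi^n}$ (in which the only incomplete cell contributes at most $\omega_x(|\pi^n|)^2\le\omega_x(\delta)^2$) yields $[x]_{\pi^n}(s_\ell)\le[x]_{\pi^n}(t)\le[x]_{\pi^n}(s_{\ell+1})+\omega_x(\delta)^2$. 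Feeding in $|[x]_{\pi^n}(s_\ell)-s_\ell|<\varepsilon/3$, $|[x]_{\pi^n}(s_{\ell+1})-s_{\ell+1}|<\varepsilon/3$, $|s_{\ell+1}-s_\ell|<\delta<\varepsilon/3$ and $|t-s_\ell|<\delta$ gives $|[x]_{\pi^n}(t)-t|<\varepsilon$ for all $t$, $n\ge N$, i.e. uniform convergence; Proposition \ref{prop.cont.qv} then gives $x\in Q_\pi([0,1],\mathbb{R})$ with $[x]_\pi(t)=t$, almost surely. I expect the only non-routine step to be this interpolation between the fixed grid $\pi^m$ and a general $t$: since $[x]_{\pi^n}(\cdot)$ is not globally monotone, one has to exploit that it is monotone \emph{along partition points} together with the vanishing oscillation of $x$ inside partition cells.
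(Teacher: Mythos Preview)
Your argument is correct, and in fact it does more than the paper does. In the paper the corollary is stated without proof: it is simply a restatement of the second conclusion of Theorem~\ref{mainpart1}, which already asserts $x\in Q_\pi([0,1],\mathbb{R})$ almost surely whenever $\pi$ is balanced and complete refining. The paper's proof of Theorem~\ref{mainpart1}, however, only establishes $[x]_{\pi^n}(t)\to t$ almost surely for each \emph{fixed} $t$ (via the variance bound and Borel--Cantelli) and then declares $x\in Q_\pi$; it does not supply the upgrade to uniform convergence on $[0,1]$ that Proposition~\ref{prop.cont.qv} requires.

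You fill exactly this gap. Your structural check that balanced $+$ complete refining implies finitely refining with vanishing mesh is correct, and your interpolation argument---passing to the countable dense set $D=\bigcup_m\pi^m$, then squeezing $[x]_{\pi^n}(t)$ between values at neighbouring points of $\pi^m$ using monotonicity at partition points and the bound $(x(t)-x(t^n_p))^2\le\omega_x(|\pi^n|)^2$ for the single incomplete cell---is sound. The one place you rely on something not yet proved at this point in the paper is the almost-sure uniform convergence of the Schauder series (hence continuity of $x$); this is supplied later by Theorem~\ref{hat.continuity}, whose hypotheses (balanced, finitely refining, bounded fourth moments) are met here, so the citation is legitimate even if out of order.

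In short: the paper's route is ``immediate from Theorem~\ref{mainpart1}'' and leaves the pointwise-to-uniform step implicit; your route makes that step explicit and thereby gives a complete justification of membership in $Q_\pi$ in the sense of Definition~\ref{def.pathwiseQV}.
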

\subsection{Processes with prescribed quadratic variation}

A well known method for constructing a process with prescribed quadratic variation is via time-changed Brownian motion.
Let $W$ be a Wiener process on a probability space $(\Omega, {\cal F},\mathbb{P})$. Then for any continuous increasing function $\phi:[0,\infty)\to [0,\infty)$   with $\phi(0)=0$ the process
$Y(t)=W(\phi(t))$ and any refining partition $\pi$, by L\'evy's theorem we have
  $$[Y]_{\pi}(t)=\phi(t)$$ almost surely. 
\par In this subsection, we will construct a class of processes with this property, using a different construction based on the Schauder expansion. We will show that our class contains time-changed Brownian motion, but also other processes which may not be semimartingales.
\par Without loss of generality for the rest of this section we will also assume  $\phi(1)=1$. We first study the Schauder expansion of a time-changed  Brownian motion: the proof of the following is based on straightforward calculations.
\begin{lemma}[Schauder expansion of a time-changed Brownian motion]\label{Brownian.coeff.timechange}
Let $\pi$ to be a finitely refining sequence of partitions and  $Y(t)=W(\phi(t))$, where $W$ is a   Brownian motion and $\phi:[0,\infty)\to [0,\infty)$ an increasing function with $\phi(0)=0$.
Then $Y$ has the following Schauder expansion:
\[Y(t) =Y(0)+ (Y(1)-Y(0))t + \sum_{m=0}^{\infty}\sum_{k=1}^{N(\pi^{m+1})-N(\pi^m)}\eta_{mk}(Y)e^{\pi}_{m,k}(t), \]
where $\eta_{m,k}(Y)\sim \mathcal{N}(0,w^{\pi,\phi}_{m,k})$  are independent and 
\begin{equation}
     w^{\pi,\phi}_{m,k}= \frac{(\phi(t^{m,k}_2)-\phi(t^{m,k}_1))(t^{m,k}_3-t^{m,k}_2)^2+(t^{m,k}_2-t^{m,k}_1)^2(\phi(t^{m,k}_3)-\phi(t^{m,k}_2))}{(t^{m,k}_2-t^{m,k}_1)(t^{m,k}_3-t^{m,k}_2)(t^{m,k}_3-t^{m,k}_1)}, \label{eq.w}
\end{equation}
where $[t^{m,k}_1,t^{m,k}_3]={\rm supp}(e_{m,k}^\pi)$  and $e_{m,k}^\pi$ attains its maximum at   $t^{m,k}_2$.
\end{lemma}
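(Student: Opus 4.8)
The plan is to apply the closed-form expression for Schauder coefficients from Theorem \ref{coeff_hat_func} to the path $Y(t)=W(\phi(t))$ and then read off the Gaussian law of each coefficient by a direct moment computation, exactly as in the proof of Lemma \ref{Brownian.coeff}. First I would invoke Theorem \ref{coeff_hat_func}: since $Y\in C^0([0,1],\mathbb{R})$ (continuity of $W$ and $\phi$, with $\phi(0)=0$, $\phi(1)=1$), it admits the unique Schauder representation along $\pi$ with $\eta_{m,k}(Y)$ given by formula \eqref{eq.theta.coeff} applied to $Y$ in place of $x$, i.e.
\[
\eta_{m,k}(Y)=\frac{\big(Y(t^{m,k}_2)-Y(t^{m,k}_1)\big)(t^{m,k}_3-t^{m,k}_2)-\big(Y(t^{m,k}_3)-Y(t^{m,k}_2)\big)(t^{m,k}_2-t^{m,k}_1)}{\sqrt{(t^{m,k}_2-t^{m,k}_1)(t^{m,k}_3-t^{m,k}_2)(t^{m,k}_3-t^{m,k}_1)}}.
\]

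Next I would observe that $\eta_{m,k}(Y)$ is a finite linear combination of the jointly Gaussian increments $W(\phi(t^{m,k}_1))$, $W(\phi(t^{m,k}_2))$, $W(\phi(t^{m,k}_3))$, hence Gaussian, and compute its mean and variance. Since $\phi$ is increasing and $\phi(0)=0$, the increments $W(\phi(t^{m,k}_2))-W(\phi(t^{m,k}_1))$ and $W(\phi(t^{m,k}_3))-W(\phi(t^{m,k}_2))$ are independent centered Gaussians with variances $\phi(t^{m,k}_2)-\phi(t^{m,k}_1)$ and $\phi(t^{m,k}_3)-\phi(t^{m,k}_2)$ respectively. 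Therefore $\mathbb{E}[\eta_{m,k}(Y)]=0$, and expanding the square of the numerator (the cross term vanishes by independence) and dividing by the denominator gives precisely the expression \eqref{eq.w} for $w^{\pi,\phi}_{m,k}$. This is the same algebra as in Lemma \ref{Brownian.coeff}, with the Lebesgue lengths $t^{m,k}_2-t^{m,k}_1$ and $t^{m,k}_3-t^{m,k}_2$ replaced by the $\phi$-lengths $\phi(t^{m,k}_2)-\phi(t^{m,k}_1)$ and $\phi(t^{m,k}_3)-\phi(t^{m,k}_2)$ in the variance factors only.

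Finally I would establish independence across $(m,k)$. The key structural fact, already used in the proof of Proposition \ref{QV-weighted-schied}, is that the Haar functions $\psi_{m,k}$ form an orthonormal family, and $\eta_{m,k}(Y)=\int_0^1 \psi_{m,k}(s)\,\d\widetilde W(s)$ can be viewed as the projection of the time-changed path onto $\psi_{m,k}$; more concretely, $\eta_{m,k}(Y)$ is, up to the normalizing constants, the $\psi_{m,k}$-coefficient of the increment process, and two such coefficients have covariance equal to a weighted $L^2$-type inner product of $\psi_{m,k}$ and $\psi_{m',k'}$ against the (nonnegative) measure $\d\phi$. Since the $\psi_{m,k}$ have the nesting/disjointness structure of Proposition \ref{Haar.properties}, one checks that for $(m,k)\neq(m',k')$ the relevant increments used to build $\eta_{m,k}(Y)$ and $\eta_{m',k'}(Y)$ either live on disjoint intervals or one is constant on the support of the other (as in the proof of Proposition \ref{QV-weighted-schied}), so that the covariance is $0$; being jointly Gaussian, they are independent. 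The main obstacle — really the only subtle point — is handling the case $\mathrm{supp}(e^\pi_{m,k})\subset\mathrm{supp}(e^\pi_{m',k'})$ cleanly: here one must use that $\psi_{m',k'}$ is constant on $\mathrm{supp}(e^\pi_{m,k})$ together with the mean-zero property $\int \psi_{m,k}\,\d t=0$ and the fact that the covariance integrates $\psi_{m,k}$ against $\d\phi$; if $\phi$ is merely increasing (not absolutely continuous) one should phrase this as $\int_a^b \psi_{m,k}(s)\,\d\phi(s)=0$ whenever $\psi_{m,k}$ is a Haar function supported in $[a,b]$ with $\psi_{m,k}$ constant on each of the two sub-pieces, which still forces the cross-covariance to vanish because the constant value of $\psi_{m',k'}$ pulls out. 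Since the statement asserts the proof is "based on straightforward calculations," I would present the mean/variance computation in full and relegate the independence to the orthogonality remark, mirroring Lemma \ref{Brownian.coeff}.
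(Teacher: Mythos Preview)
Your derivation of the Schauder expansion, the Gaussianity of $\eta_{m,k}(Y)$, the zero mean, and the variance formula \eqref{eq.w} is correct and is exactly what the paper has in mind: it says only that the proof is ``based on straightforward calculations'' and implicitly refers back to the computation in Lemma~\ref{Brownian.coeff}, which you reproduce verbatim with $\phi$-lengths replacing Lebesgue lengths in the variance factors.

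The genuine gap is in your independence argument. Your key assertion, that $\int \psi_{m,k}(s)\,\d\phi(s)=0$, is \emph{false} for nonlinear $\phi$: Proposition~\ref{Haar.properties}(iii) gives $\int\psi_{m,k}\,\d t=0$ against Lebesgue measure, not against $\d\phi$. In the nested case $\mathrm{supp}(\psi_{m,k})\subset\mathrm{supp}(\psi_{m',k'})$ with $\psi_{m',k'}\equiv c\neq 0$ on $\mathrm{supp}(\psi_{m,k})$, the covariance is
\[
\mathrm{Cov}\big(\eta_{m,k}(Y),\eta_{m',k'}(Y)\big)=\int_0^1\psi_{m,k}(s)\psi_{m',k'}(s)\,\d\phi(s)=c\Big[v_1\big(\phi(t^{m,k}_2)-\phi(t^{m,k}_1)\big)+v_2\big(\phi(t^{m,k}_3)-\phi(t^{m,k}_2)\big)\Big],
\]
where $v_1,v_2$ are the two Haar values; this vanishes only when the two secant slopes of $\phi$ on $[t^{m,k}_1,t^{m,k}_2]$ and $[t^{m,k}_2,t^{m,k}_3]$ coincide. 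For a concrete check: with the dyadic partition and $\phi(t)=t^2$ one computes directly $\mathrm{Cov}(\eta_{0,0}(Y),\eta_{1,0}(Y))=\sqrt{2}\big(\mathrm{Var}\,W(1/16)-\mathrm{Var}(W(1/4)-W(1/16))\big)=-\sqrt{2}/8\neq 0$. So the step cannot be repaired along the line you propose; in fact this computation shows that the independence assertion in the lemma is itself problematic for general increasing $\phi$, and what survives unambiguously from the ``straightforward calculations'' is the marginal law $\eta_{m,k}(Y)\sim\mathcal N(0,w^{\pi,\phi}_{m,k})$.
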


We note that   $w^{\pi,\phi}_{m,k}$ are non-random and only depend on the partition sequence and the function $\phi$.
\par For any finitely refining sequence of partitions $\pi$, and for any continuous increasing function $\phi$ with $\phi(0)=0$, similar to Equation \eqref{identity1},\ref{identity2},\ref{identity3} we have the corresponding identities (which are only dependent on $\pi$ and $\phi$ but not on the path).
\ba \label{identity1.timechange} \lim_{n\to \infty}\left[\sum_{m=0}^{n-1}\sum_{k}a^{n,\pi}_{m,k}(t)w^{\pi,\phi}_{m,k} \right] = \phi(t) , \ea
\ba\label{identity2.timechange}
  \lim_{n\to \infty}\left[\sum_{m=0}^{n-1}\sum_{k}(a^{n,\pi}_{m,k}(t))^2(w^{\pi,\phi}_{m,k})^2 \right] =0,
\ea
\begin{equation}\label{identity3.timechange}
  \lim_{n\to \infty}\left[\sum_{m, m'=0}^{n-1}\sum_{\substack{k,k'\\(m,k)\neq(m',k')}}(b^{n,\pi}_{m,k,m',k'}(t))^2w^{\pi,\phi}_{m,k}w^{\pi,\phi}_{m',k'} \right] = 0.
\end{equation}
\par The following theorem provides us with a broader class of processes with prescribed quadratic variation:
\begin{theorem}\label{mainpart1.timechange}
Let $\pi$ be a finitely refining sequence of partitions  with vanishing mesh $|\pi^n| \to 0$ and    $\phi:[0,\infty)\to [0,\infty)$ an increasing function with $\phi(0)=0$. 
Define $x: $ 
\[x(t) =x(0)+ (x(1)-x(0))t + \sum_{m=0}^{\infty}\sum_{k=1}^{N(\pi^{m+1})-N(\pi^m)}\eta_{mk}e^{\pi}_{m,k}(t).\]
where $(\eta_{m,k}, m\in \mathbb{N}, k=1..N(\pi^{m+1})-N(\pi^m) )$ is a family of random variables with  $$\mathbb{E}\eta_{m,k} = 0, \qquad \mathbb{E}\eta_{m,k}\eta_{m',k'} = \mathbbm{1}_{m,m'}\mathbbm{1}_{k,k'} w^{\pi,\phi}_{m,k},\qquad \mathbb{E}\eta_{m,k}^4<\infty$$
where $w^{\pi,\phi}_{m,k}$ is given by \eqref{eq.w} and  $$\mathbb{E}(\eta_{m,k}^\alpha\eta_{m_1,k_1}^\beta\eta_{m_2,k_2}^\gamma\eta_{m_3,k_3}^\delta) =\mathbb{E}(\eta_{m,k}^\alpha)\mathbb{E}(\eta_{m_1,k_1}^\beta)\mathbb{E}(\eta_{m_2,k_2}^\gamma)\mathbb{E}(\eta_{m_3,k_3}^\delta)$$  for all integers $\alpha,\beta,\gamma,\delta$ such that $\alpha+\beta+\gamma+\delta=4$.
Then
\[\forall\; \epsilon>0; \qquad \lim_{n\to \infty}\mathbb{P}(|[x]_{\pi^n}(t)-\phi(t)|>\epsilon)=0.\]
Furthermore, if the sequence of partitions $\pi$ is complete refining and balanced and   $\phi$ has a bounded derivative then
\[x\in Q_\pi([0,1],\mathbb{R}) \text{ almost surely } \qquad \text{ and } \qquad \mathbb{P}\left([x]_\pi(t)= \phi(t)\right)=1.\]
\end{theorem}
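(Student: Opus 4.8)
The plan is to follow the same route as the proof of Theorem \ref{mainpart1}, substituting the unit weights there by the deterministic weights $w^{\pi,\phi}_{m,k}$ of \eqref{eq.w} and using the identities \eqref{identity1.timechange}, \eqref{identity2.timechange}, \eqref{identity3.timechange} in place of \eqref{identity1}, \eqref{identity2}, \eqref{identity3}. First I would write $[x]_{\pi^n}(t)$ via Theorem \ref{QV-weighted-schied} in terms of the deterministic coefficients $a^n_{m,k}(t)$, $b^n_{m,k,m',k'}(t)$ and the random variables $\eta_{m,k}$. Then I would establish $L^2$-convergence $[x]_{\pi^n}(t)\to\phi(t)$, which yields the in-probability statement; and finally, under the two extra hypotheses, upgrade to almost-sure convergence by a variance estimate and Borel--Cantelli.

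For the first and second moments: taking expectations and using $\mathbb{E}\eta_{m,k}=0$ and $\mathbb{E}(\eta_{m,k}\eta_{m',k'})=\mathbbm{1}_{m,m'}\mathbbm{1}_{k,k'}w^{\pi,\phi}_{m,k}$, one gets $\mathbb{E}[x]_{\pi^n}(t)=\sum_{m=0}^{n-1}\sum_k a^n_{m,k}(t)w^{\pi,\phi}_{m,k}\to\phi(t)$ by \eqref{identity1.timechange}. For the second moment, expand $\mathbb{E}\big([x]_{\pi^n}(t)\big)^2$: every term carrying an odd power of some $\eta_{m,k}$ vanishes because $\mathbb{E}\eta_{m,k}=0$ together with the fourth-order factorization hypothesis decouples the factors, and what survives regroups as
\[
\mathbb{E}\big([x]_{\pi^n}(t)\big)^2=\Big(\sum_{m,k}a^n_{m,k}(t)w^{\pi,\phi}_{m,k}\Big)^2+\sum_{m,k}\big(a^n_{m,k}(t)\big)^2\big(\mathbb{E}\eta_{m,k}^4-(w^{\pi,\phi}_{m,k})^2\big)+\sum_{(m,k)\neq(m',k')}\big(b^n_{m,k,m',k'}(t)\big)^2w^{\pi,\phi}_{m,k}w^{\pi,\phi}_{m',k'}.
\]
The first summand converges to $\phi(t)^2$ by \eqref{identity1.timechange}; the third vanishes by \eqref{identity3.timechange}; and the second is, thanks to control of the fourth moments relative to $(w^{\pi,\phi}_{m,k})^2$, dominated by a constant multiple of $\sum_{m,k}(a^n_{m,k}(t))^2(w^{\pi,\phi}_{m,k})^2$, which tends to $0$ by \eqref{identity2.timechange}. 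Hence $\mathbb{E}\big([x]_{\pi^n}(t)-\phi(t)\big)^2\to0$, which gives convergence in probability.

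For the almost-sure statement I would use the two additional assumptions. Boundedness of $\phi'$ gives, by a direct estimate on \eqref{eq.w}, $w^{\pi,\phi}_{m,k}\le\|\phi'\|_\infty$ uniformly in $m,k$. Then, exactly as in the proof of Theorem \ref{mainpart1}, $\mathrm{Var}([x]_{\pi^n}(t))$ is bounded by the last two sums displayed above; invoking Proposition \ref{boundOn_ab} (so that $\underline{\pi^n}\le a^n_{m,k}\le|\pi^n|$ and $|b^n_{m,k,m',k'}|\le C(|\pi^n|-\underline{\pi^n})\sqrt{|\pi^m|/|\pi^{m'}|}$), the balanced bound $|\pi^n|N(\pi^n)\le c$, and the complete-refining condition (which bounds $\sum_{m'<m}|\pi^m|/|\pi^{m'}|$ by a constant and forces the geometric decay $|\pi^n|\le(1+\epsilon)^{-n}|\pi^0|$), one obtains $\mathrm{Var}([x]_{\pi^n}(t))\le C_1|\pi^n|$. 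With $\epsilon_n=|\pi^n|^{1/4}$, Markov's inequality gives $\mathbb{P}(|[x]_{\pi^n}(t)-\phi(t)|\ge\epsilon_n)\le C\sqrt{|\pi^n|}$, which is summable in $n$; Borel--Cantelli then yields $[x]_{\pi^n}(t)\to\phi(t)$ almost surely for each fixed $t$. As in the proof of Theorem \ref{mainpart1}, this shows that $[x]_\pi(t)=\lim_n[x]_{\pi^n}(t)=\phi(t)$ almost surely, and since $\phi$ is continuous and increasing we conclude $x\in Q_\pi([0,1],\mathbb{R})$ almost surely.

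I expect the main obstacle to be the second summand $\sum_{m,k}(a^n_{m,k})^2\mathbb{E}\eta_{m,k}^4$ in the second-moment computation. Since identity \eqref{identity2.timechange} only controls $\sum_{m,k}(a^n_{m,k})^2(w^{\pi,\phi}_{m,k})^2$, one genuinely needs the fourth moments to be dominated by $(w^{\pi,\phi}_{m,k})^2$ up to a uniform constant; this holds automatically for the time-changed Brownian motion of Lemma \ref{Brownian.coeff.timechange} (where $\mathbb{E}\eta_{m,k}^4=3(w^{\pi,\phi}_{m,k})^2$) and, for the a.s.\ part, in the balanced/bounded-derivative regime where the $w^{\pi,\phi}_{m,k}$ are uniformly bounded. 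Making this domination precise, and carefully verifying that the odd-order cross terms vanish under the stated factorization of fourth moments, is the only non-routine step; the rest is a direct transcription of the unit-weight argument.
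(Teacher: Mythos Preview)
Your proposal is correct and follows exactly the route the paper indicates: it mirrors the proof of Theorem \ref{mainpart1}, replacing the unit weights by $w^{\pi,\phi}_{m,k}$ and invoking identities \eqref{identity1.timechange}--\eqref{identity3.timechange} in place of \eqref{identity1}--\eqref{identity3}, and for the almost-sure part uses boundedness of $\phi'$ together with the balanced hypothesis to bound the weights uniformly before repeating the variance/Borel--Cantelli argument. Your flagged obstacle regarding the control of $\sum_{m,k}(a^n_{m,k})^2\mathbb{E}\eta_{m,k}^4$ via \eqref{identity2.timechange} is a genuine technical point that the paper itself does not make explicit, so your treatment is in fact more careful than the paper's sketch.
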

\begin{proof}
We skip the proof of the above theorem as the proof is very similar to the proof of Theorem \ref{mainpart1}.  The proof in particular uses Identity \eqref{identity1.timechange}, \eqref{identity2.timechange}, \eqref{identity3.timechange}.
For the proof of almost sure convergence, we use the fact that if $\phi$ has bounded derivatives and $\pi$ is balanced, then the weights $ w^{\pi,\phi}_{m,k}$ are almost surely bounded as well.
\end{proof}
The assumptions of $\pi$ and $\phi$ for almost sure convergence in Theorem \ref{mainpart1.timechange} are sufficient conditions but not necessary. To summarise, for any finitely refining sequence of partitions $\pi$ and for any continuous increasing function $\phi$ with $\phi(0)=0$, define the class of processes $\mathcal{B}_0^{\pi,\phi}$ as follows.
\ba\mathcal{B}_0^{\pi,\phi} = \Bigg\{x:\Omega\times [0,1]
\mapsto \mathbb{R}:\; x(t) = x(0)+ \left(x(1)-x(0) \right)t+ \sum_{m=0}^{\infty} \sum_{k=0}^{N(\pi^{m+1})-N(\pi^m)} \eta_{m,k} e^{\pi}_{m,k}(t) \qquad \qquad \nonumber\\ \text{ with, }\; \mathbb{E}(\eta_{m,k}) = 0,\; \mathbb{E}(\eta_{m,k}\eta_{m',k'}) = \delta_{m,m'}\delta_{k,k'}w_{m,k},\; \mathbb{E}(\eta_{m,k}^4)\leq M<\infty,\; \text{ and, }\qquad\qquad \\\mathbb{E}(\eta_{m,k}^\alpha\eta_{m_1,k_1}^\beta\eta_{m_2,k_2}^\gamma\eta_{m_3,k_3}^\delta) =\mathbb{E}(\eta_{m,k}^\alpha)\mathbb{E}(\eta_{m_1,k_1}^\beta)\mathbb{E}(\eta_{m_2,k_2}^\gamma)\mathbb{E}(\eta_{m_3,k_3}^\delta) \text{ whenever int. }\alpha+\beta+\gamma+\delta=4  \Bigg\}.\nonumber\ea
Then for any $x\in \mathcal{B}_0^{\pi,\phi} $, we have $[x]_{\pi^n}(t) \to \phi(t) $ in probability. If $\pi$ is also balanced, complete refining and the continuous increasing function $\phi$ has $\phi(0)=0$ and bounded derivatives then the convergence is in an almost sure sense.
\begin{corollary}
Let $\pi$ be any finitely refining sequence of partition and $\phi\in C^0([0,1],\mathbb{R})$ be an increasing function with $\phi(0)=0$. Then the time changed Brownian motion defined as $Y(t)=W(\phi(t))$ belongs to the class $\mathcal{B}_0^{\pi,\phi}$
\end{corollary}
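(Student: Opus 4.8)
The plan is to verify, one by one, that the Schauder coefficients of $Y(t)=W(\phi(t))$ satisfy the three defining moment requirements of $\mathcal B_0^{\pi,\phi}$, reading off their joint law directly from Lemma~\ref{Brownian.coeff.timechange}. By that lemma, $Y$ admits the expansion
\[ Y(t)=Y(0)+(Y(1)-Y(0))t+\sum_{m=0}^{\infty}\sum_{k=1}^{N(\pi^{m+1})-N(\pi^m)}\eta_{m,k}(Y)\,e^{\pi}_{m,k}(t),\]
with the $\eta_{m,k}(Y)$ \emph{independent} and $\eta_{m,k}(Y)\sim\mathcal N(0,w^{\pi,\phi}_{m,k})$, where $w^{\pi,\phi}_{m,k}$ is given by \eqref{eq.w}. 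This already puts $Y$ in the structural form appearing in the definition of $\mathcal B_0^{\pi,\phi}$, so only the moment conditions remain. The centring $\mathbb E\,\eta_{m,k}(Y)=0$ gives the first condition, and independence together with $\mathrm{Var}(\eta_{m,k}(Y))=w^{\pi,\phi}_{m,k}$ gives $\mathbb E[\eta_{m,k}(Y)\,\eta_{m',k'}(Y)]=\delta_{m,m'}\delta_{k,k'}\,w^{\pi,\phi}_{m,k}$, which is exactly the prescribed covariance structure with weights $w_{m,k}=w^{\pi,\phi}_{m,k}$.

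Next I would treat the fourth-order conditions. For each fixed $(m,k)$, since $\eta_{m,k}(Y)$ is centred Gaussian, $\mathbb E[\eta_{m,k}(Y)^4]=3\,(w^{\pi,\phi}_{m,k})^2<\infty$, so finiteness is automatic. The mixed product-moment identity $\mathbb E(\eta_{m,k}^{\alpha}\eta_{m_1,k_1}^{\beta}\eta_{m_2,k_2}^{\gamma}\eta_{m_3,k_3}^{\delta})=\prod\mathbb E(\eta_{\cdot}^{\cdot})$ for $\alpha+\beta+\gamma+\delta=4$ holds whenever the four indices are distinct, simply by independence of the family $(\eta_{m,k}(Y))$; this is the form in which it is actually used in the proof of Theorem~\ref{mainpart1}. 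The one genuinely substantive point is the \emph{uniform} bound $\sup_{m,k}\mathbb E[\eta_{m,k}(Y)^4]\le M$, equivalently $\sup_{m,k}w^{\pi,\phi}_{m,k}<\infty$. Here I would invoke the regularity of $\phi$: if $\phi$ is Lipschitz with constant $L$ (in particular if $\phi'$ is bounded by $L$, the standing hypothesis used for the almost-sure statement in Theorem~\ref{mainpart1.timechange}), then $\phi(t^{m,k}_2)-\phi(t^{m,k}_1)\le L(t^{m,k}_2-t^{m,k}_1)$ and $\phi(t^{m,k}_3)-\phi(t^{m,k}_2)\le L(t^{m,k}_3-t^{m,k}_2)$, so the numerator of \eqref{eq.w} is at most $L(t^{m,k}_2-t^{m,k}_1)(t^{m,k}_3-t^{m,k}_2)(t^{m,k}_3-t^{m,k}_1)$, which after dividing by the denominator yields $w^{\pi,\phi}_{m,k}\le L$ uniformly in $(m,k)$, hence $\mathbb E[\eta_{m,k}(Y)^4]\le 3L^2$.

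The hard part is precisely this last estimate — controlling the growth of the variances $w^{\pi,\phi}_{m,k}$ by the modulus of continuity of $\phi$ — everything else being a direct transcription of Lemma~\ref{Brownian.coeff.timechange} plus elementary Gaussian identities. One should record the caveat that continuity of $\phi$ alone is not enough for the uniform bound: for instance $\phi(t)=\sqrt{t}$ along the dyadic partitions gives, for the Schauder function supported on $[0,2^{-m}]$, a weight $w^{\pi,\phi}_{m,k}$ of order $2^{m/2}$, so the fourth moments are unbounded there. Thus I would state and prove the corollary under the Lipschitz (bounded-derivative) hypothesis on $\phi$; if one only requires $\mathbb E[\eta_{m,k}(Y)^4]<\infty$ for each fixed $(m,k)$ rather than uniformly, then $Y\in\mathcal B_0^{\pi,\phi}$ holds for every continuous increasing $\phi$ with $\phi(0)=0$ with no further work, the Gaussian fourth moment being automatically finite.
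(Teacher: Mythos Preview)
The paper states this corollary without proof, treating it as an immediate consequence of Lemma~\ref{Brownian.coeff.timechange} (which gives that the Schauder coefficients of $Y$ are independent $\mathcal N(0,w^{\pi,\phi}_{m,k})$) together with the definition of $\mathcal B_0^{\pi,\phi}$. Your verification of the mean, covariance and factorisation conditions is exactly this implicit argument, so on those points you match the paper.

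Where you go further than the paper is in flagging the uniform fourth-moment condition $\sup_{m,k}\mathbb E[\eta_{m,k}^4]\le M<\infty$ in the definition of $\mathcal B_0^{\pi,\phi}$. Your observation is correct: this amounts to $\sup_{m,k}w^{\pi,\phi}_{m,k}<\infty$, and your $\phi(t)=\sqrt t$ counterexample along the dyadic partition genuinely shows this can fail for a merely continuous increasing $\phi$. The paper itself acknowledges this issue only obliquely, remarking after Theorem~\ref{mainpart1.timechange} that ``if $\phi$ has bounded derivatives and $\pi$ is balanced, then the weights $w^{\pi,\phi}_{m,k}$ are almost surely bounded,'' but the corollary as stated does not carry this hypothesis. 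So your caveat is well placed: the corollary holds cleanly under a Lipschitz assumption on $\phi$ (your bound $w^{\pi,\phi}_{m,k}\le L$ is the right computation), while under bare continuity one only gets $\mathbb E[\eta_{m,k}^4]<\infty$ for each fixed $(m,k)$, which is weaker than what the class $\mathcal B_0^{\pi,\phi}$ formally demands.
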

\begin{corollary}
For any balanced complete refining sequence of partitions $\pi$ and for any increasing $\phi\in C^0([0,1],\mathbb{R})$  with bounded derivatives, we have $\mathcal{B}_0^{\pi,\phi} \subset Q_\pi([0,1],\mathbb{R})$ almost surely.
\end{corollary}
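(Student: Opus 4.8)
The statement is essentially a qualitative repackaging of Theorem \ref{mainpart1.timechange}: the point is to check that, under the stated hypotheses, a path $x\in\mathcal{B}_0^{\pi,\phi}$ satisfies the \emph{full} set of requirements of Definition \ref{def.pathwiseQV}, not just the pointwise-in-$t$ convergence $[x]_{\pi^n}(t)\to\phi(t)$. Fix $x\in\mathcal{B}_0^{\pi,\phi}$. Since the Schauder series defining $x$ converges uniformly on $[0,1]$ (its partial sums are the interpolations $x^N$, which converge uniformly because $|\pi^n|\to0$), the path $x$ is almost surely continuous, so $\Delta x\equiv0$ and $[x]_\pi^c=[x]_\pi$. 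By Proposition \ref{prop.cont.qv} it therefore suffices to prove that almost surely $[x]_{\pi^n}(\cdot)\to\phi$ \emph{uniformly} on $[0,1]$; this automatically produces the weak convergence of the associated measures to the Radon measure with distribution function $\phi$, and $\phi$ is continuous and increasing by hypothesis.

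The plan is as follows. First, apply Theorem \ref{mainpart1.timechange}: since $\pi$ is balanced and complete refining and $\phi$ has bounded derivative, for each fixed $t\in[0,1]$ one has $[x]_{\pi^n}(t)\to\phi(t)$ almost surely. Apply this along the countable set $D=\bigcup_{n\ge1}\pi^n$, which is dense in $[0,1]$ because $|\pi^n|\to0$; intersecting the corresponding countably many full-probability events yields a single event $\Omega_0$ with $\mathbb{P}(\Omega_0)=1$ on which $[x]_{\pi^n}(t)\to\phi(t)$ for all $t\in D$ simultaneously. Second, upgrade this to uniform convergence: each function $t\mapsto[x]_{\pi^n}(t)=\sum_i\big(x(t^n_{i+1}\wedge t)-x(t^n_i\wedge t)\big)^2$ is non-decreasing, the limit $\phi$ is continuous and non-decreasing, and pointwise convergence of a sequence of monotone functions to a continuous monotone limit on a dense subset of a compact interval forces uniform convergence (P\'olya's theorem). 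Hence on $\Omega_0$ we obtain $\sup_{t\in[0,1]}|[x]_{\pi^n}(t)-\phi(t)|\to0$, and Proposition \ref{prop.cont.qv} gives $x\in Q_\pi([0,1],\mathbb{R})$ with $[x]_\pi=\phi$ on $\Omega_0$. Since $x\in\mathcal{B}_0^{\pi,\phi}$ was arbitrary, $\mathcal{B}_0^{\pi,\phi}\subset Q_\pi([0,1],\mathbb{R})$ almost surely.

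The only genuinely delicate point is the interchange of the quantifiers ``for all $t$'' and ``almost surely'': Theorem \ref{mainpart1.timechange} a priori provides a null set depending on $t$, whereas membership in $Q_\pi$ requires a single null set valid for all $t$. This is exactly what the countable-dense-set-plus-monotonicity argument circumvents, using essentially that the pre-limit objects $[x]_{\pi^n}$ are monotone and the limit $\phi$ is continuous. The remaining ingredients are routine and already contained in the proof of Theorem \ref{mainpart1.timechange}: the boundedness of $\phi'$ together with the balancedness of $\pi$ makes the weights $w^{\pi,\phi}_{m,k}$ uniformly bounded, which via \eqref{identity2.timechange}, \eqref{identity3.timechange} and a variance estimate $\mathrm{Var}([x]_{\pi^n}(t))\le C|\pi^n|$ drives the Borel--Cantelli step underlying the almost-sure pointwise convergence, while the complete-refining hypothesis guarantees $\sum_n\sqrt{|\pi^n|}<\infty$ so that Borel--Cantelli applies.
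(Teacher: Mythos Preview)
Your overall approach matches the paper's: the corollary is stated there without proof, as an immediate consequence of the second assertion of Theorem \ref{mainpart1.timechange}, which already says $x\in Q_\pi([0,1],\mathbb{R})$ almost surely under precisely these hypotheses. You go further than the paper by trying to justify the passage from the pointwise-in-$t$ almost-sure convergence (which is all the proof of Theorem \ref{mainpart1}/\ref{mainpart1.timechange} actually establishes via Borel--Cantelli) to the uniform convergence required by Proposition \ref{prop.cont.qv}. That is a genuine subtlety the paper glosses over, and your dense-set-plus-monotonicity strategy is the natural way to close it.

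There is, however, a flaw in the execution: the claim that $t\mapsto[x]_{\pi^n}(t)$ is non-decreasing is false. For $t\in[t^n_j,t^n_{j+1})$ one has
\[
[x]_{\pi^n}(t)=\sum_{i<j}\big(x(t^n_{i+1})-x(t^n_i)\big)^2+\big(x(t)-x(t^n_j)\big)^2,
\]
and the last term can decrease on the interval if $x$ oscillates there (e.g.\ if $x$ returns to $x(t^n_j)$ before reaching $t^n_{j+1}$). The repair is minor: work instead with the step version $\widetilde{[x]}_{\pi^n}(t):=[x]_{\pi^n}(t^n_j)$ for $t\in[t^n_j,t^n_{j+1})$, which \emph{is} non-decreasing and coincides with $[x]_{\pi^n}$ on $\pi^n$. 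Since $x$ is almost surely uniformly continuous on $[0,1]$, one has $\sup_t|[x]_{\pi^n}(t)-\widetilde{[x]}_{\pi^n}(t)|\le \omega_x(|\pi^n|)^2\to 0$. The P\'olya argument applied to $\widetilde{[x]}_{\pi^n}$ on the countable dense set $D=\bigcup_n\pi^n$ then gives uniform convergence to $\phi$, and hence uniform convergence of $[x]_{\pi^n}$ as well. With this correction your argument is complete and in fact more careful than the paper's.
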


\section{A class of processes with quadratic variation invariant under coarsening}\label{sec5}
The quadratic variation of a path along a sequence of partitions strongly depends on the chosen sequence of partitions. As shown by Freedman \cite[p. 47]{freedman}, given any continuous function, one can always construct a sequence of partitions along which the  quadratic variation is  zero.
This result has been extended by Davis et al. \cite{davis2018} where they have shown that, 
given any continuous path $x\in C^0([0,T],\mathbb{R})$ and any increasing function $A:[0,T]\to\mathbb{R}_+$ (not necessarily continuous) one can construct a partition sequence $\pi$ such that $ [x]_{\pi}=A$. Another result by Schied \cite{schied2016b} provides a way to construct a vector space of functions with a prescribed quadratic variation. Notwithstanding these negative results, the quadratic variation of a function along a sequence of partitions $\pi$ is always the same as that along any subsequences of $\pi$ and the recent paper \cite{das2020} also identifies a class of partitions and a class of $d$-dimensional paths where  quadratic variation is partition invariant. In this section, we shall identify a class of processes $x$ for which $[x]_\pi$ is uniquely defined across any coarsening of the initial finitely refining partition $\pi$. 

One main difficulty in comparing the quadratic variation along two different partition sequences is the lack of structural similarity between the two sequences of partitions and/or lack of local bounds on the number of partition intervals.

For Brownian motion almost surely for any refining sequence of partitions $\pi$ the quadratic variation is linear and same across partitions, i.e. $\mathbb{P}([W]_\pi(t)=t)=1$. Now from Lemma \ref{Brownian.coeff} we can see along any finitely refining partition sequence $\pi$ the coefficients $\eta^\pi_{m,k}$ for Brownian motion are IID $\mathcal{N}(0,1)$. So for two finitely refining partition sequences $\pi$ and $\sigma$, if we compare the corresponding Schauder basis coefficients $\eta^\pi_{m,k}$ and $\eta^\sigma_{m,k}$ for Brownian motions, both of them have the same distribution. This uniformity of coefficients of Brownian motion contributes towards partition sequence independent quadratic variation of Brownian motions.  
\par In this section, we provide a class of `rough' continuous processes for which the Schauder expansion has similar properties across certain `related' sequences of refining partitions. As expected, our `rough' class contains Brownian motion but also contains processes that are smoother than Brownian motion in terms of H\"older continuity.  

\subsection{Invariance of quadratic variation}
\paragraph{Coarsening} A partition may be refined by adding points to it. The inverse operation, which we  call coarsening, corresponds  to  removing  points  i.e. subsampling or grouping of partition points. We will be specifically interested in coarsening that preserve the finitely refining property but may modify the asymptotic rate of decrease of the mesh size:

\begin{definition}[Coarsening of a partition sequence]\label{def.coarsening}
Let $\pi^n=(0=t^{n}_0<t^n_1< \cdots<t^{n}_{N(\pi^n)}=T)$ be a finitely refining sequence of partitions of $[0,T]$ with vanishing mesh $|\pi^n|\to 0$. A coarsening of $\pi$ is a sequence of subpartitions of $\pi^n$:
$$ A^n=(0=t^n_{p(n,0)}<t^n_{p(n,1)}<\cdots <t^n_{p(n,N(A^n))}=T), $$
 such that $(A^n)_{n\geq 1}$ is a finitely refining partition sequence of $[0,T]$.
\end{definition}
\begin{remark}
$t\in A^n$ implies $t\in \pi^n$. Also if $\sigma=(\sigma^n)_{n\geq 1}$ is a coarsening of $\pi=(\pi^n)_{n\geq 1}$, then  for any subsequence $\tau=(\pi^{K(n)})_{n\geq 1}$ of $\pi$; $\sigma^{K(n)}$ is also a coarsening of $\tau$.
\end{remark}  
\par Take $\pi$ be a finitely refining sequence of partitions of $[0,1]$ and take $\sigma=(\sigma^n)_{n\geq 1}$ to be a coarsening of $\pi$. Let $x\in C^0([0,1],\mathbb{R})$. 
Then the $x$ can be expanded along the non-uniform  Schauder system corresponding to partition sequences $\pi$ and $\sigma$ respectively i.e.
\[x(t) =x(0)+(x(1)-x(0))t+ \sum_{m=0}^{\infty}\sum_{k=1}^{N(\pi^{m+1})-N(\pi^m)}\eta_{m,k}e^{\pi}_{m,k}(t) \]
\[=x(0)+(x(1)-x(0))t+\sum_{j=0}^{\infty}\sum_{l=1}^{N(\sigma^{j+1})-N(\sigma^j)}\theta_{j,l}e^{\sigma}_{j,l}(t),\]
where, $\{\eta_{m,k}\} $ and $\{\theta_{j,l}\}$ are corresponding coefficients of the Schauder system expansion along sequence of partition $\pi$ and $\sigma$ respectively. If the support of the function $e_{j,l}^\sigma$ is $[s^{j,l}_1,s^{j,l}_3]$ and its maximum is attained at time $s^{j,l}_2$ then, the coefficient $\theta_{j,l}$ has a closed form representation as follows (Proposition \ref{coeff_hat_func}):
\[\theta_{j,l} = \frac{\bigg[\left(x(s^{j,l}_{2})-x(s^{j,l}_{1})\right)(s^{j,l}_{3}-s^{j,l}_{2})-\left(x(s^{j,l}_{3})- x(s^{j,l}_{2})\right)(s^{j,l}_{2}-s^{j,l}_1) \bigg]}{\sqrt{(s^{j,l}_2-s^{j,l}_1)(s^{j,l}_3-s^{j,l}_2)(s^{j,l}_3-s^{j,l}_1)}} \]
\[=\Bigg[\frac{(s^{j,l}_{3}-s^{j,l}_{2})\left(\sum_{m=0}^{\infty}\sum_{k=1}^{N(\pi^{m+1})-N(\pi^m)}\eta_{m,k}\left(e^{\pi}_{m,k}(s^{j,l}_2)-e^{\pi}_{m,k}(s^{j,l}_1)\right)\right)}{\sqrt{(s^{j,l}_2-s^{j,l}_1)(s^{j,l}_3-s^{j,l}_2)(s^{j,l}_3-s^{j,l}_1)}}
\]
\[-\frac{(s^{j,l}_{2}-s^{j,l}_1)\left(\sum_{m=0}^{\infty}\sum_{k=1}^{N(\pi^{m+1})-N(\pi^m)}\eta_{m,k}\left(e^{\pi}_{m,k}(s^{j,l}_3)-e^{\pi}_{m,k}(s^{j,l}_2)\right)\right) }{\sqrt{(s^{j,l}_2-s^{j,l}_1)(s^{j,l}_3-s^{j,l}_2)(s^{j,l}_3-s^{j,l}_1)}}\Bigg]\]
\[=\sum_{m=0}^{\infty}\sum_{k=1}^{N(\pi^{m+1})-N(\pi^m)} \left[\frac{(s^{j,l}_{3}-s^{j,l}_{2})\left(e^{\pi}_{m,k}(s^{j,l}_2)-e^{\pi}_{m,k}(s^{j,l}_1)\right)-(s^{j,l}_{2}-s^{j,l}_{1})\left(e^{\pi}_{m,k}(s^{j,l}_3)-e^{\pi}_{m,k}(s^{j,l}_2)\right)}{\sqrt{(s^{j,l}_2-s^{j,l}_1)(s^{j,l}_3-s^{j,l}_2)(s^{j,l}_3-s^{j,l}_1)}} \right]\eta_{m,k}. \]
Denote \begin{equation}
 A^{m,k}_{j,l} =\frac{(s^{j,l}_{3}-s^{j,l}_{2})\left(e^{\pi}_{m,k}(s^{j,l}_2)-e^{\pi}_{m,k}(s^{j,l}_1)\right)-(s^{j,l}_{2}-s^{j,l}_{1})\left(e^{\pi}_{m,k}(s^{j,l}_3)-e^{\pi}_{m,k}(s^{j,l}_2)\right)}{\sqrt{(s^{j,l}_2-s^{j,l}_1)(s^{j,l}_3-s^{j,l}_2)(s^{j,l}_3-s^{j,l}_1)}}.\label{eq.A}\end{equation}
Since the function $e^\pi_{m,k}$ only depends on $\pi$ not on the path $x\in C^0([0,1],\mathbb{R})$, the coefficient $A^{m,k}_{j,l}$ only depends on the refining partitions $\sigma$ and $\pi$ but not on the continuous path $x$. So the expression for $\theta_{j,l}$ can be represented as an infinite expansion of $\eta$'s.
\begin{equation}\label{coff}
  \theta_{j,l} =\sum_{m=0}^{\infty}\sum_{k=1}^{N(\pi^{m+1})-N(\pi^m)}A^{m,k}_{j,l}\eta_{m,k}.
\end{equation}
The above equation holds for any two finitely refining partitions, but since $\sigma$ is a coarsening of $\pi$, $A^{m,k}_{j,l}=0$ for all $m>j+1$, $\forall l,k$. So the Equation \eqref{coff} reduces to:
\begin{equation}\label{coff2}
  \theta_{j,l} =\sum_{m=0}^{j+1}\sum_{k=1}^{N(\pi^{m+1})-N(\pi^m)}A^{m,k}_{j,l}\eta_{m,k}.
\end{equation}
Now if we take the path $x$ to be a typical path of Brownian motion, then $\eta_{m,k}\sim^{IID}\mathcal{N}(0,1)$ and
$\theta_{j,l}\sim^{IID}\mathcal{N}(0,1)$. So,
\[\mathbb{E}\theta^2_{j,l} = 1\]
\[\implies\mathbb{E}\left[\sum_{m=0}^{j+1}\sum_{k=1}^{N(\pi^{m+1})-N(\pi^m)}A^{m,k}_{j,l}\eta_{m,k}\right]^2= 1.\]
For Brownian motion $\mathbb{E}\eta_{m,k}\eta_{m',k'} = \mathbb{E}\delta_{m,m'}\delta_{k,k'} = \mathbbm{1}_{m=m'}\mathbbm{1}_{k=k'}$ and for any fixed pair $(j,l)$, the above sum is a finite sum. So the above equality reduces to: 
\[\left[\sum_{m=0}^{j+1}\sum_{k=1}^{N(\pi^{m+1})-N(\pi^m)}\sum_{m'=0}^{j+1}\sum_{k'=1}^{N(\pi^{m'+1})-N(\pi^{m'})}(A^{m,k}_{j,l})(A^{m',k'}_{j,l})\mathbb{E}\left(\eta_{m,k}\eta_{m',k'}\right)\right]= 1\]
\[\implies \sum_{m=0}^{j+1}\sum_{k=1}^{N(\pi^{m+1})-N(\pi^m)}(A^{m,k}_{j,l})^2\mathbb{E}\eta^2_{m,k}= 1\]
\begin{equation}\label{eq1}\implies\sum_{m=0}^{j+1}\sum_{k=1}^{N(\pi^{m+1})-N(\pi^m)}(A^{m,k}_{j,l})^2=1. \end{equation}
Similarly, for Brownian motion the cross-correlations of the coefficients are $0$. So for pairs $(j,l)\neq(j',l')$:
\[\mathbb{E}(\theta_{j,l}\theta_{j',l'} )= 0\]
\[\implies E\left[\left(\sum_{m=0}^{j+1}\sum_{k=1}^{N(\pi^{m+1})-N(\pi^m)}A^{m,k}_{j,l}\eta_{m,k}\right) \left(\sum_{m'=0}^{j'+1}\sum_{k'=1}^{N(\pi^{m'+1})-N(\pi^{m'})}A^{m',k'}_{j',l'}\eta_{m',k'}\right) \right] = 0\]
\[\implies\left[\sum_{m=0}^{(j\wedge j') +1}\sum_{k=1}^{N(\pi^{m+1})-N(\pi^m)}A^{m,k}_{j,l}A^{m,k}_{j',l'}\left(\mathbb{E}\eta_{m,k}^2\right) \right] = 0\]
\begin{equation}\label{eq2}
\implies\sum_{m=0}^{(j\wedge j') +1}\sum_{k=1}^{N(\pi^{m+1})-N(\pi^m)}A^{m,k}_{j,l}A^{m,k}_{j',l'} = 0 . 
\end{equation}
Comparing the fourth moment of the coefficient $\theta_{j,l}$ for Brownian paths we get:
\[\mathbb{E}\theta_{j,l}^4 = 3\left(\mathbb{E}\theta_{j,l}^2\right)^2\]
\[\implies \mathbb{E}\left(\sum_{m=0}^{j+1}\sum_{k=1}^{N(\pi^{m+1})-N(\pi^m)}A^{m,k}_{j,l}\eta_{m,k}\right)^4 = 3\]
\[\implies \mathbb{E}\left(\sum_{m=0}^{j+1}\sum_{k=1}^{N(\pi^{m+1})-N(\pi^m)}(A^{m,k}_{j,l})^4\eta^4_{m,k} + \sum_{m,k} \sum_{\substack{m','k\\ (m,k)\neq (m',k')}}(A^{m,k}_{j,l})^2(A^{m',k'}_{j,l})^2\eta^2_{m,k}\eta^2_{m',k'}\right) = 3\]
\[\implies \left(\sum_{m=0}^{j+1}\sum_{k=1}^{N(\pi^{m+1})-N(\pi^m)}3(A^{m,k}_{j,l})^4 + \sum_{m,k} \sum_{\substack{m','k\\ (m,k)\neq (m',k')}}(A^{m,k}_{j,l})^2(A^{m',k'}_{j,l})^2\right) = 3\]
\[\implies \left[\sum_{m=0}^{j+1}\sum_{k=1}^{N(\pi^{m+1})-N(\pi^m)}2(A^{m,k}_{j,l})^4 +\left( \sum_{m,k}(A^{m,k}_{j,l})^2\right)\left( \sum_{m','k}(A^{m',k'}_{j,l})^2\right)\right] = 3.\]
Substituting Equation \eqref{eq1} twice in the second sum we get the following identity:
\begin{equation}\label{eq3}
  \sum_{m=0}^{j+1}\sum_{k=1}^{N(\pi^{m+1})-N(\pi^m)}(A^{m,k}_{j,l})^4 =1.
\end{equation}
Similarly, exploring the uncorrelated property of the coefficients $\theta$ for Brownian motion leads to the following equalities:
\begin{equation}\label{eq4}
   \sum_{m=0}^{(j\wedge j') +1}\sum_{k=1}^{N(\pi^{m+1})-N(\pi^m)}\left((A^{m,k}_{j,l})^2(A^{m,k}_{j',l'})^2\right) =0 \quad\text{ and,}
\end{equation}
\begin{equation}\label{eq6}
   \sum_{m=0}^{(j\wedge j') +1}\sum_{k=1}^{N(\pi^{m+1})-N(\pi^m)}\left((A^{m,k}_{j,l})^3(A^{m,k}_{j',l'})\right) =0 \quad\text{ and,}
\end{equation}
\begin{equation}\label{eq7}
   \sum_{m=0}^{j\wedge j' \wedge j_1 +1}\sum_{k=1}^{N(\pi^{m+1})-N(\pi^m)}\left((A^{m,k}_{j,l})^2(A^{m,k}_{j',l'})(A^{m,k}_{j_1,l_1})\right) =0 \quad\text{ and,}
\end{equation}
\begin{equation}\label{eq8}
   \sum_{m=0}^{j \wedge j' \wedge j_1\wedge j_2 +1}\sum_{k=1}^{N(\pi^{m+1})-N(\pi^m)}\left((A^{m,k}_{j,l})(A^{m,k}_{j',l'})(A^{m,k}_{j_1,l_1})(A^{m,k}_{j_2,l_2})\right) =0 .
\end{equation}

The following theorem provides properties of  Schauder coefficients represented along two different partition sequences which are coarsening of each other. 

\begin{theorem}\label{mainpart2}
Let $\pi$ be a finitely refining sequence of partitions of $[0,1]$ and  $\sigma=(\sigma^n)_{n\geq 1}$ be  a coarsening of $\pi$. Define for $t\in [0,1]$
\[x(t) =x(0)+(x(1)-x(0))t+ \sum_{m=0}^{\infty}\sum_{k=1}^{N(\pi^{m+1})-N(\pi^m)}\eta_{m,k}e^{\pi}_{m,k}(t)\]
where \begin{equation}
\mathbb{E}\eta_{m,k} = 0,\; \mathbb{E}\eta_{m,k}\eta_{m',k'} = \mathbbm{1}_{m,m'} \mathbbm{1}_{k,k'},\qquad \mathbb{E}\eta_{m,k}^4=M<\infty\qquad {\rm and}
    \label{eq.property1}
\end{equation}
\begin{equation}\mathbb{E}(\eta_{m,k}^\alpha\eta_{m_1,k_1}^\beta\eta_{m_2,k_2}^\gamma\eta_{m_3,k_3}^\delta) =\mathbb{E}(\eta_{m,k}^\alpha)\mathbb{E}(\eta_{m_1,k_1}^\beta)\mathbb{E}(\eta_{m_2,k_2}^\gamma)\mathbb{E}(\eta_{m_3,k_3}^\delta)\label{eq.property2}
\end{equation} for all integer exponents $\alpha,\beta,\gamma,\delta$ satisfying $\alpha+\beta+\gamma+\delta=4$.
Then $(\theta_{j,l}, j\in \mathbb{N}, 1\leq l\leq N(\sigma^{l+1})-N(\sigma^l) )$ defined by Equations \eqref{eq.A}-\eqref{coff2} also satisfies the properties \eqref{eq.property1}-\eqref{eq.property2}.
\end{theorem}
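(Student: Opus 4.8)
The plan is to reduce the whole statement to the deterministic identities \eqref{eq1}--\eqref{eq8}. The decisive structural fact, already observed above, is that the numbers $A^{m,k}_{j,l}$ of \eqref{eq.A} appearing in the change-of-basis relation \eqref{coff2} depend only on $\pi$ and $\sigma$, not on the path; consequently every joint moment of $\{\theta_{j,l}\}$ is a fixed polynomial in the $A^{m,k}_{j,l}$ and in the joint moments of $\{\eta_{m,k}\}$. The identities \eqref{eq1}--\eqref{eq8} were extracted precisely by specialising \eqref{coff2} to a typical Brownian path: by Lemma~\ref{Brownian.coeff} applied to both $\pi$ and its coarsening $\sigma$, the families $\{\eta_{m,k}\}$ and $\{\theta_{j,l}\}$ are then both IID $\mathcal{N}(0,1)$, and matching the moments of $\theta_{j,l}$ computed from \eqref{coff2} with the Gaussian ones forces \eqref{eq1}--\eqref{eq8}. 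Now notice that hypotheses \eqref{eq.property1}--\eqref{eq.property2} say exactly that the joint moments of $\{\eta_{m,k}\}$ up to order four agree with those of an IID $\mathcal{N}(0,1)$ family, with the two deviations that the common diagonal fourth moment is $M$ rather than $3$ and that $\mathbb{E}\eta_{m,k}^3$ is left free. The latter is immaterial: in a product of total degree four a factor $\eta_{m,k}^3$ is accompanied by a single factor $\eta_{m',k'}$ with $(m',k')\neq(m,k)$, so by \eqref{eq.property2} its expectation carries $\mathbb{E}\eta_{m',k'}=0$. Hence, for any joint moment of $\{\theta_{j,l}\}$ of order at most four, the only $\eta$-moment that can differ from the Gaussian case is $\mathbb{E}\eta_{m,k}^4$, and it enters only through the term in which all four $\eta$'s carry the same index $(m,k)$.

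It follows that every such $\theta$-moment satisfies
\[
\mathbb{E}\big(\theta_{j,l}^{\alpha}\theta_{j_1,l_1}^{\beta}\theta_{j_2,l_2}^{\gamma}\theta_{j_3,l_3}^{\delta}\big)
=\big(\text{its value in the Brownian case}\big)
+(M-3)\sum_{m,k}(A^{m,k}_{j,l})^{\alpha}(A^{m,k}_{j_1,l_1})^{\beta}(A^{m,k}_{j_2,l_2})^{\gamma}(A^{m,k}_{j_3,l_3})^{\delta}.
\]
For moments of order at most two, $\mathbb{E}\eta_{m,k}^4$ cannot occur, so the correction vanishes and we read off $\mathbb{E}\theta_{j,l}=0$ and $\mathbb{E}(\theta_{j,l}\theta_{j',l'})=\mathbbm{1}_{j,j'}\mathbbm{1}_{l,l'}$ (which also follow at once from $\mathbb{E}\eta_{m,k}=0$, $\mathbb{E}(\eta_{m,k}\eta_{m',k'})=\mathbbm{1}_{m,m'}\mathbbm{1}_{k,k'}$ and \eqref{eq1}--\eqref{eq2}). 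For the diagonal fourth moment the correction coefficient is $\sum_{m,k}(A^{m,k}_{j,l})^4$, which equals $1$ by \eqref{eq3}, so $\mathbb{E}\theta_{j,l}^4=3+(M-3)=M<\infty$; thus \eqref{eq.property1} passes to $\{\theta_{j,l}\}$. It is \eqref{eq3} that makes this come out right --- a generic orthogonal change of basis would not have $\sum_{m,k}(A^{m,k}_{j,l})^4=1$, and then $\mathbb{E}\theta_{j,l}^4$ would not be $M$.

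It remains to verify \eqref{eq.property2} for $\{\theta_{j,l}\}$, the part I expect to require the most care. I would fix pairwise distinct multi-indices $(j,l),(j_1,l_1),(j_2,l_2),(j_3,l_3)$ and exponents $\alpha+\beta+\gamma+\delta=4$ and argue as follows. Since at least two multi-indices genuinely occur, no single exponent reaches $4$, so $\mathbb{E}\eta_{m,k}^4$ enters only through the coefficient $\sum_{m,k}(A^{m,k}_{j,l})^{\alpha}(A^{m,k}_{j_1,l_1})^{\beta}\cdots$, which according to whether the exponent pattern is $(3,1)$, $(2,2)$, $(2,1,1)$ or $(1,1,1,1)$ is respectively the left-hand side of \eqref{eq6}, \eqref{eq4}, \eqref{eq7} or \eqref{eq8} --- hence zero. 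So the $(M-3)$-correction vanishes and each such moment equals its Brownian value. Comparing now with the right-hand side of \eqref{eq.property2}: in the patterns $(3,1)$, $(2,1,1)$, $(1,1,1,1)$ that right-hand side carries a factor $\mathbb{E}\theta_{j_r,l_r}=0$, and so does the (Brownian) left-hand side; in the pattern $(2,2)$ with two distinct multi-indices both sides equal $\mathbb{E}\theta_{j,l}^2\,\mathbb{E}\theta_{j_1,l_1}^2=1$; and the single multi-index pattern (where $\alpha=4$) reduces to $\mathbb{E}\theta_{j,l}^4=M$, already shown. Therefore \eqref{eq.property2} holds for $\{\theta_{j,l}\}$ as well. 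The only obstacle in writing this out fully is bookkeeping: confirming that $\mathbb{E}\eta_{m,k}^3$ really never contributes, and correctly matching, for each exponent pattern, the coefficient of $\mathbb{E}\eta_{m,k}^4$ with the appropriate identity among \eqref{eq3}--\eqref{eq8}; no idea beyond those path-free identities is needed.
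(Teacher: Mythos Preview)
Your proof is correct and uses the same ingredients as the paper's: both arguments rest entirely on the path-free identities \eqref{eq1}--\eqref{eq8} together with the moment assumptions \eqref{eq.property1}--\eqref{eq.property2}. The paper proceeds by directly expanding each moment of $\theta_{j,l}$ via \eqref{coff2} and then invoking the appropriate identity, whereas you package the same computation as ``Brownian value plus $(M-3)$ times a correction coefficient'', which is a tidy reorganisation but not a different route; in fact your presentation is slightly more explicit than the paper's in two places --- you obtain $\mathbb{E}\theta_{j,l}^4=M$ exactly (the paper's displayed chain ends at $<\infty$, though equality follows from \eqref{eq1} and \eqref{eq3}), and you spell out the case analysis for \eqref{eq.property2} that the paper only cites by reference to \eqref{eq4}, \eqref{eq6}, \eqref{eq7}, \eqref{eq8}.
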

\begin{proof}
$\mathbb{E}\theta_{j,l}$ and $\mathbb{E}\theta_{j,l}^2$ can be expanded as follows. 
\[\mathbb{E}\theta_{j,l} =\mathbb{E}\left[ \sum_{m=0}^{j+1}\sum_{k=1}^{N(\pi^{m+1})-N(\pi^m)}A^{m,k}_{j,l}\eta_{m,k}\right]=\sum_{m=0}^{j+1}\sum_{k=1}^{N(\pi^{m+1})-N(\pi^m)}A^{m,k}_{j,l}\mathbb{E}\eta_{m,k} =0 \; \text{ and,} \]
\[\mathbb{E}\theta_{j,l}^2 =\mathbb{E} \left(\sum_{m=0}^{j+1}\sum_{k=1}^{N(\pi^{m+1})-N(\pi^m)}A^{m,k}_{j,l}\eta_{m,k}\right)^2 = \sum_{m=0}^{j+1}\sum_{k=1}^{N(\pi^{m+1})-N(\pi^m)}(A^{m,k}_{j,l})^2= 1. \]
The last identity follows from the Equation \eqref{eq1}. For the covariation the following identity can be obtained.
\[\mathbb{E}\theta_{j,l}\theta_{j',l'} =\mathbb{E} \left[ \left(\sum_{m=0}^{j+1}\sum_{k=1}^{N(\pi^{m+1})-N(\pi^m)}A^{m,k}_{j,l}\eta_{m,k}\right)\left(\sum_{m'=0}^{j'+1}\sum_{k'=1}^{N(\pi^{m'+1})-N(\pi^{m'})}A^{m',k'}_{j',l'}\eta_{m',k'}\right)\right] \]
\[= \sum_{m=0}^{j\wedge j' +1}\sum_{k=1}^{N(\pi^{m+1})-N(\pi^m)}(A^{m,k}_{j,l})(A^{m,k}_{j',l'})= 0. \]
The last equality follows from Equation \ref{eq2}. Now the fourth moment of $\theta_{j,l}$ can be represented as follows.
\[\mathbb{E}\theta_{j,l}^4 =\mathbb{E} \left(\sum_{m=0}^{j+1}\sum_{k=1}^{N(\pi^{m+1})-N(\pi^m)}A^{m,k}_{j,l}\eta_{m,k}\right)^4\]
\[ = \sum_{m=0}^{j+1}\sum_{k=1}^{N(\pi^{m+1})-N(\pi^m)}(A^{m,k}_{j,l})^4\mathbb{E}\eta_{m,k}^4 +\sum_{m,k}\sum_{\substack{m',k'\\(m,k)\neq(m',k')}} \left(A^{m,k}_{j,l}\right)^2\left(A^{m',k'}_{j,l}\right)^2 \mathbb{E}(\eta_{m,k}^2\eta_{m',k'}^2)\]
\[ \leq M\sum_{m=0}^{j+1}\sum_{k=1}^{N(\pi^{m+1})-N(\pi^m)}\left(A^{m,k}_{j,l}\right)^4 +\sum_{m,k}\sum_{\substack{m',k'\\(m,k)\neq(m',k')}} \left(A^{m,k}_{j,l}\right)^2\left(A^{m',k'}_{j,l}\right)^2 \]
\[ = (M-1)\sum_{m=0}^{j+1}\sum_{k=1}^{N(\pi^{m+1})-N(\pi^m)}\left(A^{m,k}_{j,l}\right)^4 +\left(\sum_{m,k}\left(A^{m,k}_{j,l}\right)^2 \right)\left(\sum_{m',k'}\left(A^{m',k'}_{j,l}\right)^2\right) <\infty.\]
The last inequality follows from the fact $\mathbb{E}\eta_{m,k}^4 = M$ and using Equation \eqref{eq1} and \eqref{eq3}. The uncorrelated property of $\theta$ is a consequence of Equation (\ref{eq4}, \ref{eq6}, \ref{eq7}, \ref{eq8}) and the fact that $\mathbb{E}\eta_{m,k}^4=M<\infty$. So the result follows.
\end{proof}

\begin{remark}
The assumptions of the above theorem are sufficient but may not be necessary. Note that, unlike the Brownian motion case, the coefficients in the non-uniform  Schauder basis expansion of typical paths satisfying the assumption of Theorem \ref{mainpart2} only have uncorrelated properties and do not necessarily have IID properties.
\end{remark}
For any finitely refining sequence of partitions $\pi$ of $[0,1]$  we can define the following class of processes:
\ba\mathcal{A}^\pi = \Bigg\{x:\Omega\times [0,1]
\mapsto \mathbb{R}, \quad x(t) = x(0)+ \left(x(1)-x(0) \right)t+ \sum_{m=0}^{\infty} \sum_{k=0}^{N(\pi^{m+1})-N(\pi^m)} \eta_{m,k} e^{\pi}_{m,k}(t) \qquad \quad \nonumber\\ \text{ where }\; \mathbb{E}(\eta_{m,k}) = 0,\; \mathbb{E}(\eta_{m,k}\eta_{m',k'}) = \delta_{m,m'}\delta_{k,k'},\; \mathbb{E}(\eta_{m,k}^4)=M<\infty,\; \text{ and} \qquad \qquad
\\\mathbb{E}(\eta_{m,k}^\alpha\eta_{m_1,k_1}^\beta\eta_{m_2,k_2}^\gamma\eta_{m_3,k_3}^\delta) =\mathbb{E}(\eta_{m,k}^\alpha)\mathbb{E}(\eta_{m_1,k_1}^\beta)\mathbb{E}(\eta_{m_2,k_2}^\gamma)\mathbb{E}(\eta_{m_3,k_3}^\delta) \text{ for all integers }\alpha+\beta+\gamma+\delta=4  \Bigg\}.\nonumber\ea
Then $\mathcal{A}^\pi \subset \mathcal{B}^\pi$ and we have the following result:
\begin{theorem}[Invariance of Quadratic variation]\label{main.theorem}
For any finitely refining sequence of partitions $\pi$, take a process $x\in \mathcal{A}^\pi$. Then for any coarsening $\sigma$ of $\pi$ we have:
\[\forall t\in[0,1],\; [x]_{\sigma^n}(t)\to t \text{ and, } [x]_{\pi^n}(t)\to t \;\text{ in probability} . \]
Furthermore, if both $\pi$ and $\sigma$ are complete refining and balanced then:
\[\mathbb{P}\left(x\in Q_\pi([0,1],\mathbb{R}) \cap Q_\sigma([0,1],\mathbb{R})\right)=1 \; \text{and, } [x]_\pi(t)=[x]_\sigma(t) \;\text{ almost surely.} \]
\end{theorem}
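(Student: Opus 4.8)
The plan is to obtain both statements as consequences of Theorem \ref{mainpart1}, invoked once for the partition sequence $\pi$ and once for its coarsening $\sigma$; the bridge between the two invocations is Theorem \ref{mainpart2}, which transports the defining moment conditions of the class $\mathcal{A}^\pi$ to the Schauder coefficients of $x$ taken along $\sigma$. The first thing to record is that $\sigma$ is itself a finitely refining sequence of partitions of $[0,1]$ with vanishing mesh --- this is part of Definition \ref{def.coarsening} (a coarsening is required to be a finitely refining partition sequence) together with Definition \ref{finite.refining} (finitely refining sequences satisfy $|\sigma^n|\to 0$) --- so that $x$ has a Schauder expansion along $\sigma$ with coefficients $(\theta_{j,l})$ given by the computation preceding Theorem \ref{mainpart2}, namely \eqref{eq.A} and \eqref{coff2}.

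For the convergence in probability: since $x\in\mathcal{A}^\pi\subset\mathcal{B}^\pi$, Theorem \ref{mainpart1} yields $[x]_{\pi^n}(t)\to t$ in probability for each $t\in[0,1]$. Because $\sigma$ is a coarsening of $\pi$, Theorem \ref{mainpart2} applies and shows that $(\theta_{j,l})$ satisfies the centering, orthonormality, bounded fourth-moment and fourth-order factorization properties \eqref{eq.property1}--\eqref{eq.property2}; equivalently, $x\in\mathcal{A}^\sigma\subset\mathcal{B}^\sigma$. Applying Theorem \ref{mainpart1} once more, now with $\sigma$ in place of $\pi$ (legitimate by the previous paragraph), gives $[x]_{\sigma^n}(t)\to t$ in probability for each $t$, which is the first assertion.

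For the almost sure statement, assume in addition that $\pi$ and $\sigma$ are both complete refining and balanced. Then the stronger conclusion of Theorem \ref{mainpart1} gives that almost surely $x\in Q_\pi([0,1],\mathbb{R})$ with $[x]_\pi(t)=t$; applying it again to $\sigma$ (whose coefficients satisfy the required hypotheses by Theorem \ref{mainpart2}, and which is complete refining and balanced by assumption) gives that almost surely $x\in Q_\sigma([0,1],\mathbb{R})$ with $[x]_\sigma(t)=t$. Intersecting the two events of full probability yields $\mathbb{P}\big(x\in Q_\pi([0,1],\mathbb{R})\cap Q_\sigma([0,1],\mathbb{R})\big)=1$ and $[x]_\pi(t)=t=[x]_\sigma(t)$ almost surely.

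The substance of the argument is already contained in Theorem \ref{mainpart2}; the only points requiring attention in the assembly are (i) verifying that a coarsening is a genuine finitely refining sequence with vanishing mesh, so that Theorem \ref{mainpart1} can be re-run verbatim for $\sigma$, and (ii) noting that the fourth-moment control established for $(\theta_{j,l})$ in Theorem \ref{mainpart2} --- which, using \eqref{eq1} and \eqref{eq3}, in fact equals the constant $M$ --- is all that membership in $\mathcal{B}^\sigma$ requires. I do not anticipate a genuine obstacle beyond these bookkeeping checks.
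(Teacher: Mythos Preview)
Your proposal is correct and follows exactly the approach of the paper: transfer the moment conditions on the Schauder coefficients from $\pi$ to $\sigma$ via Theorem \ref{mainpart2}, then invoke Theorem \ref{mainpart1} separately for $\pi$ and for $\sigma$. The paper's proof is in fact the two-line version of what you wrote; your additional bookkeeping in (i) and (ii) is accurate and only makes the argument more explicit.
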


\begin{proof}
Since $x\in \mathcal{A}^\pi$ for a finitely refining sequence of partitions $\pi$ of $[0,1]$, $x\in C^0([0,1],\mathbb{R})$. Now for any coarsening $\sigma$ of $\pi$, Theorem \ref{mainpart2} concludes the corresponding Schauder coefficients $\theta^\sigma_{j,l}$ and $\eta^\pi_{m,k}$ have same uncorrelated properties. So the result follows from Theorem \ref{mainpart1}. 
\end{proof}
\begin{figure}[ht!]
    \centering
    \includegraphics[width=15cm]{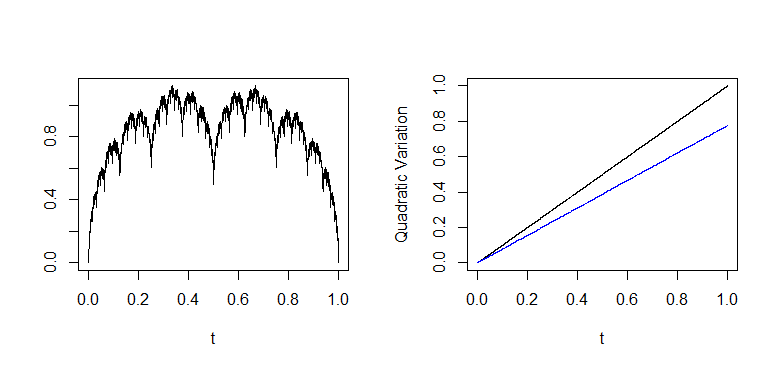}
    \caption{Left: Plot of the function $x$ defined in Example \ref{ex-schied-diff-qv} truncated at $n=12$. Right: The black line represented the quadratic variation of the function $x$ at level n=12 with respect to dyadic partition. The blue line represents the quadratic variation of the function $x$ at level n=12 with respect to the partition $\pi$.}
    \label{fig-1}
    \includegraphics[width=15cm]{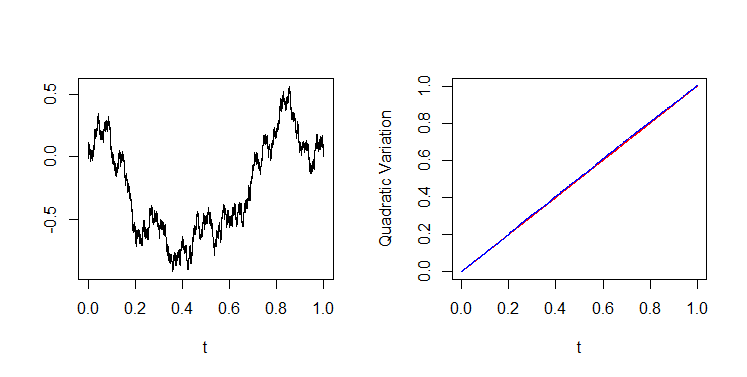}
    \caption{Left: Sample path of $X$ defined in Example \ref{ex-schied-same-qv} truncated at $n=12$. Right: The red line represented the quadratic variation of  $X$at level n=12 along the dyadic partition. The blue line represents the quadratic variation of   $X$ at level n=12 along $\pi$.}
    \label{fig-2}
\end{figure}
The following is an example of a path that does not satisfy the assumptions of Theorem \ref{mainpart2} and whose quadratic variation (unlike Theorem \ref{main.theorem}) is not invariant  under coarsening. 
\begin{example}[Example of continuous function with different quadratic variation along two different balanced finitely refining sequence of partition]\label{ex-schied-diff-qv}
Define
\[x(t)= \sum_{n=0}^{\infty} \sum_{k=0}^{2^m-1} e^{\mathbb{T}}_{m,k}(t),\]
Then the quadratic variation of $x$ along $\mathbb{T}$ is different from the quadratic variation of $x$ along $\pi$, where $\pi^n=\left(0,\frac{1}{2^n},\frac{2}{2^n},\frac{4}{2^n}\cdots\frac{3i+1}{2^n},\frac{3i+2}{2^n},\cdots,1\right)$. Note that the function $x$ belongs to the class of functions defined in \cite{schied2016} and both the partition sequences $\pi$ and $\mathbb{T}$ are finitely refining and $\pi$ is coarsening of $\mathbb{T}$. Also, $x$ has linear quadratic variation along both sequence of partitions $\pi$ and $\mathbb{T}$, but they are not same for all $t\in (0,1]$.
\end{example}

Not surprisingly, Brownian motion belongs to the class $\mathcal{A}^\pi$ for any finitely refining sequence of partitions $\pi$, as for Brownian motion the coefficient of Schauder system expansion follows IID $\mathcal{N}(0,1)$. But the class of paths in $\mathcal{A}^\pi$ is not just a typical path of Brownian motion.
\begin{example}[Process with $\frac{1}{2}$-H\"older continuous paths in the class $\mathcal{A^\mathbb{T}}$ ]\label{ex-schied-same-qv}
Define
\[ X(t)= \sum_{n=0}^{\infty} \sum_{k=0}^{2^m-1} \theta_{m,k}e^{\mathbb{T}}_{m,k}(t),\]
where $\theta_{m,k}$ are IID random variables with
\[
\theta_{m,k} = 
\begin{cases}
  1, &\quad\text{with probability } \frac{1}{2} \\
  -1, &\quad \text{with probability } \frac{1}{2}
\end{cases}. 
\]
From the results of Mishura and Schied \cite{schied2016b} we know that $X\in Q_\mathbb{T}([0,1],\mathbb{R})$ and from the construction $X\in \mathcal{A}^\mathbb{T}$. The process $X$ belongs to the class $\mathcal{X}$ defined in \cite{schied2016b}, which is a class of function with $\frac{1}{2}$-H\"older continuity. So our `rough' class $\mathcal{A^\mathbb{T}}$ contains   $\frac{1}{2}$-H\"older continuous paths.
\end{example}

So $\mathcal{A}^\pi$ is an interesting class of processes and contains a processes  smoother than Brownian motion in the sense of H\"older continuity, but still `rough' enough to have  quadratic variation invariant across different finitely refining partitions.

\begin{example}\label{example5}
Let
$\pi^n= \left(0=t^n_1<\cdots<t^n_{N(\pi^n)} \right)$ where
\[\forall k=1,\cdots, 2^n, \qquad t^{n+1}_{2k} = t^n_k \; \text{ and, }\; t^{n+1}_{2k} = t^n_k+ \frac{t^n_{k+1}-t^n_k}{2.5} \]
and define $x:[0,1] \to \mathbb{R}$ as follows.
\[\forall t\in[0,1], \qquad x(t)= \sum_{n=0}^{\infty} \sum_{k=0}^{2^m-1}e^{\pi}_{m,k}(t). \]
\end{example}
\begin{example}\label{example6}
Define the sequence of partition $\pi=(\pi^n)_{n\geq 1}$ with
$\pi^n= \left(0=t^n_1<\cdots<t^n_{N(\pi^n)} \right)$ as follows.
\[\forall k=1,\cdots, 2^n, \qquad t^{n+1}_{2k} = t^n_k \; \text{ and, }\; t^{n+1}_{2k} = t^n_k+ \frac{t^n_{k+1}-t^n_k}{2.5} \]
Define the process $X:[0,1]\times \Omega \to\mathbb{R}$ as 
\[ X(t)= \sum_{n=0}^{\infty} \sum_{k=0}^{2^m-1} \theta_{m,k}e^{\pi}_{m,k}(t),\]
where $\theta_{m,k}$ are IID random variables with
\[
\theta_{m,k} =
\begin{cases}
  1, &\quad\text{with probability } \frac{1}{2} \\
  -1, &\quad \text{with probability } \frac{1}{2}
\end{cases}. 
\]
\end{example}

\begin{figure}[ht!]
    \centering
    \includegraphics[width=15cm]{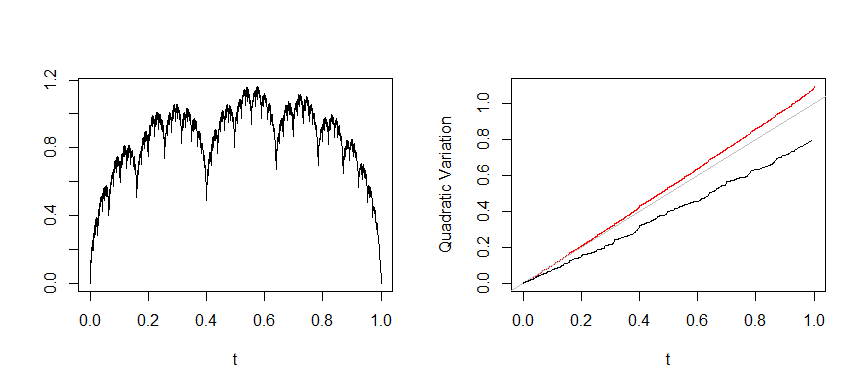}
    \caption{Left: Plot of the function $x$ defined in Example \ref{example5} truncated at level $n=12$. Right: The red line represents the quadratic variation of the function $x$ at level n=12 with respect to partition $\pi$. The black line represents the quadratic variation of the function $x$ at level n=12 for a random partition and the gray line represents for $y=x$ line.}
    \label{fig-schied-notRough-non-uniform}
    \includegraphics[width=15cm]{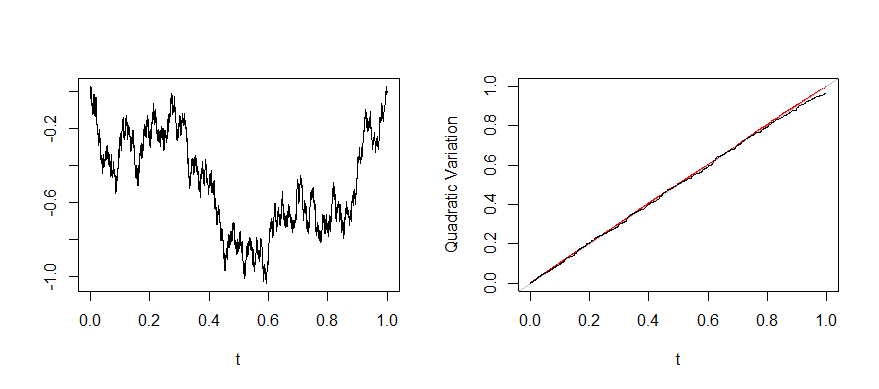}
    \caption{Left: Sample path of $X$ defined in Example \ref{example6} truncated at $n=12$. Right: The red line represented the quadratic variation of   $X$ at level n=12 with respect to partition $\pi$. The black line represents the quadratic variation of  $X$ at level n=12 for a random partition and the gray line represents for $y=x$ line.}
    \label{fig-schied-rough-non-uniform}
\end{figure}

\subsection{Properties and lemmas}

In this subsection, we will discuss some general properties of a process that contains $\mathcal{A}^\pi$, for any finitely refining sequence of partitions $\pi$.
\par For convenience of the next section let us reorder the complete orthonormal basis $\{\psi_{m,k}\}_{m,k}$ as $\{\psi_i\}_{i}$. Since $\{\psi_i\}_i$ is a set of complete orthonormal basis, for all $x\in \mathcal{A}^\pi$ we can express $x$ in the Schauder basis expansion along $\pi$ as follows.
\[x(t) = \sum_{i=0}^{\infty} \eta_{i} \int_0^t \psi^{\pi}_{i}(u)du, \]
where, $\mathbb{E}(\eta_i) = 0,\; \mathbb{E}(\eta_i\eta_{i'}) = \delta_{i,i'},\; \mathbb{E}(\eta_{i}^4)<\infty$. Now define,
\[I_t(s) := \begin{cases} 
   1 & s< t \\
   0 & s\geq t 
  \end{cases} \text{ and } <f,g> := \int_0^1 f(t)g(t)dt \]
Then, \[ \int_0^t \psi^{\pi}_{i}(u)du = <I_t,\psi_i^\pi>.\]
Since $\{\psi_i\}_i$ is a set of complete orthonormal basis we have, 
\begin{equation}\label{eq5}
  I_t = \sum_{i=0}^{\infty}<I_t,\psi_i^\pi>\psi_i^\pi \; \text{ and }\; t= ||I_t||^2 = \sum_{i=0}^{\infty}<I_t,\psi_i^\pi>^2.
\end{equation}

\begin{lemma}
For any finitely refining sequence of partitions $\pi$ take $x\in \mathcal{A}^\pi$. For any two times $t$ and $s\in [0,T]$: $\mathbb{E}[x(t)x(s)] =
t \wedge s,$ where $t \wedge s = \min(t, s)$.
\end{lemma}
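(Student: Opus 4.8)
The plan is to expand $x(t)$ and $x(s)$ in the Schauder basis representation along $\pi$ and exploit the second-moment structure of the coefficients $\eta_{m,k}$. Writing $x(t) = x(0) + (x(1)-x(0))t + \sum_i \eta_i \langle I_t, \psi^\pi_i\rangle$ (after reducing, as in the proofs of Theorem \ref{coeff_hat_func} and Lemma \ref{prop.partition.points1}, to the case $x(0)=x(1)=0$, so that the affine part drops out), I would compute
\[
\mathbb{E}[x(t)x(s)] = \mathbb{E}\left[\left(\sum_i \eta_i \langle I_t,\psi^\pi_i\rangle\right)\left(\sum_j \eta_j \langle I_s,\psi^\pi_j\rangle\right)\right] = \sum_{i,j}\langle I_t,\psi^\pi_i\rangle\langle I_s,\psi^\pi_j\rangle\, \mathbb{E}[\eta_i\eta_j].
\]
Since $\mathbb{E}[\eta_i\eta_j] = \delta_{i,j}$ by the defining property of the class $\mathcal{A}^\pi$, the double sum collapses to $\sum_i \langle I_t,\psi^\pi_i\rangle\langle I_s,\psi^\pi_i\rangle = \langle I_t, I_s\rangle$, using Parseval's identity for the complete orthonormal system $\{\psi^\pi_i\}_i$ (Proposition \ref{Haar.properties} together with completeness, as recorded in \eqref{eq5}). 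Finally $\langle I_t, I_s\rangle = \int_0^1 \mathbbm{1}_{\{u<t\}}\mathbbm{1}_{\{u<s\}}\,du = \int_0^1 \mathbbm{1}_{\{u < t\wedge s\}}\,du = t\wedge s$, which is the claim.

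The one point that requires a word of justification — and the only genuine obstacle — is the interchange of expectation and the (infinite) sums. Here I would argue via an $L^2$ limit: the partial sums $x^N(t) = \sum_{i=0}^{N}\eta_i\langle I_t,\psi^\pi_i\rangle$ converge to $x(t)$ almost surely and in $L^2(\Omega)$, because $\mathbb{E}[(x^N(t) - x^{N'}(t))^2] = \sum_{i=N'+1}^{N}\langle I_t,\psi^\pi_i\rangle^2 \to 0$ as $N,N'\to\infty$ (the tail of the convergent series $\sum_i \langle I_t,\psi^\pi_i\rangle^2 = t$ from \eqref{eq5}), so $(x^N(t))_N$ is Cauchy in $L^2$. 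By Cauchy--Schwarz the bilinear form $(f,g)\mapsto \mathbb{E}[f g]$ is continuous on $L^2(\Omega)$, hence $\mathbb{E}[x(t)x(s)] = \lim_{N\to\infty}\mathbb{E}[x^N(t)x^N(s)]$, and for each finite $N$ the computation above is just a finite sum with no convergence issue. Letting $N\to\infty$ and using $\sum_{i=0}^{N}\langle I_t,\psi^\pi_i\rangle\langle I_s,\psi^\pi_i\rangle \to \langle I_t,I_s\rangle$ completes the argument.

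I would present this as a short proof: first the reduction to $x(0)=x(1)=0$; then the finite-$N$ identity $\mathbb{E}[x^N(t)x^N(s)] = \sum_{i=0}^N \langle I_t,\psi^\pi_i\rangle\langle I_s,\psi^\pi_i\rangle$ using only $\mathbb{E}[\eta_i\eta_j]=\delta_{i,j}$; then the $L^2$-convergence remark to pass to the limit; and finally the elementary evaluation $\langle I_t, I_s\rangle = t\wedge s$. The higher-moment and independence-type hypotheses in the definition of $\mathcal{A}^\pi$ play no role here — only the orthonormality $\mathbb{E}[\eta_i\eta_j]=\delta_{i,j}$ and $\mathbb{E}[\eta_i]=0$ (the latter ensuring the affine term, if retained, contributes nothing to the covariance) are needed, so in fact the statement holds for any $x \in \mathcal{B}^\pi$ as well.
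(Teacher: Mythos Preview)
Your proof is correct and follows essentially the same route as the paper: expand $x(t)$ and $x(s)$ along the orthonormal Haar system $\{\psi^\pi_i\}$, use $\mathbb{E}[\eta_i\eta_j]=\delta_{i,j}$ to collapse the double sum, and apply Parseval to identify $\sum_i\langle I_t,\psi^\pi_i\rangle\langle I_s,\psi^\pi_i\rangle=\langle I_t,I_s\rangle=t\wedge s$. Your version is in fact more careful than the paper's, which silently drops the affine part and does not justify the interchange of expectation with the infinite sums; your $L^2$-Cauchy argument for the partial sums (which the paper proves separately in the theorem immediately following this lemma) and your explicit reduction to $x(0)=x(1)=0$ fill those gaps, and your closing observation that only the second-moment structure is used (so the result extends to $\mathcal{B}^\pi$) is a correct and worthwhile remark.
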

\begin{proof}
Corresponding to $\pi$ we have a complete orthonormal set of basis $\{\psi^\pi_i\}_i$ (as an example non-uniform Haar basis defined in Section \ref{sec2}). So:
\[\mathbb{E}[x(t)x(s)] = \mathbb{E} \left[\left(\sum_{i=0}^{\infty} \eta_{i} \int_0^t \psi^{\pi}_{i}(u)du\right) \left(\sum_{i=0}^{\infty} \eta_{i} \int_0^s \psi^{\pi}_{i}(u)du\right)\right]\]
\[ = \sum_{i=0}^{\infty} \mathbb{E}\eta_{i}^2 \left(\int_0^t \psi^{\pi}_{i}(u)du\right)\left(\int_0^s \psi^{\pi}_{i}(u)du\right)\]
\[ = \sum_{i=0}^{\infty}< I_t, \psi^\pi_i >< I_s, \psi^\pi_i > \;
= \;< I_t, I_s >\; = t \wedge s.\]
\end{proof}
As a consequence of the above for any finitely refining sequence of partition $\pi$ and for any $x\in \mathcal{A}$, we have uncorrelated property of disjoint increments of $x$, i.e. if we have two disjoint interval $[t_1,t_2],[s_1,s_2]\subset [0,T]$ then for all $x\in \mathcal{A}^\pi$, we have $\mathbb{E}\left[(x(t_2)-x(t_1))(x(s_2)-x(s_1))\right] = 0$.

\begin{theorem}
Let $\{\phi_i\}$ be an arbitrary complete orthonormal basis and let $\eta_1,\eta_2,\eta_3\cdots$ be a sequence of random variables defined on a probability space $(\Omega, \mathcal{F}, \mathbb{P})$, with $\mathbb{E}\eta_i = 0, \mathbb{E}\eta_i^2 = 1$ and $\mathbb{E}\eta_i\eta_j = \delta_{ij}$ for $i,j = 1, 2, \cdots$, define
\begin{equation}\label{level.n}
X^n_t =\sum_{i=1}^n \eta_i \int_0^t \phi_i(s)ds. 
\end{equation}

Then for each $t$, $X^n_t$ is a Cauchy sequence in $L^2(\Omega, \mathcal{F}, \mathbb{P})$ whose limit $X_t$ is a random variable with mean zero and variance $t$.
\end{theorem}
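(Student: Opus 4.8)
The plan is to reduce everything to Parseval's identity for the orthonormal basis $\{\phi_i\}$, exactly in the spirit of \eqref{eq5}. Set $c_i := \int_0^t \phi_i(s)\,ds = \langle I_t,\phi_i\rangle$, where $I_t = \mathbbm{1}_{[0,t)}$ as above. Since $\{\phi_i\}$ is a \emph{complete} orthonormal basis of $L^2([0,1])$, Parseval's identity gives $\sum_{i=1}^{\infty} c_i^2 = \|I_t\|^2 = t < \infty$; in particular the tail $\sum_{i>m} c_i^2 \to 0$ as $m\to\infty$.

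First I would establish the Cauchy property. For $n>m$, using only $\mathbb{E}\eta_i = 0$ and $\mathbb{E}\eta_i\eta_j = \delta_{ij}$,
\[
\mathbb{E}\,\bigl| X^n_t - X^m_t\bigr|^2 = \mathbb{E}\Bigl(\sum_{i=m+1}^{n} \eta_i c_i\Bigr)^{2} = \sum_{i=m+1}^{n}\sum_{j=m+1}^{n} c_i c_j\,\mathbb{E}\eta_i\eta_j = \sum_{i=m+1}^{n} c_i^2 ,
\]
which tends to $0$ as $m,n\to\infty$ by the convergence of $\sum_i c_i^2$. Hence $(X^n_t)_{n\geq 1}$ is Cauchy in the complete space $L^2(\Omega,\mathcal{F},\mathbb{P})$ and therefore converges in $L^2$ to some $X_t \in L^2(\Omega,\mathcal{F},\mathbb{P})$.

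Then I would read off the two moments from $L^2$-convergence. Since $L^2$-convergence implies $L^1$-convergence, $\mathbb{E} X_t = \lim_n \mathbb{E} X^n_t = \lim_n \sum_{i=1}^n c_i\,\mathbb{E}\eta_i = 0$. For the variance, the same orthonormality computation yields $\mathbb{E}|X^n_t|^2 = \sum_{i=1}^n c_i^2$, and since convergence in $L^2$ forces $\|X^n_t\|_{L^2}\to\|X_t\|_{L^2}$, we obtain $\mathbb{E}|X_t|^2 = \sum_{i=1}^\infty c_i^2 = t$. Combined with $\mathbb{E} X_t = 0$, this shows $X_t$ has mean zero and variance $t$.

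I do not expect a genuine obstacle here: the argument is elementary second-moment bookkeeping, and the only ingredient beyond that is the completeness of $\{\phi_i\}$, invoked through Parseval's identity precisely as in \eqref{eq5} to identify $\sum_i c_i^2$ with $t$ (completeness is what pins down the value $t$; Bessel's inequality alone would give $\sum_i c_i^2 \leq t$, which already suffices for the Cauchy property but not for the exact variance). If desired, one may additionally remark that the limiting family $(X_t)_{t}$ inherits uncorrelated increments from the $\eta_i$, as noted after the preceding lemma, but this is not needed for the statement.
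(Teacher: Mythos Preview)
Your proof is correct and follows essentially the same route as the paper: both reduce the Cauchy property to the tail of $\sum_i \langle I_t,\phi_i\rangle^2$ via the orthonormality of the $\eta_i$, and identify the limiting variance with $t$ through Parseval's identity for $I_t$, exactly as in \eqref{eq5}. If anything, your justification of the passage to the limit in the mean and variance (via $L^2\Rightarrow L^1$ convergence and continuity of the $L^2$-norm) is more explicit than the paper's.
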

 For any finitely refining sequence of partition $\pi$ the assumption of the above theorem is satisfied for all $x\in \mathcal{A}^\pi$.
\begin{proof}
Since $\{\phi_i\}_i$ is a complete orthonormal basis we have
\[ I_t = \sum_{i=0}^{\infty}<I_t,\phi_i>\phi_i \; \text{ and }\; t=\vert t \vert^2 = \sum_{i=0}^{\infty}<I_t,\phi_i>^2.\]
So we can have the following expression for $\mathbb{E}\left(X_t^n-X_t^m\right)^2$ where $n>m$ as follows.
\[\mathbb{E}\left(X_t^n-X_t^m\right)^2 = \mathbb{E} \left(\sum_{i=m+1}^{n} \eta_i \int_0^t\phi_i(s)ds \right)^2 =\sum_{i=m+1}^{n} \mathbb{E} \eta_i^2\left( \int_0^t\phi_i(s)ds \right)^2 \]
\[ = \sum_{i=m+1}^n <I_t,\phi_i>^2 \xrightarrow[]{m,n\to \infty} 0.\]
Thus $X_t^n$ is a Cauchy sequence in $L^2(\Omega, \mathcal{F}, \mathbb{P})$. The mean and the variance of the limiting random variable $X_t$ can be represented as:
\[\mathbb{E}X_t = \lim_{n\to \infty} \mathbb{E}X_t^n = \lim_{n\to \infty}\mathbb{E} \left(\sum_{i=0}^{n} \eta_i \int_0^t\phi_i(s)ds \right)= \lim_{n\to \infty} \sum_{i=0}^{n} \mathbb{E}\left(\eta_i\right) \int_0^t\phi_i(s)ds =0 \quad \text{ and,}\]
\[ Var( X_t) = \lim_{n\to \infty} Var (X_t^n) =\lim_{n\to \infty}\left[\mathbb{E} \left(\sum_{i=0}^{n} \eta_i \int_0^t\phi_i(s)ds \right)^2 - \left(\mathbb{E} \sum_{i=0}^{n} \eta_i \int_0^t\phi_i(s)ds \right)^2\right]\]
\[=\lim_{n\to \infty}\sum_{i=0}^{n}\mathbb{E} \left( \eta_i\right)^2 \left(\int_0^t\phi_i(s)ds \right)^2 = \lim_{n\to \infty}\sum_{i=0}^{n} <I_t,\phi_i>^2 = t.\]
So the lemma follows.
\end{proof}
The above result is valid for any orthonormal basis (non just for non-uniform Haar basis). For the following continuity result, let us assume preciously non-uniform Haar basis. So Equation \eqref{level.n} is as follows.
\begin{equation} \label{level.hat.n}
 X^n(t)= \sum_{m=0}^{n-1} \sum_{k=0}^{N(\pi^{m+1})-N(\pi^{m})-1} \theta_{m,k}e^\pi_{m,k}(t),\quad \text{ and, } 
\end{equation}
\[X(t)= \lim_{n\to \infty}x^n(t) =\sum_{m=0}^{\infty} \sum_{k=0}^{N(\pi^{m+1})-N(\pi^{m})-1} \theta_{m,k}e^\pi_{m,k}(t) \]
\begin{theorem}[Continuity of path]\label{hat.continuity}
Take a balanced, finitely refining sequence of partitions $\pi$ of $[0,1]$. Then under the assumption $\mathbb{E}(\theta_{m,k}^4)<M$, for all $m,k$ the sequence $X^n(t)$ defined in Equation \eqref{level.hat.n}, converges uniformly in $t$, almost surely to $X(t)$.
Thus the process $X(t) = \lim_{n\to \infty} X^n(t)$ is a stochastic process with continuous sample paths.
\end{theorem}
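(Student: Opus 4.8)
The plan is to run a L\'evy-type argument: decompose the Schauder series level by level, bound the sup-norm of the contribution of each level, and use Borel--Cantelli to show that the tail of the series converges uniformly, almost surely. Note first that each partial sum $X^n$ is a finite linear combination of the functions $e^\pi_{m,k}$, which are continuous (piecewise linear), so $X^n\in C^0([0,1],\mathbb{R})$; since a uniform almost-sure limit of continuous functions is continuous, it suffices to prove that $X^n\to X$ uniformly on $[0,1]$ almost surely.

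To that end, fix a level $m$ and set $S_m:=\sup_{t\in[0,1]}\big|\sum_k \theta_{m,k}e^\pi_{m,k}(t)\big|$, the sum being over the reordered level-$m$ indices $k=0,\dots,N(\pi^{m+1})-N(\pi^m)-1$. By Proposition \ref{Haar.properties}(i)--(ii), functions $e^\pi_{m,k}$ associated with distinct level-$m$ intervals of $\pi^m$ have disjoint interiors, and inside a single such interval at most $M$ of the nested ``tents'' are simultaneously nonzero (because $\pi$ is finitely refining, $p(m,k+1)-p(m,k)\le M$); moreover \eqref{defn.e} gives the closed form $\|e^\pi_{m,k}\|_\infty=\big((t^{m,k}_2-t^{m,k}_1)(t^{m,k}_3-t^{m,k}_2)/(t^{m,k}_3-t^{m,k}_1)\big)^{1/2}\le \sqrt{|\pi^m|}$, the support $[t^{m,k}_1,t^{m,k}_3]$ lying inside one interval of $\pi^m$. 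Evaluating at an arbitrary $t$, which lies in exactly one such interval, yields $S_m\le M\big(\max_k|\theta_{m,k}|\big)\sqrt{|\pi^m|}$. Balancedness then gives $|\pi^m|\le c\,\underline{\pi^m}\le c/N(\pi^m)$, and finite refining gives $N(\pi^{m+1})-N(\pi^m)\le (M+1)N(\pi^m)$ for the number of level-$m$ coefficients.

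Next I would control the maximum of the coefficients. The assumption $\mathbb{E}\theta_{m,k}^4\le M$ and a union bound give $\mathbb{P}\big(\max_k|\theta_{m,k}|>\lambda\big)\le (M+1)M\,N(\pi^m)/\lambda^4$ for every $\lambda>0$. Choosing a threshold $\lambda_m$ of order $N(\pi^m)^{1/4}m$ makes $\sum_m\mathbb{P}(\max_k|\theta_{m,k}|>\lambda_m)$ a convergent series (it is $\lesssim\sum_m m^{-4}$), so by Borel--Cantelli there is an almost surely finite $n_0(\omega)$ with $\max_k|\theta_{m,k}|\le\lambda_m$ for all $m\ge n_0$. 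Hence for $m\ge n_0$, $S_m\le M\lambda_m\sqrt{|\pi^m|}\lesssim m\,N(\pi^m)^{-1/4}$, and since $\sup_t|X(t)-X^n(t)|\le\sum_{m\ge n}S_m$, uniform a.s. convergence will follow once $\sum_m m\,N(\pi^m)^{-1/4}<\infty$. This is where balancedness enters decisively: for a balanced finitely refining sequence the mesh $|\pi^m|$ decreases, hence $N(\pi^m)$ grows, fast enough to render this series summable; concretely, whenever the mesh strictly decreases one has $|\pi^{m+1}|\le\frac{c}{2}|\pi^m|$, because the longest interval of $\pi^m$ must be split into pieces of length at least $\underline{\pi^{m+1}}\ge|\pi^{m+1}|/c$. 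Putting the pieces together gives $X^n\to X$ uniformly on $[0,1]$ almost surely, and therefore $X$ has continuous sample paths.

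The main obstacle is this last step: with only a fourth-moment bound on the $\theta_{m,k}$ (and no independence or sign symmetry available), the best general control on $\max_k|\theta_{m,k}|$ over the $\asymp N(\pi^m)$ level-$m$ coefficients is of order $N(\pi^m)^{1/4}$, so the argument lives or dies on the quantitative interplay between this growth and the decay $\sqrt{|\pi^m|}\asymp N(\pi^m)^{-1/2}$; making the Borel--Cantelli series and the sup-norm tail series simultaneously summable is the delicate part, and it is exactly here that the balanced hypothesis---through the forced growth of $N(\pi^m)$ relative to the mesh---is used in an essential way.
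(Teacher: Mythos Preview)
Your argument is essentially the paper's: bound the level-$m$ increment $\sup_t|X^{m+1}-X^m|$ by $C\sqrt{|\pi^m|}\max_k|\theta_{m,k}|$, control the maximum with a fourth-moment Markov inequality plus a union bound over the $O(N(\pi^m))$ coefficients, and invoke Borel--Cantelli. The paper's threshold is $c_n=|\pi^n|^{\epsilon-1/2}$ (with $8\epsilon<1$, so that eventually $H_n\le C|\pi^n|^{\epsilon}$) rather than your $\lambda_m=m\,N(\pi^m)^{1/4}$, but under balancedness these choices are interchangeable, and the final summability step---which you correctly single out as the crux---is handled in the paper by the one-line assertion $|\pi^n|^{1-4\epsilon}\le M_2\,M^{n(4\epsilon-1)}$, i.e.\ exactly the geometric decay of the mesh that your $\tfrac{c}{2}$-contraction heuristic is aiming at.
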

\begin{proof}
Let define $y^n(t) = X^{n+1}(t)-X^n(t) = \sum_{k=0}^{N(\pi^{n+1})-N(\pi^{n})-1} \theta_{n,k}e^\pi_{n,k}(t)$, then if we can show that the function $y^n$ is continuous and converges to $0$   uniformly so the result follows. Now since $e_{n,k}$ is a continuous function over $t$ for all $n,k$, so for every $n\in \mathbb{N}\cap \{0\}$: $y^n$ is a continuous function over $t$. Since $\pi$ is finitely refining for any fixed $n$ there exists $M< \infty$ (independent of $n$) such that at max $M$ many of ${e_{n,k}}$ are nonzero for any time $t\in [0,1]$. Now define: 
\[H_n=\sup_{t\in [0,1]}|y^n(t)|= \sup_{t\in [0,1]}|X^{n+1}(t)-X^n(t)| \]\[=\sup_{t\in [0,1]}\left| \sum_{k=0}^{N(\pi^{n+1})-N(\pi^{n})-1} \theta_{n,k}e^\pi_{n,k}(t) \right|\leq C_1|\pi^n|^{\frac{1}{2}}\times \sup_{k}|\theta_{n,k}|. \]
For the last inequality we use the fact that for a balanced sequence of partitions $\pi$, $\sup_{t\in [0,1]}|e_{n,k}(t)| \leq C|\pi^n|^{\frac{1}{2}}$.
Thus for any constant $c_n$,
\[\mathbb{P}\left(H_n > C_1|\pi^n|^{\frac{1}{2}} c_n \right) \leq \mathbb{P}\left(\sup_{k}|\theta_{n,k}| > c_n \right)= \mathbb{P}\left(\cup_k\left\{ |\theta_{n,k}|> c_n \right\}\right)\]
\begin{equation}\label{inequality.cont}
  \leq\sum_{k}\mathbb{P}\left( |\theta_{n,k}|> c_n \right)\leq C_2N(\pi^n)\times\frac{\mathbb{E}|\theta_{n,k}|^4}{c_n^4} \leq C_0\times\frac{N(\pi^n)}{c_n^4}\leq C\frac{M^n}{c_n^4},
\end{equation}
where, $C,M$ are finite constants independent of $n$. The last inequality is a consequence of Markov inequality. We now choose $c_n = |\pi^n|^{\epsilon-\frac{1}{2}}$
 for some $\epsilon>0$ with $8\epsilon<1$. Then the right-hand side of Inequality \eqref{inequality.cont} is $ C_0\frac{N(\pi^n)}{c_n^4} = C_0\frac{N(\pi^n)}{|\pi^n|^{4\epsilon-2}}\leq M_1|\pi^n|^{1- 4\epsilon} \leq M_2 M^{n(4\epsilon-1)}$ (The two inequality follows as $\pi$ is balanced). Now we know that $M^{n(4\epsilon-1)}$ is a general team of in a convergent series. Also $b_n$ defined as $b_n= C_1|\pi^n|^{\frac{1}{2}} c_n = C_1|\pi^n|^{\frac{1}{2}} |\pi^n|^{\epsilon-\frac{1}{2}} = C_1 |\pi^n|^\epsilon \to 0 $ as $n\to \infty$. 
 So using Borel-Cantelli Lemma, Inequality \eqref{inequality.cont} deduces to, 
\[\mathbb{P}[H_n > b_n \text{ infinitely often }] = 0\]
Since $b_n \to 0$, this shows that $H_n$ is a convergent series and completes the proof.
\end{proof}
In Theorem \ref{hat.continuity}, the assumption of the reference partition $\pi$ to be balanced is sufficient but \textit{not} necessary. 

\section{Extension to the multidimensional case}\label{sec6}
In this section, we extend the previous results discussed to a  multidimensional setting. 
\paragraph{Non-uniform multidimensional Haar basis.} Fix a finitely refining sequence of partitions $\pi$ of $[0,1]$. The one dimensional non-uniform Haar basis can be represented as $\{h_{m,k,j}\}$, where $m=0,1,\cdots$ and $k=0,\cdots, N(\pi^n)$ and there exists $M<\infty$ such that $j<M$.
Then the function $h_{m,k,j}:[0,1]\to \mathbb{R}$ for all $m,k$ and $j$ can be expressed as: 

\begin{equation}\label{haar_basis_dyadic}
h_{m,k,j}(t)= 
\begin{cases}
    \quad 0 &\quad\text{if } t\notin \left[t^{m+1}_{p(m,k)},t_{p(m,k)+j}^{m+1}\right) \\
   
    \left( \frac{t^{m+1}_{p(m,k)+j}-t^{m+1}_{p(m,k)+j-1}}{t^{m+1}_{p(m,k)+j-1} - t^{m+1}_{p(m,k)}}\times\frac{1}{t^{m+1}_{p(m,k)+j}-t^{m+1}_{p(m,k)}} \right)^{\frac{1}{2}} &\quad\text{if } t\in\left[t_{p(m,k)}^{m+1},t_{p(m,k)+j}^{m+1}\right) \\
   
    -\left( \frac{t^{m+1}_{p(m,k)+j-1} - t^{m+1}_{p(m,k)}}{t^{m+1}_{p(m,k)+j}-t^{m+1}_{p(m,k)+j-1}}\times\frac{1}{t^{m+1}_{p(m,k)+j}-t^{m+1}_{p(m,k)}}\right)^{\frac{1}{2}} &\quad\text{if } t\in \left[t_{p(m,k)+j-1}^{m+1},t_{p(m,k)+j}^{m+1}\right),
   \end{cases} 
\end{equation}
where, $p(m,k)$ is defined in Equation \eqref{EqFor_p}. The non-uniform Haar basis $\{h_{m,k,j}\}$ is an orthogonal basis in one dimension. For convenience, reorder the non-uniform Haar basis to $\{h_{m,k}\}$, where $m=0,1,\cdots$ and $k= 0,1,\cdots, N(\pi^{m+1})-N(\pi^m)-1$.
Now we will define $d$-dimensional non-uniform Haar basis in the canonical way. Define $\{h_{m,k}^i\}$ for all $m=0,1,\cdots$, $k= 0,1,\cdots, N(\pi^{m+1})-N(\pi^m)-1$ and $i=1,2,\cdots, d$ as following. 
\begin{equation}
h_{m,k}^i(t):[0,1]\to \mathbb{R}^d \quad\text{ such that, } \quad h_{m,k}^i(t) = h_{m,k}(t) \times \textbf{e}_i,  
\end{equation}
where, $\textbf{e}_i$ is a $d$-dimensional column vector with $1$ at $i^{th}$ entry and $0$ elsewhere. Clearly, $\{\textbf{e}_i\}_{i=1,\cdots,d}$ is an orthogonal basis of $\mathbb{R}^d$. Denote $\textbf{0}$ to be a $d$-dimensional column vector with all entry as $0$. Now from the definition of $h_{m,k}^i$ we get
\[\int_0^1 h_{m,k}^i =\textbf{0} ; \quad \int_0^1 <h_{m,k}^i,h_{m,k}^i> = \textbf{e}_i \quad \text{and,} \int_0^1 <h_{m,k}^i,h_{m',k'}^j> =\mathbbm{1}_{i=j}\mathbbm{1}_{m=m'}\mathbbm{1}_{k=k'}\textbf{e}_i.\]
So $\{h_{m,k}^i\}$, where $m=0,1,\cdots$, $k=0,1,\cdots,N(\pi^{m+1})-N(\pi^m)-1$ and $i=1,\cdots ,d$ form an orthonormal basis in $\mathbb{R}^d$.
The  Schauder basis $e^{\pi,i}_{m,k}:[0,1]\to \mathbb{R}^d$ is defined as $e^i_{m,k}(t) =\left(\int_0^t h_{m,k}(u)du\right)\textbf{e}_i$ for $m\in \mathbb{N},$ $k=0,1,\cdots,N(\pi^{m+1}-N(\pi^m)-1)$ and $i=1,\cdots, d$.
\par The following theorem shows that any $d$-dimensional continuous function can be represented uniquely wrt the $d$-dimensional non-uniform  Schauder system associated with a finitely refining partition sequence.
\begin{theorem}
Let $\pi$ be a finitely refining sequence of partitions of $[0,1]$. Then any continuous function $x= \left(x_1,x_2,\cdots,x_d\right)\in C^0([0,1],\mathbb{R}^d)$ has a  {unique}  Schauder representation associated with $\pi$:
\[   \; x_i(t)= x_i(0)+ (x_i(1)-x_i(0))t +\sum_{m=0}^{\infty} \sum_{k=0}^{N(\pi^{m+1})-N(\pi^{m})-1} \theta_{m,k}^{\tcircle{i}} e^{\pi}_{m,k}(t).\]
If the support of the function $e_{m,k}^\pi$ is $[t^{m,k}_1,t^{m,k}_3]$ and its maximum is attained at time $t^{m,k}_2$ then 
\[\forall i=1,\cdots,d \qquad\theta_{m,k}^{\tcircle{i}} = \frac{\bigg[(x_i(t^{m,k}_{2})-x_i(t^{m,k}_{1}))(t^{m,k}_{3}-t^{m,k}_{2})-(x_i(t^{m,k}_{3})- x_i(t^{m,k}_{2}))(t^{m,k}_{2}-t^{m,k}_1) \bigg]}{\sqrt{(t^{m,k}_2-t^{m,k}_1)(t^{m,k}_3-t^{m,k}_2)(t^{m,k}_3-t^{m,k}_1)}}. \]
\end{theorem}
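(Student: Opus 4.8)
The plan is to obtain the statement by applying the one-dimensional representation theorem, Theorem \ref{coeff_hat_func}, to each coordinate. Since $x=(x_1,\dots,x_d)\in C^0([0,1],\mathbb{R}^d)$ is equivalent to $x_i\in C^0([0,1],\mathbb{R})$ for every $i=1,\dots,d$, I would first fix $i$ and invoke Theorem \ref{coeff_hat_func} for $x_i$. This produces scalars $\theta^{\tcircle{i}}_{m,k}$ for which
\[ x_i(t)= x_i(0)+(x_i(1)-x_i(0))t+\sum_{m=0}^{\infty}\sum_{k=0}^{N(\pi^{m+1})-N(\pi^m)-1}\theta^{\tcircle{i}}_{m,k}\,e^{\pi}_{m,k}(t), \]
the partial sums converging uniformly on $[0,1]$ (because $|\pi^n|\to 0$, as recalled after Lemma \ref{prop.partition.points1}), together with the closed form \eqref{eq.theta.coeff} for $\theta^{\tcircle{i}}_{m,k}$ with $x$ replaced by the scalar function $x_i$, which is exactly the formula in the statement. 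Stacking the $d$ scalar identities into one $\mathbb{R}^d$-valued identity yields the asserted Schauder representation; equivalently, writing $e^{\pi,i}_{m,k}=e^{\pi}_{m,k}\,\textbf{e}_i$, it is the expansion of $x$ in the $d$-dimensional non-uniform Schauder system, with $\theta^{\tcircle{i}}_{m,k}$ the coefficient of $e^{\pi,i}_{m,k}$.

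For uniqueness I would repeat the argument behind Theorem \ref{coeff_hat_func} componentwise. Suppose $x_i$ has two such representations with coefficient families $(\theta^{\tcircle{i}}_{m,k})$ and $(\tilde\theta^{\tcircle{i}}_{m,k})$. By the multidimensional analogue of Lemma \ref{prop.partition.points1}, evaluating both expansions at the points of $\pi^{N}$ kills every term with $m\ge N$, so an induction on $N$ (equivalently on the level $m$) forces $\theta^{\tcircle{i}}_{m,k}=\tilde\theta^{\tcircle{i}}_{m,k}$ for all $m,k$; alternatively one may appeal directly to the orthonormality of the vector-valued Haar family $\{h^i_{m,k}\}$ established just above, which identifies each $\theta^{\tcircle{i}}_{m,k}$ as a fixed linear functional of the increments of $x$. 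Either way the coefficients are determined by $x$. The closed-form expression then drops out of \eqref{eq.theta.coeff}: since $t^{m,k}_1,t^{m,k}_2,t^{m,k}_3\in\pi^{m+1}$, the weighted second difference of $x_i$ at these points telescopes against the Haar functions exactly as in the proof of Theorem \ref{coeff_hat_func}, leaving only the $\theta^{\tcircle{i}}_{m,k}$ term.

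I do not expect a real obstacle: the theorem is a coordinatewise restatement of Theorem \ref{coeff_hat_func}, and the only step needing care is the purely bookkeeping verification that the $d$ scalar expansions genuinely reassemble into the $\mathbb{R}^d$-valued expansion in the basis $\{e^{\pi,i}_{m,k}\}$. This is immediate from the product structure $h^i_{m,k}=h_{m,k}\,\textbf{e}_i$ together with $\{\textbf{e}_i\}_{i=1,\dots,d}$ being an orthonormal basis of $\mathbb{R}^d$, which makes the $d$ coordinates decouple completely.
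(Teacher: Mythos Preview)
Your proposal is correct and follows exactly the approach indicated in the paper: the paper's proof simply states that it is a straightforward extension of the one-dimensional case in Theorem \ref{coeff_hat_func}, which is precisely what you carry out by applying that theorem coordinatewise and reassembling the scalar expansions into the $\mathbb{R}^d$-valued one.
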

\begin{proof}
The proof is a straightforward extension of the one-dimensional case in Theorem \ref{coeff_hat_func}.
\end{proof}
We now give a multi-dimensional version of  Theorem \ref{QV-weighted-schied}.

\begin{theorem}\label{QV-weighted-schied_higherDim}
Let $\pi$ be a finitely refining sequence of partitions  of $[0,1]$ with vanishing mesh and 
\[   x_i(t)= x_i(0)+ (x_i(1)-x_i(0))t +\sum_{m=0}^{\infty} \sum_{k=0}^{N(\pi^{m+1})-N(\pi^{m})-1} \theta_{m,k}^{\tcircle{i}} e^{\pi}_{m,k}(t).\]
Then 
\ba 
[x_i,x_j]_{\pi^n}(t) = \sum_{m=0}^{n-1} \sum_{k=0}^{N(\pi^{m+1})-N(\pi^m)-1} a_{m,k}^n(t)\theta_{m,k}^{\tcircle{i}}\theta_{m,k}^{\tcircle{j}}  +\sum_{m,m'} \sum_{\substack{k,k'\\(m,k)\neq (m',k')}}b^n_{m,k,m',k'}(t)\theta_{m,k}^{\tcircle{i}}\theta_{m',k'}^{\tcircle{j}} . \qquad\ea 
If $[t^{m,k}_1,t^{m,k}_3]$ is the support of the function $e_{m,k}^\pi$, $t^{m,k}_2$ is maximum and
$\Delta t^n_i = t^n_{i+1}\wedge t- t^n_{i}\wedge t,$ then: 
\[ a_{m,k}^n(t) =\left\{ \left[ \sum_{\Delta t^n_i \subset [t^{m,k}_{1}, t^{m,k}_{2}]} (\Delta t^n_i)^2\right]\times \frac{t^{m,k}_{3}-t^{m,k}_{2}}{t^{m,k}_{2}-t^{m,k}_{1}} + \left[ \sum_{\Delta t^n_i \subset [t^{m,k}_{2}, t^{m,k}_{3}]} (\Delta t^n_i)^2\right]\times \frac{t^{m,k}_{2}-t^{m,k}_{1}}{t^{m,k}_{3}-t^{m,k}_{2}}\right\} \times \frac{1}{t^{m,k}_{3}-t^{m,k}_{1}}, \]
and,
\[b^n_{m,k,m',k'}(t) = \psi_{m',k'}(t^{m,k}_1)\times \left\{ \frac{ \sum_{\Delta t^n_i \subset [t^{m,k}_{1}, t^{m,k}_{2}]} (\Delta t^n_i)^2}{t^{m,k}_{2}-t^{m,k}_{1}} - \frac{\sum_{\Delta t^n_i \subset [t^{m,k}_{2}, t^{m,k}_{3}]} (\Delta t^n_i)^2}{t^{m,k}_{3}-t^{m,k}_{2}}\right\} \times \sqrt{\frac{(t^{m,k}_{2}-t^{m,k}_{1})(t^{m,k}_{3}-t^{m,k}_{2})}{t^{m,k}_{3}-t^{m,k}_{1}}}, \]
if ${\rm Supp}(e^n_{m,k})\subset {\rm Supp}(e^n_{m',k'})$, and $b^n_{m,k,m',k'}(t)=0$ if ${\rm Supp}(e^n_{m,k})\cap {\rm Supp}(e^n_{m',k'})=\emptyset$.
\end{theorem}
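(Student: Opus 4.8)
The plan is to treat the statement as the bivariate version of Proposition~\ref{QV-weighted-schied}. By Proposition~\ref{prop.cont.qv} and Definition~\ref{defn.qv.vector}, the quantity $[x_i,x_j]_{\pi^n}(t)$ is the $(i,j)$ entry of the matrix $[x]_{\pi^n}(t)$, i.e.\ $\sum_l\big(x_i(t^n_{l+1}\wedge t)-x_i(t^n_l\wedge t)\big)\big(x_j(t^n_{l+1}\wedge t)-x_j(t^n_l\wedge t)\big)$; this is a purely algebraic level-$n$ quantity, bilinear and symmetric in $(x_i,x_j)$, so no limit is involved, exactly as in Proposition~\ref{QV-weighted-schied}. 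I would therefore simply rerun the proof of Proposition~\ref{QV-weighted-schied} with the square replaced by this product. From the $d$-dimensional Schauder representation established just above (the analogue of Theorem~\ref{coeff_hat_func}), each component has the expansion $x_i=x_i(0)+(x_i(1)-x_i(0))t+\sum_{m,k}\theta^{(i)}_{m,k}e^\pi_{m,k}$, and, after the same reduction to $x_i(0)=x_i(1)=0$ used there, the increments satisfy $x_i(t^n_{l+1})-x_i(t^n_l)=\sum_{m\le n-1,\,k}\theta^{(i)}_{m,k}\,\psi_{m,k}(t^n_l)\,(t^n_{l+1}-t^n_l)$, because $\psi_{m,k}$ is constant on each interval of $\pi^n$ when $m\le n-1$.

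Multiplying the increments of $x_i$ and of $x_j$ and summing over $l$, and using that the finitely refining property bounds by $M$ the number of Haar functions nonzero at a given partition point (so every sum is finite), one collects the coefficient of $\theta^{(i)}_{m,k}\theta^{(j)}_{m',k'}$. For $(m,k)=(m',k')$ it is $\sum_l\psi_{m,k}(t^n_l)^2(\Delta t^n_l)^2=a^n_{m,k}(t)$; for $(m,k)\neq(m',k')$ it is $\sum_l\psi_{m,k}(t^n_l)\psi_{m',k'}(t^n_l)(\Delta t^n_l)^2$, which vanishes when the Schauder supports are disjoint and, when ${\rm supp}(e^n_{m,k})\subset{\rm supp}(e^n_{m',k'})$, collapses — using that $\psi_{m',k'}$ is constant on the smaller support, by orthonormality of the Haar system — to $\psi_{m',k'}(t^{m,k}_1)\sum_{t^n_l\in{\rm supp}(e^n_{m,k})}\psi_{m,k}(t^n_l)(\Delta t^n_l)^2=b^n_{m,k,m',k'}(t)$. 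The weights $a^n_{m,k}(t)$ and $b^n_{m,k,m',k'}(t)$ are literally the ones of Proposition~\ref{QV-weighted-schied}: as its accompanying remark records, they depend only on $\pi$ and not on the path, so their displayed closed forms transfer verbatim. The case $t<1$ is handled, as there, by running the argument for the stopped path $x(t\wedge\cdot)$. As a cross-check one may instead polarise, $[x_i,x_j]_{\pi^n}=\tfrac12\big([x_i+x_j]_{\pi^n}-[x_i]_{\pi^n}-[x_j]_{\pi^n}\big)$, apply Proposition~\ref{QV-weighted-schied} three times — the Schauder coefficients of $x_i+x_j$ being $\theta^{(i)}_{m,k}+\theta^{(j)}_{m,k}$ by linearity of \eqref{eq.theta.coeff} — and use $\tfrac12[(a+b)^2-a^2-b^2]=ab$.

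I do not expect a substantive obstacle: the statement is a line-by-line lift of Proposition~\ref{QV-weighted-schied}, and the $d$-dimensional Haar/Schauder machinery set up earlier in this section supplies exactly what is needed. The one point that needs care is that the cross term is now genuinely bilinear rather than a square, so $\theta^{(i)}_{m,k}\theta^{(j)}_{m',k'}$ and $\theta^{(i)}_{m',k'}\theta^{(j)}_{m,k}$ are no longer the same expression; I would keep the bookkeeping in step with the definition of $b^n_{m,k,m',k'}$, which is declared to vanish unless ${\rm supp}(e^n_{m,k})\subset{\rm supp}(e^n_{m',k'})$, by always placing in the first index slot the pair whose Schauder function has the smaller support. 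With that convention the reorganisation of the double sum goes through as in one dimension, and the displayed formulas for $[x_i,x_j]_{\pi^n}(t)$, $a^n_{m,k}(t)$, and $b^n_{m,k,m',k'}(t)$ follow.
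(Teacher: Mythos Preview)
Your proposal is correct and matches the paper's approach: the paper does not spell out a separate proof for this theorem, treating it as the straightforward componentwise analogue of Proposition~\ref{QV-weighted-schied} (and of Theorem~\ref{Quadratic.covar}, whose proof is already recorded as ``similar to that of Theorem~\ref{QV-weighted-schied}''). Your rerunning of that argument with the product in place of the square, together with the polarisation cross-check, is exactly the intended route.
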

The following example is a 2-dimensional extension of the construction given  in Section \ref{sec5}.

\begin{example}\label{higher.dimn.Schid}[Example of process in 2 dimension with linear quadratic variation]
Define the class of processes $x\in C^0([0,T],\mathbb{R}^2)$ as following. For all $t\in [0,T]$: 
\begin{footnotesize}
\[ x(t) = \left( x_1(0)+ \left(x_1(1)-x_1(0) \right)t+ \sum_{m=0}^{\infty} \sum_{k=0}^{2^m-1} \theta_{m,k}^{\tcircle{1}} e_{m,k}(t),\qquad x_2(0)+ \left(x_2(1)-x_2(0) \right)t+ \sum_{m=0}^{\infty} \sum_{k=0}^{2^m-1} \theta_{m,k}^{\tcircle{2}} e_{m,k}(t) \right),\]
\end{footnotesize}
where, $\theta_{m,k}^{\tcircle{1}},\theta_{m,k}^{\tcircle{2}} \in \{-1,1\}$. This is the two dimensional extension of \cite{schied2016}. Then the quadratic variation of $x$ can be think of a $2\times 2$ matrix:
\[[x]_{T^n}(t)=\begin{pmatrix}
\frac{1}{2^n}\sum_{m=0}^{n-1} \sum_{k=0}^{2^m-1} (\theta_{m,k}^{\tcircle{1}})^2\mathbbm{1}_{[0,t]} & \frac{1}{2^n}\sum_{m=0}^{n-1} \sum_{k=0}^{2^m-1} (\theta_{m,k}^{\tcircle{1}})(\theta_{m,k}^{\tcircle{2}})\mathbbm{1}_{[0,t]} \\
\frac{1}{2^n}\sum_{m=0}^{n-1} \sum_{k=0}^{2^m-1} (\theta_{m,k}^{\tcircle{1}})(\theta_{m,k}^{\tcircle{2}})\mathbbm{1}_{[0,t]} & \frac{1}{2^n}\sum_{m=0}^{n-1} \sum_{k=0}^{2^m-1} (\theta_{m,k}^{\tcircle{2}})^2\mathbbm{1}_{[0,t]} 
\end{pmatrix}.\]
Since $\theta_{m,k}^{\tcircle{1}},\theta_{m,k}^{\tcircle{2}} \in \{-1,1\}$ we get $\frac{1}{2^n} \sum_{m=0}^{n-1} \sum_{k=0}^{2^m-1} (\theta_{m,k}^{\tcircle{1}})^2 = \frac{2^n-1}{2^n}\xrightarrow[]{n\to \infty} 1$, similarly,\\ $\frac{1}{2^n}\sum_{m=0}^{n-1} \sum_{k=0}^{2^m-1} (\theta_{m,k}^{\tcircle{2}})^2 \xrightarrow[]{n\to \infty}1$. 
\par If we further assume $\theta_{m,k}^{\tcircle{1}},\theta_{m,k}^{\tcircle{2}}$ are independent (not just uncorrelated) with $\mathbb{E}(\theta_{m,k}^{\tcircle{1}}) = 0$, then we get:
\[\mathbb{E}\left(\frac{1}{2^n}\sum_{m=0}^{n-1} \sum_{k=0}^{2^m-1} (\theta_{m,k}^{\tcircle{1}})(\theta_{m,k}^{\tcircle{2}}) \right) =\frac{1}{2^n}\sum_{m=0}^{n-1} \sum_{k=0}^{2^m-1} \mathbb{E}\left(\theta_{m,k}^{\tcircle{1}}\right)\mathbb{E}\left(\theta_{m,k}^{\tcircle{2}}\right) =0 \quad \text{ and,} \]
\[Var\left(\frac{1}{2^n}\sum_{m=0}^{n-1} \sum_{k=0}^{2^m-1} (\theta_{m,k}^{\tcircle{1}})(\theta_{m,k}^{\tcircle{2}}) \right) =\mathbb{E}\left(\frac{1}{2^n}\sum_{m=0}^{n-1} \sum_{k=0}^{2^m-1} (\theta_{m,k}^{\tcircle{1}})(\theta_{m,k}^{\tcircle{2}}) \right)^2 \]
\[= \left(\frac{1}{2^n}\right)^2\mathbb{E} \left[ \sum_{m=0}^{n-1} \sum_{k=0}^{2^m-1} \left((\theta_{m,k}^{\tcircle{1}})(\theta_{m,k}^{\tcircle{2}}) \right)^2+\underset{(m,k)\neq(m',k')}{\sum_{m=0}^{n-1} \sum_{k=0}^{2^m-1} \sum_{m'=0}^{n-1} \sum_{k'=0}^{2^{m'}-1}} \left((\theta_{m,k}^{\tcircle{1}})(\theta_{m,k}^{\tcircle{2}}) \right)\left((\theta_{m',k'}^{\tcircle{1}})(\theta_{m',k'}^{\tcircle{2}}) \right)\right] \]
\[= \left(\frac{1}{2^n}\right)^2 \left[ \sum_{m=0}^{n-1} \sum_{k=0}^{2^m-1} 1+\underset{(m,k)\neq(m',k')}{\sum_{m=0}^{n-1} \sum_{k=0}^{2^m-1} \sum_{m'=0}^{n-1} \sum_{k'=0}^{2^{m'}-1}} \mathbb{E}\left((\theta_{m,k}^{\tcircle{1}})(\theta_{m,k}^{\tcircle{2}}) \right)\mathbb{E}\left((\theta_{m',k'}^{\tcircle{1}})(\theta_{m',k'}^{\tcircle{2}}) \right)\right] \]
\[= \left(\frac{1}{2^n}\right)^2 \left[ \sum_{m=0}^{n-1} \sum_{k=0}^{2^m-1} 1\right] =\frac{1}{2^n} - \left(\frac{1}{2^n}\right)^2\xrightarrow[]{n\to \infty} 0. \]
We can see, $Var\left(\frac{1}{2^n}\sum_{m=0}^{n-1} \sum_{k=0}^{2^m-1} (\theta_{m,k}^{\tcircle{1}})(\theta_{m,k}^{\tcircle{2}}) \right)$ has an upper-bound of $\left(\frac{1}{2^n} - \frac{1}{2^{2n}}\right)$ which is the general term of a summable series. So using Borel-Cantelli lemma we can conclude $$\left(\frac{1}{2^n}\sum_{m=0}^{n-1} \sum_{k=0}^{2^m-1} (\theta_{m,k}^{\tcircle{1}})(\theta_{m,k}^{\tcircle{2}}) \right) \to 0 \quad \text{almost surely}.$$ So $[x]_{T^n}(t) \to t\textbf{Id}_{2\times 2}$ almost surely.
\end{example}
\begin{remark}
In general, the process we described in Example \ref{higher.dimn.Schid} is a process where the quadratic variation is linear over time along dyadic partition sequence, so they have the same quadratic variation as of two-dimensional Brownian Motion. But in contrast with Brownian paths (which belong to $ C^{\frac{1}{2}-}([0,1],\mathbb{R}^2)$) this process is  $\frac{1}{2}$-H\"older continuous. 
\par If we take $\theta_{m,k}^{\tcircle{2}} = 1$ in Example \ref{higher.dimn.Schid}, then the corresponding process $x$ has different quadratic variation along Triadic partition than that of 2-dimensional Brownian motion. 
\par If we take $\theta_{m,k}^{\tcircle{2}}$ and $\theta_{m,k}^{\tcircle{2}}$ are independent and $+1$ and $-1$ both with probability $\frac{1}{2}$ in Example \ref{higher.dimn.Schid}, then the process $x$ has the same quadratic variation along any finitely refining sequence of partitions which is coarsening of dyadic partition. This is a higher-dimensional extension of the process we discussed in Section \ref{sec5}. We have skipped the proof of this argument, as it follows in the similar line of the proofs discussed in Section \ref{sec5}.
\end{remark}

\bibliographystyle{siam}
\bibliography{pathwise1} 

\end{document}